\numberwithin{equation}{section}
\theoremstyle{plain}
\newtheorem{theorem}{Theorem}[section]
\newtheorem{prop}[theorem]{Proposition}
\newtheorem{lemma}[theorem]{Lemma}
\newtheorem{corollary}[theorem]{Corollary}
\theoremstyle{definition}
\newtheorem{remark}[theorem]{Remark}
\newcommand{\Rmnum}[1]{\expandafter\@slowromancap\romannumeral #1@}
\newcommand{\mr}{\mathbb{R}}
\newcommand{\ud}{\mathrm{d}}
\newcommand{\ms}{\mathbb{S}}
\newcommand{\fint}{-\mkern -19mu\int}
\keywords{$Q$-curvature, Cohn-Vossen inequality, Gap theorems}
\subjclass{Primary: 53C18,   Secondary: 58J90.}
\address{Mingxiang Li, Department  of Mathematics \& Institue of Mathematical Sciences, Chinese University of Hong Kong, Shatin, NT, Hong Kong}
\email{mingxiangli@cuhk.edu.hk}
\address{Juncheng Wei,  Department of Mathematics, Chinese University of Hong Kong, Shatin, NT, Hong Kong}
\email{wei@math.cuhk.edu.hk}
\address{X. Xu,   School  of Mathematics, Nanjing University, China }
\email{matxuxw@nju.edu.cn}
\begin{document}
	\title{On geometry of $Q^{(2k)}_g$-curvature}
	\author{Mingxiang Li, Juncheng Wei, Xingwang Xu}
	\date{}
	\maketitle
	\begin{abstract}
		The main purpose of current article is to study the geometry of $Q$-curvature.  For simplicity, we start with a simple model: a complete and conformal  metric $g=e^{2u}|dx|^2$ on $\mathbb{R}^n$.  Assuming that the metric $g$ has non-negative $nth$-order $Q$-curvature and non-negative scalar curvature,  we show that the Ricci curvature is non-negative.  If we further assume that  the isoperimetric ratio near the end is positive,  we show that the growth rate of $kth$ elementary symmetric function $\sigma_k(g)$ of Ricci curvature over geodesic ball  of  radius $r$ is at most polynomial in $r$  with order $n-2k$ for all $1 \leq k \leq \frac{n-2}{2}$.  Similarly, we are able to show that the same growth control holds for $2kth$-order $Q$-curvature. Finally, we  show  that for $k=1$ or $2$, the gap theorems for $Q^{(2k)}_g$ hold true.
	\end{abstract}
	
	\section{Introduction}
	
	The geometry of open manifolds is one of  central topics in differential geometry, presenting numerous challenges.  A very famous one is the following Yau's conjecture:  given a complete Riemannian manifold $(M^n,g)$ of dimension at least three with non-negative Ricci curvature, the scalar curvature near the end should have suitable decay, more precisely:
	\begin{equation}\label{Yau's conjecture}
		\limsup_{r\to\infty} r^{2-n} \int_{B^g_r(p_0)} R_g dv_g < +\infty
	\end{equation}
	where $R_g$ is the scalar curvature of metric $g$ and $B^g_r(p_0)$ represents the geodesic ball centered at point $p_0$  with radius  $r$.  This conjecture can be viewed as a higher-dimensional analogue of the classical Cohn-Vossen inequality \cite{CV} which says the total Gaussian curvature of an open surface is finite if the Gaussian curvature is non-negative.  Higher dimensional geometry is much more complicated. This conjecture provides one possible way to study the higher dimensional geometry of the complete open manifolds.
	
	So far, the only progress on resolving this conjecture is made through the works of \cite{Xu gy} and \cite{Zhu} in very special case: three dimensional manifolds with one pole.  See also \cite{M-W} for the sharp and precise upper bound of  \eqref{Yau's conjecture} under the assumption that $n=3$ and the scalar curvature is bounded between two positive constants. One  can also refer to \cite{Pet} for related result under the control of sectional curvatures.
	In fact, the original conjecture of Yau \cite{Yau} is more general and more ambitious:
	\begin{equation}\label{Yau's conjecture for sigma_k}
		\limsup_{r\to\infty} r^{2k-n} \int_{B^g_r(p_0)} \sigma_k dv_g < +\infty
	\end{equation}
	where $\sigma_k$ is the $kth$ elementary symmetric function of the Ricci tensor.  
	However, for $k\geq2$, Yang \cite{Yang} has shown that the conjecture is false in general by construction of some counterexamples in the case of  $U(n)$-invariant complete K\"ahler metrics on $\mathbb{C}^n$ with non-negative bisectional curvature.
	
	The conjecture has been existed more than three decades and very limited progress has been made. To rethink of this problem, we observe that Cohn-Vossen's work \cite{CV}  can also be thought of the conformal geometry since it is two dimensional case. This motivated us to consider it from conformal geometric point of view.
	
	Recall that, for manifolds with dimension greater than or equal three, T. Branson introduced the concept of $Q$-curvature which is a natural analogue of Gaussian curvature. Let us take this point of view and try to check Yau's conjecture. Thus, we are reaching at the position to consider the conformal metric on $\mathbb{R}^n$.  Let us consider the conformal metric $g=e^{2u}|dx|^2$ on $\mathbb{R}^n$ with $n\geq 4$ an even integer, for brevity, the  metrics are assumed to be smooth. Although, in general, the definition of $Q$-curvature is very complicated, the case we consider here is much simpler: the top-order $Q$-curvature  $Q^{(n)}_g$ in our case is defined by the following  conformally  invariant equation
	\begin{equation}\label{nth order Q}
		(-\Delta)^{\frac{n}{2}}u=Q^{(n)}_ge^{nu}.
	\end{equation}
	Again due to the conformal structure, lower order $Q$-curvatures also take the simple form: for each integer $1\leq k<\frac{n}{2}$, $2kth$-order $Q$-curvature $Q^{(2k)}_g$ can be computed through the equation:
	\begin{equation}\label{Q_g^2k}
		(-\Delta)^{k}e^{\frac{n-2k}{2}u}=Q^{(2k)}_ge^{\frac{n+2k}{2}u}.
	\end{equation}
	In particular, $Q^{(2)}_g$ is exactly the scalar curvature $R_g$ multiplying $\frac{n-2}{4(n-1)}$. See \cite{LX} for more discussions on  $Q^{(2k)}_g$-curvatures on $\mr^n$.
	
	There are a lot of works on this type of geometry: for the case $n=4$, Chang, Qing, and Yang \cite{CQY} first established a sharp upper bound for the $4th$-order $Q$-curvature integral under the assumptions that $Q^{(4)}_ge^{4u}\in L^1(\mathbb{R}^4)$ and the scalar curvature is non-negative outside a compact set. Subsequently, for dimensions $n\geq 4$, Fang \cite{Fa} and Nidaye-Xiao \cite{NX} obtained analogous results under similar assumptions, namely $Q^{(n)}_ge^{nu}\in L^1(\mathbb{R}^n)$ and $R_g \geq 0$ outside a compact set.
	
	Our primary objective is to demonstrate that the $Q$-curvature integral can be controlled from above under appropriate geometric conditions, analogous to the Cohn-Vossen inequality. In this context, the assumption $Q^{(n)}_ge^{nu}\in L^1(\mathbb{R}^n)$ appears somewhat too restrictive. 
	
	Let us take step back: recall, for the two-dimensional open surfaces, Huber \cite{Hu} extended the Cohn-Vossen inequality under the weaker assumption that only the negative part of the Gaussian curvature is integrable. In dimension four, Chang, Hang, and Yang \cite{CHY} made a significant improvement over the result of Chang-Qing-Yang \cite{CQY,CQY Inv} by requiring the weaker condition: the negative part of $Q^{(4) }_g$ is integrable over the manifold, in other words, $(Q^{(4)}_g)^-e^{4u}\in L^1(\mathbb{R}^4)$, where we define $\varphi^-:=\max\{-\varphi, 0\}$, under  extra assumptions 
	\begin{equation}\label{additional assmptions}
		|Ric_g|+|\nabla R_g|\leq C, \quad R_g\geq C>0
	\end{equation}
(see Thorem 1.5 in \cite{CHY}).  This condition allows for the application of the Cheng-Yau estimate. For general conformally  compact manifolds, very recently, Ma and Qing (See \cite{MQ adv}, \cite{MQ APDE}) have taken some important steps for understanding $4th$-order $Q$-curvature by employing potential theory. In particular, for $n=4$, they relax the additional assumptions \eqref{additional assmptions} to $R_g\geq 0$ outside a compact set  (see Theorem 1.5 in \cite{MQ APDE}).
	
As a first step toward understanding our problem, we aim to extend the $Q$-curvature integral control established in dimension four by \cite{CHY, MQ APDE} to arbitrary even dimensions $n$. However, the methods presented in these references do not readily extend to higher order $Q$-curvature, even in the conformally flat case. This necessitates a new technique that remains effective irrespective of the dimension. Our first principal result is the following.
	
	\begin{theorem}\label{thm: int-Q^n-between 0,1}
		Let $g=e^{2u}|dx|^2$ be a complete and conformal metric on $\mr^n$  with $n\geq 4$ an even integer.  Suppose that its scalar curvature $R_g$ is non-negative outside a compact set and the negative part of $nth$-order $Q^{(n)}_g$-curvature is integrable over $(\mr^n,g)$. Then, the total $Q^{(n)}_g$-curvature is controlled precisely by 
		\begin{equation}\label{CVineqaulity for Q}
			0\leq \int_{\mr^n}Q^{(n)}_g\ud \mu_g\leq \frac{(n-1)!|\mathbb{S}^n|}{2}
		\end{equation}
		where $|\mathbb{S}^n|$ denotes the volume of standard n-sphere and $\ud\mu_g=e^{nu}\ud x$.
	\end{theorem}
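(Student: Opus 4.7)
My plan is to convert the integral bound into an asymptotic statement about the conformal factor $u$ at infinity, via a logarithmic-potential decomposition of the type used in \cite{CQY, CHY, Fa, NX}. Define
\[
v(x):=\frac{1}{c_n}\int_{\mr^n}\log\frac{|y|}{|x-y|}\,Q^{(n)}_g(y)e^{nu(y)}\,\ud y,\qquad c_n=\frac{(n-1)!|\ms^n|}{2},
\]
which solves $(-\Delta)^{n/2}v=Q^{(n)}_g e^{nu}$ distributionally, since $\frac{1}{c_n}\log\frac{1}{|x|}$ is the fundamental solution of $(-\Delta)^{n/2}$ on $\mr^n$ for even $n$. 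At first sight the integral is not obviously finite, because only the negative part of $Q^{(n)}_ge^{nu}$ is assumed integrable. I would split $Q^{(n)}_g=(Q^{(n)}_g)^+-(Q^{(n)}_g)^-$ and treat the two potentials separately; the contribution from $(Q^{(n)}_g)^-$ is bounded above by $C\log(1+|x|)$, while the contribution from $(Q^{(n)}_g)^+$ may be $+\infty$ at this stage and is precisely the quantity we must ultimately bound.

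Next, set $h:=u-v$. Then $(-\Delta)^{n/2}h=0$ in the distributional sense, and elliptic regularity makes $h$ real-analytic. The heart of the argument is a growth estimate
\[
h(x)\leq C\log(1+|x|)+C,
\]
which rules out all polyharmonic polynomial asymptotics of positive degree. In dimension four this is the core lemma of \cite{CHY}; in general even dimension the polyharmonic hierarchy is longer, containing polynomials up to degree $n-1$, so a new argument is needed. I would iteratively kill each polynomial order by combining three ingredients: the spherical-mean expansion of polyharmonic functions, whose coefficients are determined by the values of $\Delta^{j}h$ for $0\leq j<n/2$; the scalar-curvature sign condition, which upon conformal change gives the pointwise inequality $-2(n-1)\Delta u\geq(n-1)(n-2)|\nabla u|^2$ near infinity and forbids concave parabolic asymptotics of $u$; and the completeness of $g$, which caps the decay rate of the spherical average $\bar u$.

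Once the growth bound on $h$ is in place, the standard asymptotic $v(x)=-\alpha\log|x|+o(\log|x|)$ with $\alpha=\frac{1}{c_n}\int_{\mr^n} Q^{(n)}_g e^{nu}\ud x$ yields $\bar u(r)=-\alpha\log r+O(\log r)$. Completeness of $g=e^{2u}|dx|^2$, which along the radial direction requires $\int^\infty e^{\bar u(r)}\ud r=\infty$, then forces $\alpha\leq 1$, giving the upper bound $\int Q^{(n)}_g\ud\mu_g\leq c_n$. For the lower bound $\alpha\geq 0$, a finer analysis of the spherical average of $R_g$ near infinity using $R_g\geq 0$ there rules out $\alpha<0$: otherwise $\bar u$ would grow at infinity fast enough to contradict, on average, the integrated scalar-curvature sign condition.

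The main obstacle will be the logarithmic growth bound on $h$ in arbitrary even $n$. The dimension-four argument of \cite{CHY} effectively deals with biharmonic polynomials of degree $\leq 3$, handled by a single spherical-mean computation, whereas here every additional two dimensions introduces a new layer of polyharmonic polynomials that must be excluded, and the iteration must be arranged so that the sign information from $R_g$ and from completeness propagates through all orders simultaneously. Designing this iteration is precisely the dimension-free technique promised in the introduction, and it is the step I expect to require the most care.
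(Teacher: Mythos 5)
Your high-level plan---decompose $u$ into a logarithmic potential plus a polyharmonic remainder and then pin down the remainder's growth---is the right family of ideas, but there are three concrete gaps that the paper's proof is carefully structured to avoid.

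First, you cannot define $v(x)=\frac{1}{c_n}\int_{\mr^n}\log\frac{|y|}{|x-y|}\,Q^{(n)}_ge^{nu}\,\ud y$ at the outset, because $(Q^{(n)}_g)^+e^{nu}$ is not known to be integrable --- that is the very content of the upper bound. You acknowledge that this potential ``may be $+\infty$ at this stage,'' but then proceed as if $v$ and $h=u-v$ were well-defined functions. The paper circumvents this by truncating: it sets $u_2=\mathcal{L}(\eta_{r_1}(Q^{(n)}_g)^+e^{nu})$ with a compactly supported cutoff $\eta_{r_1}$, runs the whole argument for fixed $r_1$, and only at the end lets $r_1\to\infty$. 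This cutoff is not cosmetic; it is what makes every potential finite throughout.

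Second, the growth bound on the polyharmonic remainder, which you rightly identify as ``the heart of the argument,'' is not actually obtained by ``iteratively killing polynomial orders'' with scalar-curvature sign information and completeness fed in at each order. The difficulty is that the scalar-curvature inequality $-2(n-1)\Delta u\geq(n-1)(n-2)|\nabla u|^2$ constrains $u=v+h$, not $h$ alone, and extracting order-by-order information on $h$ from it is not straightforward. What the paper does instead is build an auxiliary function $u_3=u+u_1-u_2$ for which the \emph{sign} condition $(-\Delta)^{n/2}u_3=(1-\eta_{r_1})(Q^{(n)}_g)^+e^{nu}\geq 0$ holds (note: $\geq 0$, not $=0$ as for your $h$). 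Then the Wei--Xu super-polyharmonic lemma (Lemma~\ref{lem:WX trick}) --- combined with a logarithmic bound on $\fint_{B_r}|u_3|$ coming from Propositions \ref{prop:baru-upper-bound} and \ref{prop:bar u lower bound} --- propagates the sign downwards to give $-\Delta u_3\geq 0$. The mean-value inequality for superharmonic functions then immediately yields $\bar u_3(r)\leq u_3(0)$, a \emph{constant} bound, which is much stronger than $C\log r$ and is what closes the upper estimate (after sending $r_1\to\infty$). Your plan does not produce a sign on any Laplacian power of $h$, so the super-polyharmonic machinery does not apply to $h$.

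Third, the endgame. You claim that completeness ``along the radial direction requires $\int^\infty e^{\bar u(r)}\,\ud r=\infty$,'' but completeness constrains $\int^\infty e^{u(r\theta)}\,\ud r$ for each direction $\theta$, not the average, and by Jensen the inequality between $e^{\bar u}$ and $\overline{e^u}$ runs the wrong way for your purposes. The rigorous version of ``completeness forces $\alpha\leq 1$'' is the Chang--Qing--Yang isoperimetric deficit theorem (Theorem~\ref{thm:CQY-thm}), which the paper invokes after first establishing the weaker bound $\alpha_0\leq 2$ and normality. (The lower bound $\alpha_0\geq 0$ then comes from normality together with the upper bound $\overline{u^+}(r)\leq C$ of Proposition~\ref{prop:baru-upper-bound}, which again uses the scalar-curvature sign in an integrated, not pointwise, way.) In short, your proposal identifies the correct landmarks --- logarithmic potentials, a polyharmonic remainder, scalar curvature, completeness --- but the bridges between them (the cutoff, the Wei--Xu sign propagation applied to a signed auxiliary function, and the CQY isoperimetric theorem) are missing, and these are precisely the places where the argument is genuinely delicate.
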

	
	In their work \cite{Guan-V-W}, Guan, Viaclovsky, and Wang have observed the non-negativity of Ricci curvature if the $k$-th elementary symmetric function of the Schouten tensor is non-negative for all $1 \leq k \leq n/2$, leveraging the algebraic structure of the elementary symmetric functions. This foundational result naturally motivates the investigation of Ricci curvature control through the lens of $Q$-curvature positivity, as explored in \cite{LX}. In the present paper, we demonstrate that the non-negativity of both $nth$-order $Q$-curvature and scalar curvature suffices to guarantee the non-negativity of Ricci curvature.
	\begin{theorem}\label{thm: positive Ricci}
		Let $g=e^{2u}|dx|^2$ be a complete and conformal metric on $\mr^n$  where $n\geq 4$ is an even integer.  Suppose  the  $nth$-order $Q^{(n)}_g$-curvature  and scalar curvature are non-negative. Then, the Ricci curvature $Ric_g$ is non-negative.
	\end{theorem}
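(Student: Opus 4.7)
My plan is to reduce Ricci non-negativity to a single pointwise inequality, establish an explicit integral representation for $u$, and close the inequality by a vector-valued Cauchy--Schwarz using the sharp bound from Theorem \ref{thm: int-Q^n-between 0,1}.

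For $g=e^{2u}|dx|^2$ the Ricci tensor expands as
\[
Ric_g\;=\;(n-2)\bigl(-\nabla^2 u+du\otimes du\bigr)-\bigl(\Delta u+(n-2)|\nabla u|^2\bigr)|dx|^2,
\]
and a direct manipulation shows that $Ric_g(v,v)\geq 0$ for every Euclidean unit vector $v$ is equivalent to the pointwise inequality
\[
\frac{-\Delta u}{n-2}-\nabla^2 u(v,v)\;\geq\;|\nabla u|^2-(\nabla u\cdot v)^2.
\]

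My second step is to promote the hypotheses to the super-polyharmonic hierarchy $(-\Delta)^k u\geq 0$ on $\mr^n$ for all $1\leq k\leq n/2$: the case $k=1$ is immediate from $R_g\geq 0$, which gives $-\Delta u\geq\tfrac{n-2}{2}|\nabla u|^2\geq 0$; the case $k=n/2$ is $(-\Delta)^{n/2}u=Q^{(n)}_g e^{nu}\geq 0$; the intermediate cases follow from iterated Newtonian potentials combined with completeness of $g$, in the spirit of \cite{LX}. Together with the bound $\int Q^{(n)}_g\,\ud\mu_g\leq c_n:=\tfrac{(n-1)!|\mathbb{S}^n|}{2}$ of Theorem \ref{thm: int-Q^n-between 0,1}, one obtains the clean integral representation
\[
u(x)\;=\;c_0+\frac{1}{c_n}\int_{\mr^n}\log\frac{1}{|x-y|}\,f(y)\,\ud y,\qquad f:=Q^{(n)}_g e^{nu}\geq 0,
\]
with the crucial size constraint $\alpha:=\tfrac{1}{c_n}\int_{\mr^n}f\,\ud y\in[0,1]$.

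Writing $\rho_y:=|x-y|$ and $\hat r_y:=(x-y)/\rho_y$, differentiation under the integral gives $\nabla u=-\tfrac{1}{c_n}\int\rho_y^{-1}\hat r_y\,f\,\ud y$ and $-\nabla^2 u=\tfrac{1}{c_n}\int\rho_y^{-2}(I-2\hat r_y\hat r_y^T)\,f\,\ud y$, whence $-\Delta u=(n-2)K$ for $K:=\tfrac{1}{c_n}\int\rho_y^{-2}f\,\ud y$, and setting $J(v):=\tfrac{1}{c_n}\int\rho_y^{-2}(\hat r_y\cdot v)^2 f\,\ud y$ the reduced Ricci inequality becomes $2(K-J(v))\geq |\nabla u|^2-(\nabla u\cdot v)^2$. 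The decisive input is the fiberwise vector-valued Cauchy--Schwarz estimate
\[
|\nabla u|^2-(\nabla u\cdot v)^2\;=\;\tfrac{1}{c_n^2}\Bigl|\int\rho_y^{-1}\bigl(\hat r_y-(\hat r_y\cdot v)v\bigr)f\,\ud y\Bigr|^2\;\leq\;\alpha\,(K-J(v)),
\]
which, since $\alpha\leq 1\leq 2$, is at most $2(K-J(v))$, closing the argument.

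The main obstacle I foresee is justifying the clean representation above, i.e.\ ruling out any polyharmonic polynomial correction of degree $\leq n-1$ that is a priori permitted by $(-\Delta)^{n/2}u=f$. This is where completeness of $g$ (and not merely $f\in L^1$) enters decisively, through a Liouville-type rigidity argument that combines the super-polyharmonic hierarchy with the metric-growth constraint to force the polynomial part to be constant. For $n=4$ this is essentially contained in \cite{CHY}; for general even $n\geq 6$ the hierarchy allows polyharmonic polynomials of higher degree, and carrying out this rigidity step is the technical heart of the proof.
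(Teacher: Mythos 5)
Your reduction to the pointwise inequality $2(K-J(v))\geq |\nabla u|^2-(\nabla u\cdot v)^2$, the integral formulas for $\nabla u$ and $\nabla^2 u$ from the logarithmic potential, and the vector-valued Cauchy--Schwarz step all coincide with the paper's proof of Theorem~\ref{thm:positive Ricci for normal metric}; the only cosmetic difference is that you invoke $\alpha_0\leq 1$ from Theorem~\ref{thm: int-Q^n-between 0,1}, whereas Theorem~\ref{thm:positive Ricci for normal metric} only needs $\alpha_0\leq 2$ to get the factor $1-\alpha_0/2\geq 0$.

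The genuine gap is exactly the one you flag yourself and then leave open: the representation $u=c_0+\mathcal{L}(Q^{(n)}_g e^{nu})$ must be established, and this is where completeness and the sign of $R_g$ do their real work. The paper closes it in Lemma~\ref{lem: normal metric}. Setting $a:=u-\mathcal{L}(Q^{(n)}_g e^{nu})$ gives $(-\Delta)^{n/2}a=0$. Three growth estimates are then combined: $\overline{u^+}(r)\leq C$ from the scalar-curvature equation via the divergence theorem and H\"older (Proposition~\ref{prop:baru-upper-bound}); $u(x)\geq -2\log(|x|+1)-C$ from completeness and $R_g\geq -C_0$ via the Chang--Hang--Yang lemma (Proposition~\ref{prop:bar u lower bound}); and $\fint_{B_r(0)}|\mathcal{L}(Q^{(n)}_g e^{nu})|\ud x=O(\log r)$ from Lemma~\ref{lem:barLf}. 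Together these give $\fint_{B_r(0)}|a|\ud x=O(\log r)=o(r)$, so Corollary~\ref{cor:Liouville} (the slow-growth Liouville theorem for polyharmonic functions, via the Wei--Xu trick of Lemma~\ref{lem:WX trick}) forces $a$ to be constant. Note that the super-polyharmonic hierarchy $(-\Delta)^k u\geq 0$ you allude to is what the paper deploys (on a modified function $u_3$) to obtain the upper bound on $\int Q^{(n)}_g\ud\mu_g$ in Theorem~\ref{thm: int-Q^n-between 0,1}; it is not what produces the normal-metric representation, which instead comes from the Liouville rigidity for the polyharmonic remainder $a$. A final small point: your kernel $\log\frac{1}{|x-y|}$ does not give a convergent integral for general $f\in L^1(\mr^n)$; the paper's normalization $\log\frac{|y|}{|x-y|}$ does, and since the two agree after any differentiation in $x$, your formulas for $\nabla u$ and $\nabla^2 u$ are unaffected.
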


	Building on Theorem \ref{thm: positive Ricci}, we observe that the positivity condition on $Q$-curvatures imposes stronger constraints compared with Ricci curvature. This motivates our reconsideration of Yau’s conjecture \eqref{Yau's conjecture for sigma_k}. However, due to technical limitation, we must additionally assume that the isoperimetric ratio $I_g$ is positive. 
	Here, the isoperimetric ratio $I_g$ is defined as
	\begin{equation}\label{def:isoperimetric_ratio}
		I_g := \inf_{\Omega \subset \mathbb{R}^n} \frac{\operatorname{Vol}_g(\partial \Omega)^{\frac{n}{n-1}}}{\operatorname{Vol}_g(\Omega)},
	\end{equation}
	where $\Omega$ ranges over all smooth bounded domains in $\mathbb{R}^n$.
	\begin{theorem}\label{thm:Yau's conjecture}
		Under the same assumptions as in Theorem \ref{thm: positive Ricci}, if  we further assume that  the  isoperimetric ratio $I_g$ is positive, 
		then, for any integer $1\leq k\leq \frac{n-2}{2}$, Yau's conjecture \eqref{Yau's conjecture for sigma_k} holds.
	\end{theorem}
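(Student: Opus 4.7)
The plan is to feed the Ricci nonnegativity from Theorem \ref{thm: positive Ricci} into the Bishop--Gromov machinery, combine it with the isoperimetric hypothesis to extract Euclidean-type two-sided volume growth, reduce the $\sigma_k$ estimate to one on $R_g^k$ via Newton--Maclaurin, and finally control $\int R_g^k\,\ud\mu_g$ through an integration-by-parts argument driven by the conformal $Q^{(2k)}$-equation.

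First I would invoke Theorem \ref{thm: positive Ricci} to obtain $\mathrm{Ric}_g\geq 0$ on all of $(\mathbb{R}^n,g)$. Bishop--Gromov then yields the upper bound $\mathrm{Vol}_g(B^g_r(p_0))\leq \omega_n r^n$, the monotonicity of $r^{-n}\mathrm{Vol}_g(B^g_r(p_0))$, and the Laplacian comparison $\Delta_g d_g\leq (n-1)/d_g$ away from $p_0$. Plugging $\Omega=B^g_r(p_0)$ into the definition of $I_g$ and combining with this monotonicity, $I_g>0$ forces the asymptotic volume ratio to be strictly positive, so that
\begin{equation*}
c_1 r^n\leq \mathrm{Vol}_g(B^g_r(p_0))\leq C_1 r^n,\qquad \mathrm{Vol}_g(\partial B^g_r(p_0))\leq C_2 r^{n-1}.
\end{equation*}

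Next, since $\mathrm{Ric}_g$ is a nonnegative symmetric tensor, Newton--Maclaurin gives
\begin{equation*}
\sigma_k(\mathrm{Ric}_g)\leq c_{n,k}\,R_g^{\,k},
\end{equation*}
reducing Theorem \ref{thm:Yau's conjecture} to the task of proving $\int_{B^g_r(p_0)} R_g^{\,k}\,\ud\mu_g\leq C r^{n-2k}$ for every $1\leq k\leq (n-2)/2$. For the base case $k=1$, I would set $v:=e^{(n-2)u/2}$; the conformal identity $Q^{(2)}_g=\frac{n-2}{4(n-1)}R_g$ translates into $-\Delta v=\frac{n-2}{4(n-1)}R_g v^{(n+2)/(n-2)}\geq 0$, so that $v$ is a positive Euclidean-superharmonic function. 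Testing this equation against $\eta_r v$, where $\eta_r$ is a Lipschitz cutoff of $B^g_r(p_0)$ supported in $B^g_{2r}(p_0)$ and chosen as a function of $d_g$, and then integrating by parts together with Cauchy--Schwarz and the Laplacian comparison to absorb the Dirichlet-type energy terms, one arrives at
\begin{equation*}
\int_{B^g_r(p_0)} R_g\,\ud\mu_g\leq Cr^{n-2}.
\end{equation*}
For general $1<k\leq (n-2)/2$ I would iterate the scheme on the higher-order equation $(-\Delta)^k e^{(n-2k)u/2}=Q^{(2k)}_g e^{(n+2k)u/2}$, performing $k$ successive integrations by parts with $d_g$-cutoffs against $\eta_r$-weighted powers of $v_k:=e^{(n-2k)u/2}$.

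The main obstacle lies precisely in making this integration-by-parts step work: the cutoff $\eta_r$ is a function of the geodesic distance $d_g$, while the conformal $Q^{(2k)}$-equation is formulated with the Euclidean Laplacian, so at order $k$ one has to control $k$-fold Euclidean derivatives of $\eta_r$ by $r^{-2k}$, but only in the averaged sense provided by integration over the annulus $B^g_{2r}(p_0)\setminus B^g_r(p_0)$. This averaging is enabled by iterated use of the Laplacian comparison for $d_g$, together with the two-sided volume growth from the isoperimetric hypothesis, which force every boundary and cutoff integral into the target rate $r^{n-2k}$.
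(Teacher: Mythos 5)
Your route diverges from the paper's after the shared first step ($\mathrm{Ric}_g\geq 0$ from Theorem \ref{thm: positive Ricci}): you try to replicate the classical Riemannian toolkit (Bishop--Gromov, Laplacian comparison, Newton--Maclaurin, integration by parts against geodesic cutoffs), whereas the paper leans entirely on the conformal structure: the normal-metric representation $u = \mathcal{L}(Q^{(n)}_ge^{nu}) + C$, the spherical-average potential estimates of Lemma \ref{lem:equ nabla^k Lf}, the bound $\sigma_k(g)\leq C|\mathrm{Ric}_g|_g^k\leq C(|\nabla^2u|^k+|\nabla u|^{2k})e^{-2ku}$ from the conformal Ricci formula, and the strong $A_\infty$ weight machinery (Theorem \ref{thm:Wang's theorem}, Lemma \ref{lem:strong Ainfty for bar u}) to pass from Euclidean to geodesic balls. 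Your approach does not use the logarithmic potential at all.

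The integration-by-parts step, which you yourself flag as the main obstacle, is a genuine gap and I do not see how to close it with the ingredients you list. Already for $k=1$, writing $v=e^{(n-2)u/2}$ and testing $-\Delta v = \frac{n-2}{4(n-1)}R_g v^{(n+2)/(n-2)}$ against $\eta^2 v$ gives
\begin{equation*}
\int \eta^2(-\Delta v)\,v\,\ud x = -\tfrac{1}{2}\int v^2\,\Delta(\eta^2)\,\ud x + \int \eta^2|\nabla v|^2\,\ud x,
\end{equation*}
so the quantity you must bound above is bounded \emph{below} (not above) by the cutoff term; to conclude you would need an independent a priori bound on the Dirichlet energy $\int\eta^2|\nabla v|^2\,\ud x$. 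The Caccioppoli inequality only returns $\int\eta^2|\nabla v|^2 \lesssim \int\eta^2(-\Delta v)v + \int v^2|\nabla\eta|^2$, which is circular. Neither Bishop--Gromov volume bounds nor the isoperimetric hypothesis supply that missing energy estimate, and the Laplacian comparison $\Delta_g d_g\leq(n-1)/d_g$ controls the \emph{metric} Laplacian of $d_g$, whereas your integration by parts requires Euclidean derivatives (up to order $2k$) of a cutoff built from $d_g$; there is no clean transfer between the two. Note also that Yau's conjecture for $k=1$ is open for general $\mathrm{Ric}_g\geq 0$, so an integration-by-parts proof not exploiting the explicit potential representation would be surprising. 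The paper circumvents all of this by reading off pointwise derivative bounds on large spheres from the representation $u=\mathcal{L}(Q^{(n)}_ge^{nu})+C$, obtaining directly $\fint_{\partial B_s(0)}(|\nabla^2u|^k+|\nabla u|^{2k})e^{(n-2k)u}\ud\sigma\leq Cs^{-2k}e^{(n-2k)\bar u(s)}$, and then using the strong $A_\infty$ property (here is where $I_g>0$ enters, through $\alpha_0<1$ and Theorem \ref{thm:Wang's theorem}) to convert the resulting Euclidean-ball bound into a geodesic-ball bound.
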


	Interestingly, Yau's conjecture  \eqref{Yau's conjecture for sigma_k} leads us to consider the growth behavior of lower-order $Q$-curvature integrals.
	\begin{theorem}\label{thm:Q^2k_g growth_on_geodesic_ball}
		Under the same assumptions as in Theorem \ref{thm: int-Q^n-between 0,1}, if we also assume that  the  isoperimetric ratio $I_g$ is positive, then, for any fixed point $p_0$ and any integer $1\leq k\leq \frac{n-2}{2}$,  there holds
		\begin{equation}\label{Q2k integeral growth}
			\limsup_{r\to\infty}r^{2k-n}\int_{B_r^g(p_0)}|Q_g^{(2k)}|\ud\mu_g<+\infty
		\end{equation}
		where $B^g_r(p_0)$ represents the geodesic ball centered at point $p_0$  with radius  $r$.
	\end{theorem}
	\begin{remark}
		We conjecture that the condition $I_g>0$ is unnecessary.
	\end{remark}
	
It is natural to ask whether the integral growth rate in \eqref{Q2k integeral growth} is optimal.
Inspired by various gap theorems—such as those in \cite{Chen-Zhu, GPZ, Greene-Wu, MSY, Ni}, among others—we prove the following gap results for the $Q^{(2k)}_g$-curvature.
Due to technical limitations, our results are currently restricted to the cases $k = 1, 2$. We conjecture that such gap theorems hold for all integers $k$ satisfying $1 \leq k \leq \frac{n-2}{2}$.

	\begin{theorem}\label{thm:Q^2k_g gap theorem}
		Let $g=e^{2u}|dx|^2$  be a complete and conformal metric on $\mr^n$ where $n\geq 4$ is an even integer.  Suppose both the scalar curvature $R_g$ and $nth$-order $Q$-curvature  are non-negative as well as the isoperimetric ratio $I_g$ is positive. If, for some given point $p_0$, an integer $1\leq k\leq\min\{ \frac{n-2}{2},2\}$ and some small constant $\epsilon>0$ such that 
		$$\limsup_{r\to\infty}r^{2k-n+\epsilon}\int_{B_r^g(p_0)}|Q_g^{(2k)}|\ud\mu_g<+\infty,$$
		then $u$ must be a constant.
	\end{theorem}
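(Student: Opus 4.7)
The plan is to use the gap hypothesis to force the normalised top-order $Q$-curvature mass
\begin{equation*}
\alpha := \frac{2}{(n-1)!|\ms^n|}\int_{\mr^n}Q^{(n)}_g\,\ud\mu_g \in [0,1]
\end{equation*}
(which belongs to $[0,1]$ by Theorem~\ref{thm: int-Q^n-between 0,1}) to vanish, and then to promote the resulting polyharmonic equation on $u$, together with $R_g\ge 0$, to full rigidity. First I would establish the asymptotic
\begin{equation*}
\bar u(r) := \fint_{\{|x|=r\}} u\,\ud S = -\alpha\log r + O(1) \qquad (r\to\infty)
\end{equation*}
via the integral-representation techniques used in the proofs of Theorems~\ref{thm:Yau's conjecture} and \ref{thm:Q^2k_g growth_on_geodesic_ball} and in \cite{CQY,Fa,NX,LX}, together with compatible integral bounds on $|\nabla u|$, $|\Delta u|$ and higher derivatives on large Euclidean annuli.

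Substituting the model profile $u(x)\sim -\alpha\log|x|$ into \eqref{Q_g^2k} and using the identity $\Delta^k|x|^\beta=\prod_{j=0}^{k-1}(\beta-2j)(\beta+n-2-2j)|x|^{\beta-2k}$ with $\beta=-(n-2k)\alpha/2$, a direct computation gives, to leading order,
\begin{equation*}
Q^{(2k)}_g(x) \sim P_k(\alpha)\,|x|^{2k\alpha-2k}, \qquad \ud\mu_g \sim |x|^{-n\alpha}\,\ud x,
\end{equation*}
and hence
\begin{equation*}
\int_{B^g_r(p_0)}|Q^{(2k)}_g|\,\ud\mu_g \;\sim\; c_{n,k}\,|P_k(\alpha)|\,r^{n-2k}.
\end{equation*}
For $k=1$ one finds $P_1(\alpha)$ is a positive multiple of $\alpha(2-\alpha)$, and for $k=2$ the factorisation shows $P_2(\alpha)$ has no root in $(0,1]$ (when $n\ge 6$). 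Consequently the gap hypothesis $\int|Q^{(2k)}_g|\,\ud\mu_g=O(r^{n-2k-\epsilon})$ forces $P_k(\alpha)=0$, so $\alpha=0$. Combined with $Q^{(n)}_g\ge 0$ this gives $Q^{(n)}_g\equiv 0$, and \eqref{nth order Q} reduces to the polyharmonic equation $(-\Delta)^{n/2}u=0$ with $\bar u(r)=O(1)$.

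The last step promotes this to $u\equiv\text{const}$. For $k=1$ I would set $v:=e^{(n-2)u/2}>0$; then $-\Delta v = Q^{(2)}_g v^{(n+2)/(n-2)}\ge 0$, so $v$ is positive and superharmonic on $\mr^n$. Writing the Riesz decomposition $v=c+G*\mu$ with $\mu:=-\Delta v\ge 0$ and using that $\alpha=0$ makes geodesic and Euclidean distances comparable, the gap translates into $\int_{|x|\le R}\mu\,\ud x = O(R^{n-2-\epsilon})$; combining this with the polyharmonic identity $(-\Delta)^{n/2-1}\mu=0$ inherited from $u$ forces $\mu\equiv 0$, so $v$ is a positive harmonic function with bounded spherical averages, and Liouville gives $v\equiv$ const, hence $u\equiv$ const. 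For $k=2$ one sets $w:=e^{(n-4)u/2}$ satisfying $\Delta^2 w=Q^{(4)}_g w^{(n+4)/(n-4)}$ and runs the analogous Riesz-potential argument, replacing the harmonic Liouville theorem at the end by a biharmonic one and supplementing it with the sign constraint $R_g\ge 0$ to rule out the $|x|^2$-type biharmonic corrections.

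The principal obstacle is this rigidity step: the biharmonic Liouville theorem admits many positive non-constant solutions, so for $k=2$ one must simultaneously exploit $R_g\ge 0$ to rule out $|x|^2$-type corrections rather than relying on the gap alone. The absence of a clean analogue for $k\ge 3$---where no single auxiliary function has its polyharmonicity dictated in a sign-preserving way by a lower-order curvature equation---is exactly why the theorem must be restricted to $k\in\{1,2\}$.
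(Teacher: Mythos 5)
Your high-level strategy — forcing the normalized total curvature $\alpha_0$ to vanish, then concluding rigidity — is the same as the paper's, and the heuristic you computed (substituting the model profile $u\sim-\alpha\log|x|$ to find that $\int_{B_r^g}|Q^{(2k)}_g|\,\ud\mu_g$ grows like $r^{n-2k}$ whenever $\alpha_0\in(0,1]$) correctly identifies the mechanism. However, there are genuine gaps between the sketch and a proof.

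First, the substitution of the model profile must be replaced by a rigorous statement. The paper does this by proving exact asymptotics for spherical averages (Lemma \ref{lem:r^2-Deltau}, Lemma \ref{lem:r^2abla-u^2}, Lemma \ref{lem:r^4Delta^2u}, Lemma \ref{lem:r^4(Deltau)^2}), which combine with the pointwise positivity inequality established in Theorem \ref{thm:positive Q_4} to yield, for $k=1$, $r^2\fint_{\partial B_r}R_g e^{2u}\,\ud\sigma\to(n-1)(n-2)\alpha_0(2-\alpha_0)$ (Lemma \ref{lem:R_g limit on sphere}), and for $k=2$, the lower bound of Lemma \ref{lem:fint Q^4 lower limit}. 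In particular, for $k=2$ the argument only controls a \emph{lower} bound, not the exact constant $P_2(\alpha)$ that you computed from the pure power profile, precisely because the actual $Q^{(4)}_g$ involves error terms ($b$ and $h$ in the proof of Theorem \ref{thm:positive Q_4}) that need the positivity of $Q^{(n)}_g$ to handle; the heuristic does not see this. Second, the positivity $Q^{(2k)}_g\geq 0$ from Theorem \ref{thm:positive Q_4} is essential: the integral over a geodesic ball is bounded below by integrating over the inscribed Euclidean ball $B_{|z_r|}(0)$, and this only works because the integrand is non-negative. You use $|Q^{(2k)}_g|$ throughout but never invoke this, so the inscribed-ball step is unjustified as written. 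Third, the passage from Euclidean to geodesic scales must be made explicit: the paper uses the distance comparison identity (Theorem \ref{thm:length identity}), $\lim_{|x|\to\infty}\log d_g(x,0)/\log|x|=1-\alpha_0$, to convert $|z_r|^{(n-2k)(1-\alpha_0)}$ into $r^{n-2k-\epsilon/2}$, and the freedom to choose $\delta$ in that limit is exactly what absorbs the $\epsilon$ slack. Your argument implicitly assumes this comparison but never establishes it.

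Finally, the closing Riesz-decomposition and biharmonic-Liouville argument is unnecessary and, in the form sketched, not correct: the relation $(-\Delta)^{n/2-1}\mu=0$ you claim for $\mu=-\Delta(e^{(n-2)u/2})$ does not follow from $(-\Delta)^{n/2}u=0$, since $e^{(n-2)u/2}$ is not a linear function of $u$. Happily, no such argument is needed. Once $\alpha_0=0$, since $Q^{(n)}_g\geq 0$ with zero integral one has $Q^{(n)}_g\equiv 0$, and since $g$ is a normal metric (Lemma \ref{lem: normal metric}), the representation $u=\mathcal{L}(Q^{(n)}_ge^{nu})+C$ immediately forces $u\equiv C$. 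The $I_g>0$ hypothesis is used, as in the paper, to rule out $\alpha_0=1$ via Corollary \ref{cor:I_g-positive}, so that only the open range $\alpha_0\in(0,1)$ needs to be excluded by the growth argument; you should also make this explicit.
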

	
	If we give a more restrictive growth assumption, we are able to including the case $I_g=0.$
	
	\begin{theorem}\label{thm:Q^2k_g gap theoremfor logr}
		Let $g=e^{2u}|dx|^2$  be a complete and conformal metric on $\mr^n$ where $n\geq 4$ is an even integer.  Suppose both the scalar curvature $R_g$ and $nth$-order $Q$-curvature  are non-negative. If, for some given point $p_0$, an integer $1\leq k\leq\min\{ \frac{n-2}{2},2\}$ such that 
		$$\limsup_{r\to\infty}(\log r)^{-1}\int_{B_r^g(p_0)}|Q_g^{(2k)}|\ud\mu_g<+\infty,$$
		then $u$ must be a constant.
	\end{theorem}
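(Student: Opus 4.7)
The plan is to combine an asymptotic expansion of $u$ at infinity with an integration-by-parts identity on Euclidean balls, so that any nonzero contribution from the total $Q^{(n)}$-curvature forces polynomial growth of the $Q^{(2k)}$-integral and thus contradicts the logarithmic hypothesis. First, by Theorem \ref{thm: positive Ricci} one has $\mathrm{Ric}_g \geq 0$, and by Theorem \ref{thm: int-Q^n-between 0,1} the total $Q^{(n)}_g$-curvature is finite. A potential-theoretic analysis of equation \eqref{nth order Q}---in the spirit of Lin, Chang--Qing--Yang, Fang, and Ndiaye--Xiao---combined with $R_g \geq 0$ to rule out the polyharmonic polynomial part, should yield the asymptotic
\[
u(x) = -\alpha \log|x| + C + o(1), \qquad |x| \to \infty,
\]
where $\alpha = \frac{2}{(n-1)!|\mathbb{S}^n|}\int_{\mr^n}Q^{(n)}_g\,d\mu_g \in [0,1]$, with the upper bound enforced by completeness of $g$.

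For $k=1$, set $w = e^{(n-2)u/2}$. Equation \eqref{Q_g^2k} gives $-w\,\Delta w = Q^{(2)}_g\, e^{nu}$, and integration by parts on the Euclidean ball $B_\rho(0)$ produces
\[
\int_{B_\rho(0)} Q^{(2)}_g\,d\mu_g = \int_{B_\rho(0)} |\nabla w|^2\,dx + \frac{n-2}{2}\int_{\partial B_\rho(0)} (-\partial_\nu u)\, w^2\,d\sigma.
\]
Substituting $u \sim -\alpha\log|x|$ shows both terms are of order $\rho^{(n-2)(1-\alpha)}$ for $\alpha<1$, with a logarithmic modification at $\alpha=1$. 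Converting Euclidean radius to geodesic radius via $r \sim \rho^{1-\alpha}/(1-\alpha)$ (for $\alpha<1$) or $r \sim \log\rho$ (for $\alpha=1$), one deduces
\[
\int_{B_r^g(p_0)} Q^{(2)}_g\,d\mu_g \geq c\, r^{n-2} \text{ if } \alpha\in(0,1), \qquad \int_{B_r^g(p_0)} Q^{(2)}_g\,d\mu_g \geq c\, r \text{ if } \alpha=1,
\]
each of which exceeds $O(\log r)$ and so forces $\alpha = 0$. The case $k=2$ (only meaningful for $n \geq 6$) runs along the same scheme with $v = e^{(n-4)u/2}$ and $(-\Delta)^2 v = Q^{(4)}_g\, e^{(n+4)u/2}$. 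Integrating by parts twice yields
\[
\int_{B_\rho(0)} Q^{(4)}_g\,d\mu_g = \int_{B_\rho(0)} (\Delta v)^2\,dx + \int_{\partial B_\rho(0)}\bigl(v\,\partial_\nu \Delta v - \Delta v\,\partial_\nu v\bigr)d\sigma,
\]
whose leading behavior, under the asymptotic of $u$, is of order $r^{n-4}$ (respectively $r$ when $\alpha = 1$), and then $|\int Q^{(4)}_g\,d\mu_g|$ dominates $\log r$ unless $\alpha=0$.

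With $\alpha = 0$ established, $\int_{\mr^n} Q^{(n)}_g\,d\mu_g = 0$ combined with $Q^{(n)}_g \geq 0$ forces $Q^{(n)}_g \equiv 0$, so $(-\Delta)^{n/2} u = 0$. The asymptotic then yields $u$ bounded on $\mr^n$, and the classical Liouville theorem for polyharmonic functions implies $u$ is constant. The main obstacle is to establish the asymptotic expansion of $u$ rigorously in the absence of $I_g > 0$: in Theorem \ref{thm:Q^2k_g gap theorem} the positivity of $I_g$ was what permitted a clean identification of integrals over geodesic and Euclidean balls, whereas here one must work directly from the integral representation of $u$ together with the Bishop--Gromov bound $\mathrm{Vol}_g(B_r^g(p_0)) \leq \omega_n r^n$, and rule out any hidden cancellation in the biharmonic boundary terms for the $k=2$ case. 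A further delicate point is the threshold $\alpha = 1$, where geodesic balls grow exponentially in Euclidean radius and the logarithmic hypothesis becomes essentially tight.
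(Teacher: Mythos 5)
Your scheme shares the paper's skeleton — reduce to showing the normalized total $Q^{(n)}_g$-curvature $\alpha_0$ vanishes, by showing that $\alpha_0>0$ forces the $Q^{(2k)}_g$-integral over geodesic balls to grow faster than $\log r$ — but the mechanism you propose for deriving that growth is genuinely different, and it has a gap you yourself flag but do not close.

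The paper derives the growth lower bound through spherical averages: Lemmas \ref{lem:r^2-Deltau}, \ref{lem:r^2abla-u^2}, \ref{lem:r^4Delta^2u}, \ref{lem:r^4(Deltau)^2} give asymptotics of $r^2\fint_{\partial B_r}(-\Delta u)$, $r^2\fint_{\partial B_r}|\nabla u|^2$, etc., purely from the logarithmic-potential representation of the normal metric; combined with the explicit formulas for $Q^{(2)}_g$ and $Q^{(4)}_g$ and the positivity $Q^{(2k)}_g\geq 0$ from Theorem \ref{thm:positive Q_4}, this yields the sphere-average lower bound $\fint_{\partial B_s}Q^{(2k)}_g e^{2ku}\,d\sigma\gtrsim s^{-2k}$ (Lemmas \ref{lem:R_g limit on sphere}, \ref{lem:fint Q^4 lower limit}). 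The only pointwise input about $u$ the paper uses is the one-sided bound $u(x)\geq -\alpha_0\log|x|-C$, which converts the remaining factor $e^{(n-2k)u}$ into $|x|^{-(n-2k)\alpha_0}$ before integrating over $[1,|z_r|]$. Crucially, no pointwise \emph{upper} bound on $u$ is required. Your integration-by-parts identity on the Euclidean ball, by contrast, produces a boundary term $\int_{\partial B_\rho}(-\partial_\nu u)\,w^2\,d\sigma$ (and the analogous biharmonic boundary terms for $k=2$) that couples a first (respectively third) derivative of $u$ with a nonlinear function of $u$, and to estimate such a product you need the two-sided pointwise asymptotic $u(x)=-\alpha\log|x|+C+o(1)$. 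That asymptotic is \emph{not} generally valid for normal metrics under the present hypotheses: the potential $\mathcal{L}(Q^{(n)}_g e^{nu})$ can be large on small sets near infinity where $Q^{(n)}_g e^{nu}$ concentrates, and the available substitutes are only the averaged statements of Lemma \ref{lem:barLf} (for $\overline{\mathcal{L}(f)}(r)/\log r$) and Lemma \ref{lem:e^ku=ek bar u} (for exponential spherical averages). These are not strong enough to control your boundary integrals pointwise, so the step ``substituting $u\sim -\alpha\log|x|$ shows both terms are of order $\rho^{(n-2)(1-\alpha)}$'' does not go through. Since you name this exactly as ``the main obstacle,'' I read your proposal as an outline with the central estimate deferred rather than a completed proof.

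Two smaller remarks. First, the sharp conversion $r\sim\log\rho$ at $\alpha_0=1$ is more than the paper proves or needs: Theorem \ref{thm:length identity} only gives $\log d_g(0,z_r)=o(\log|z_r|)$, and the paper exploits precisely this qualitative statement together with the lower bound $\int_{B_{|z_r|}}Q^{(2k)}_g\,d\mu_g\gtrsim\log|z_r|$ to conclude $\int_{B_r^g}Q^{(2k)}_g\,d\mu_g/\log r\to\infty$. Second, for your $k=2$ computation you implicitly rely on $Q^{(4)}_g\geq 0$ (to drop the bulk term $(\Delta v)^2$ and to replace $|Q^{(4)}_g|$ by $Q^{(4)}_g$); this is a nontrivial fact supplied by Theorem \ref{thm:positive Q_4}, which should be cited explicitly. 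None of these secondary points is fatal, but the missing pointwise asymptotic is, unless replaced by an argument of the spherical-average type the paper uses.
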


	This paper is organized as follows. In Section \ref{sec:log potential}, we establish key estimates for logarithmic potentials, which serve as foundational tools for our subsequent analysis. Section \ref{sec:non-negative R_g} is devoted to investigation of the growth properties of the function $u$  under appropriate scalar curvature assumptions, culminating in the proof of Theorem \ref{thm: int-Q^n-between 0,1}.  In Section \ref{sect:positive-Q_4}, we discuss the positivity of lower order $Q$-curvatures and Ricci curvature.  As  an application, we finish the proof of Theorem \ref{thm: positive Ricci}. Section  \ref{sec:A_inftyweight} provides essential background on strong $A_\infty$ weights, preparing for our final results. In Section \ref{sec: proof of integral growth}, we present the proofs of Theorems \ref{thm:Yau's conjecture} and  \ref{thm:Q^2k_g growth_on_geodesic_ball}.
	In Section \ref{sec:gap theorem}, we discuss the gap theorems for $Q^{(2k)}_g$ and prove Theorem \ref{thm:Q^2k_g gap theorem} and Theorem \ref{thm:Q^2k_g gap theoremfor logr}. Finally, we give some remarks on the vanishing isoperimetric ratio case in Section \ref{sec:vanishing I_g}.

	\hspace{3em}
	
	{\bf Acknowledgment.} The first author would like to thank Fang Wang for helpful discussions and for  bringing  the work \cite{Wang Fang} to his attention. The second author  is partially supported  by   Hong Kong General Research Fund “New frontiers in singular limits of nonlinear partial differential equations".   The third author is supported by NSFC (No.12171231).

	\section{Logarithmic potential estimates and normal metrics}\label{sec:log potential}

	In this section, we are going to establish some crucial estimates for logarithmic potential  to be used later. Given a function $f(x)\in L^\infty_{loc}(\mr^n)\cap L^1(\mr^n)$ with even integer $n\geq 2$. It is well known that the logarithmic potential 
	\begin{equation}\label{log potential}
		\mathcal{L}(f)(x):=\frac{2}{(n-1)!|\mathbb{S}^n|}\int_{\mr^n}\log\frac{|y|}{|x-y|}f(y)\ud y,
	\end{equation}
	satisfies the differential equation
	\begin{equation}\label{(-delta)^n/2 L(f)}
		(-\Delta)^{\frac{n}{2}}\mathcal{L}(f)(x)=f(x).
	\end{equation}
	More information on the logarithmic potential can be founded in Section 2 of  \cite{Li 23 Q-curvature}. For easier presentation, we list several notations we are going to use:
	first, we use $\alpha$ to indicate the some kind normalization of the total curvature, namely,
	$$\alpha:=\frac{2}{(n-1)!|\mathbb{S}^n|}\int_{\mr^n}f(x)\ud x.$$ Secondly, we should also use the average integration as usual: for given  a measurable set $E$, the average integral for function $\varphi$ over $E$  will be written as
	$$\fint_E\varphi\ud \mu:=\frac{1}{|E|}\int_E\varphi\ud \mu.$$
	The third notation is that $\ud\sigma(x)$ stands for the standard measure on sphere induced from Euclidean metric. 
	In particular, set the notation  $$\bar \varphi(r):=\fint_{\partial B_r(0)}\varphi\ud\sigma.$$ We use $C$ to denote a positive constant which may be different from line to line throughout this paper.
	
	Now,  we start up our potential estimates for a given function $f(x)\in L^\infty_{loc}(\mr^n)\cap L^1(\mr^n)$. As a first step, we try to set the growth rate of the potential.
	\begin{lemma}\label{lem:barLf}(See Lemma 2.5 in \cite{Li 24 total Q-curvature})
		There holds
		$$\lim_{r\to\infty}\frac{\overline{\mathcal{L}(f)}(r)}{\log r}=-\alpha.$$
	\end{lemma}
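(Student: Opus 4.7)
The plan is to swap the order of integration via Fubini and then analyze the spherical average of $\log|x-y|$. I would begin by writing
\begin{equation*}
\overline{\mathcal{L}(f)}(r) = \frac{2}{(n-1)!|\ms^n|}\int_{\mr^n}\phi_r(y)\,f(y)\,\ud y,
\end{equation*}
where $\phi_r(y):=\log|y|-\fint_{\partial B_r(0)}\log|x-y|\,\ud\sigma(x)$. After rescaling $x=r\omega$ on the sphere, the spherical average reduces to $\log r + J(y/r)$ with $J(z):=\fint_{\ms^{n-1}}\log|\omega-z|\,\ud\omega$, so the heart of the matter is to understand $J$.

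The key technical step will be to control $J$ uniformly on $\mr^n$. The elementary identity $|z|^2|\omega-z/|z|^2|^2=|\omega-z|^2$ for $\omega\in\ms^{n-1}$ yields a Kelvin-type inversion relation $J(z)=\log|z|+J(z/|z|^2)$ valid for every $z\neq 0$. Since $\log|\omega-z|$ has only an integrable logarithmic singularity on $\ms^{n-1}$, I would show $J$ extends continuously (hence boundedly) to the closed unit ball; coupled with the inversion identity this yields a uniform bound $|J(z)-\log_+|z||\leq C$ on $\mr^n\setminus\{0\}$. Consequently
\begin{equation*}
\phi_r(y) = (\log|y|-\log r)\,\mathbf{1}_{\{|y|\leq r\}}(y)+O(1)
\end{equation*}
uniformly in both $r$ and $y$.

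To finish, dividing by $\log r$ gives
\begin{equation*}
\frac{\overline{\mathcal{L}(f)}(r)}{\log r} = \frac{2}{(n-1)!|\ms^n|}\left(\int_{|y|\leq r}\frac{\log|y|}{\log r}\,f(y)\,\ud y - \int_{|y|\leq r} f(y)\,\ud y\right) + O\!\left(\frac{1}{\log r}\right).
\end{equation*}
The hypothesis $f\in L^1(\mr^n)$ makes the second integral converge to $\int_{\mr^n}f=\frac{(n-1)!|\ms^n|}{2}\alpha$, contributing $-\alpha$ to the limit. The main subtlety I expect is showing the first integral vanishes without assuming $\log|y|\,f\in L^1$; this should follow by dominated convergence, separating $|y|\leq 1$ (where $f\in L^\infty_{loc}$ makes $|\log|y|||f|$ locally integrable near the origin) from $1\leq|y|\leq r$ (where $|\log|y|/\log r|\leq 1$, so the integrand is dominated by $|f|\in L^1$), with pointwise limit zero in both regimes. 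Combining these estimates yields $\overline{\mathcal{L}(f)}(r)/\log r\to -\alpha$.
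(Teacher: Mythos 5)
Your proof is correct and takes a genuinely different route from the paper's. The paper invokes a ready-made pointwise decomposition from an earlier work ([Li 23 Q-curvature], Lemma~2.3), namely $\mathcal{L}(f)(x)=(-\alpha+o(1))\log|x|+c_n\int_{B_1(x)}\log\frac{1}{|x-y|}f(y)\,dy$, and then controls the spherical average of the residual $B_1(x)$-integral via Fubini and the tail smallness $\int_{B_{r+1}\setminus B_{r-1}}|f|=o(1)$. You instead swap the order of integration at the outset, reduce everything to the single explicit kernel $J(z)=\fint_{\ms^{n-1}}\log|\omega-z|\,d\omega$, and exploit the Kelvin-type identity $J(z)=\log|z|+J(z/|z|^2)$ to obtain $|J(z)-\log_+|z||\le C$ uniformly, after which the result is pure dominated convergence. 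Your approach is self-contained and arguably cleaner: it avoids the imported lemma entirely, it does not require a separate Fubini estimate on annuli, and the kernel $\phi_r$ it produces is explicit enough to extract more refined asymptotics if needed. The one place where a line or two of extra care is warranted is the boundedness of $J$ up to $\partial B_1$: the integrand $\log|\omega-z|$ develops a singularity as $|z|\uparrow 1$, and the uniform bound hinges on the $\theta^{n-2}\,d\theta$ decay of the surface measure near the pole (for $n\ge 3$; for $n=2$ harmonicity gives $J\equiv\log_+$ exactly). This is standard but should be stated rather than only asserted. Both proofs are valid; yours trades reliance on prior work for a small amount of direct kernel analysis.
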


	The  following two lemmas are elementary but very useful.
	\begin{lemma}\label{lem: mean value}(See Lemma 2.1 in \cite{Li 24 total Q-curvature})
		If $0< p\leq n-2$ and $r>0$, then for all $y \in \mr^n$, there holds
		$$\fint_{\partial B_r(0)}\left(\frac{|y|}{|x-y|}\right)^p\ud\sigma(x)\leq 1.$$
	\end{lemma}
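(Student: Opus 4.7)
The plan is to reduce everything to the single observation that for $0<p\leq n-2$ the function $h(x):=|x-y|^{-p}$ is superharmonic in $x$. Indeed, a direct computation yields
\[
\Delta_x|x-y|^{-p}=p(p+2-n)|x-y|^{-p-2}\leq 0
\]
wherever $x\neq y$, with equality precisely at the critical exponent $p=n-2$. The remaining argument splits into two geometric cases according to whether $y$ lies outside or inside the ball $B_r(0)$, bridged by a Kelvin-type inversion.

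Case $|y|\geq r$. Here $h$ is smooth and superharmonic on a neighborhood of $\overline{B_r(0)}$ (with an integrable boundary singularity in the edge case $|y|=r$, since $p\leq n-2<n-1$). The spherical mean value inequality for superharmonic functions applied at the origin gives
\[
\fint_{\partial B_r(0)}|x-y|^{-p}\,\ud\sigma(x)\leq h(0)=|y|^{-p},
\]
and multiplying through by $|y|^p$ is exactly the stated inequality. The borderline case $|y|=r$ can alternatively be recovered by continuity from $|y|\to r^+$.

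Case $|y|<r$. Now $h$ has its singularity inside $B_r(0)$, and the superharmonic mean value comparison at the origin is no longer available. The plan is to invoke the inversion $y^{\ast}:=(r^2/|y|^2)\,y$ of $y$ through the sphere $\partial B_r(0)$. A one-line expansion of $|x-y|^2$ and $|x-y^{\ast}|^2$ using $|x|=r$ produces the classical identity $|x-y|=(|y|/r)\,|x-y^{\ast}|$ on $\partial B_r(0)$, whence
\[
\fint_{\partial B_r(0)}\left(\frac{|y|}{|x-y|}\right)^p\ud\sigma(x)=\fint_{\partial B_r(0)}\left(\frac{r}{|x-y^{\ast}|}\right)^p\ud\sigma(x).
\]
Since $|y^{\ast}|=r^2/|y|>r$, the first case applies to the point $y^{\ast}$ and bounds the right-hand side by $(r/|y^{\ast}|)^p=(|y|/r)^p\leq 1$.

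No real obstacle is expected. The only mildly delicate items are the behavior at $|y|=r$, handled by integrability and continuity, and the inversion identity, which is routine. One may note that the bound is sharp at the harmonic endpoint $p=n-2$ with $|y|\geq r$, where the mean value inequality becomes an equality, confirming that the range $0<p\leq n-2$ is optimal for this style of argument.
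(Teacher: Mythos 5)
Your proof is correct. The paper itself supplies no argument here (it simply cites Lemma 2.1 of \cite{Li 24 total Q-curvature}), so there is nothing to compare against line by line, but the route you take—superharmonicity of $|x-y|^{-p}$ for $0<p\leq n-2$, the mean-value inequality at the origin when $|y|\geq r$, and a Kelvin inversion through $\partial B_r(0)$ to reduce $|y|<r$ to that case—is clean, complete, and in the same spirit as the paper's own proof of Lemma~\ref{lem:int 1/|x-y|^p leq r^-p}, which also rests on (sub)harmonicity of the kernel together with a maximum-principle comparison. The inversion identity $|x-y|=(|y|/r)\,|x-y^{\ast}|$ on $|x|=r$ is verified by the algebra you indicate, and the borderline $|y|=r$ is legitimately handled by continuity because $p\leq n-2<n-1$ makes $|x-y|^{-p}$ integrable on the sphere.

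Two small remarks. First, the point $y=0$ must be set aside at the outset: the inversion $y^\ast=(r^2/|y|^2)y$ is undefined there, but the claimed inequality is trivial since the integrand vanishes. Second, as an alternative you could observe that the $p=n-2$ case is exactly Newton's theorem, $\fint_{\partial B_r(0)}|x-y|^{2-n}\,\ud\sigma=\max\{r,|y|\}^{2-n}$, and then obtain $0<p<n-2$ by Jensen applied to the concave map $t\mapsto t^{p/(n-2)}$; this avoids the case split and the inversion entirely, at the cost of invoking Newton's identity. Either route is fine; yours is self-contained and correct.
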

	
	\begin{lemma}\label{lem:int 1/|x-y|^p leq r^-p}
		For  $0< p< n-1$ and $r > 0$, there holds
		$$\fint_{\partial B_r(0)}\frac{1}{|x-y|^p}\ud\sigma(x)\leq C(n,p)r^{-p}$$
		for all $y \in \mr^n$ with $n \geq 3$ where $C(n,p)$ is a constant depending on $n$ and $p$.
	\end{lemma}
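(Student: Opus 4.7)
The plan is to reduce the problem to the unit sphere by scaling, then split into a trivial ``far'' case and the main ``near'' case where explicit spherical integration is required.

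First I would use a scaling argument: substitute $x=r\tilde x$ and $y=r\tilde y$ in the integral. Since $|x-y|=r|\tilde x-\tilde y|$ and $\ud\sigma(x)=r^{n-1}\ud\sigma(\tilde x)$, while $|\partial B_r(0)|=r^{n-1}|\partial B_1(0)|$, we obtain
$$\fint_{\partial B_r(0)}\frac{1}{|x-y|^p}\ud\sigma(x)=r^{-p}\fint_{\partial B_1(0)}\frac{1}{|\tilde x-\tilde y|^p}\ud\sigma(\tilde x).$$
So it suffices to prove the bound $\fint_{\partial B_1(0)}|\tilde x-\tilde y|^{-p}\ud\sigma(\tilde x)\leq C(n,p)$ uniformly in $\tilde y\in\mr^n$.

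Next I would split into cases according to $s:=|\tilde y|$. If $s\leq 1/2$ or $s\geq 2$, then $|\tilde x-\tilde y|\geq 1/2$ for every $\tilde x\in\partial B_1(0)$, which makes the bound trivial. The substantive case is $s\in[1/2,2]$, where the integrand becomes singular as $\tilde y$ approaches the sphere. By rotational symmetry I may assume $\tilde y=(s,0,\ldots,0)$ and pass to spherical coordinates with polar axis along $\tilde y$, which yields
$$\fint_{\partial B_1(0)}\frac{1}{|\tilde x-\tilde y|^p}\ud\sigma(\tilde x)=C_n\int_0^\pi\frac{\sin^{n-2}\theta}{(1+s^2-2s\cos\theta)^{p/2}}\ud\theta.$$
Using the elementary identity $1+s^2-2s\cos\theta=(1-s)^2+2s(1-\cos\theta)\geq 2s(1-\cos\theta)\geq 1-\cos\theta$ (valid for $s\geq 1/2$), and substituting $t=\cos\theta$, the last integral is bounded by a constant multiple of
$$\int_{-1}^{1}(1-t)^{(n-3-p)/2}(1+t)^{(n-3)/2}\ud t.$$

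The main (mild) technical point is verifying convergence of this Beta-type integral. The singular endpoint is $t=1$, and convergence there requires $(n-3-p)/2>-1$, i.e., $p<n-1$, which is precisely our hypothesis; the exponent $(n-3)/2>-1$ at the other endpoint is automatic for $n\geq 3$. Thus the integral is bounded by an explicit Beta function $B\bigl(\frac{n-1-p}{2},\frac{n-1}{2}\bigr)$ depending only on $n$ and $p$, completing the proof after combining with the scaling reduction.
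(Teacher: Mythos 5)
Your proof is correct but takes a genuinely different route from the paper's. You reduce to the unit sphere by scaling (cleanly extracting the $r^{-p}$ factor), split on $|\tilde y|$, and for the delicate annulus $1/2\leq|\tilde y|\leq 2$ you compute the spherical integral in polar coordinates and bound it by a Beta function $B\bigl(\tfrac{n-1-p}{2},\tfrac{n-1}{2}\bigr)$, with convergence at the singular endpoint $t=1$ being exactly the condition $p<n-1$. The paper instead fixes $r$ and treats $h(y)=\fint_{\partial B_r(0)}|x-y|^{-p}\ud\sigma(x)$ as a function of $y$: for $n-2\leq p<n-1$ the function $|x-y|^{-p}$ is subharmonic in $y$, so $h$ is subharmonic on $\{|y|<r\}$ and on $\{|y|>r\}$ (where it decays at infinity), and the maximum principle forces the supremum onto $|y|=r$, where a scaling argument finishes; the remaining range $0<p<n-2$ is reduced to $p=n-2$ by H\"older's inequality. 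Your approach is more elementary and self-contained --- it avoids invoking the maximum principle and the two-case split in $p$, and it yields an explicit constant --- while the paper's is shorter for readers who already have the subharmonicity observation in hand.
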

	\begin{proof}
		If $ y = 0$, the inequality trivially holds true. For $|y|>0$, we define a function 
		$$ h(y, r, p) = \fint_{\partial B_r(0)}\frac{1}{|x-y|^p}\ud\sigma(x).$$  Notice that $h(0,r,p) = r^{-p}$ and $\lim_{|y| \to \infty} h(y,r,p) = 0$. It is easy to see that, if $n-2\leq p < n-1$, $h$ is sub-harmonic function for $y$ and for each fixed $r$. By the application of the maximum principle for subharmonic function on the domain $|y| < r$ as well as $|y| > r$, we see that the maximum value of $h$ can only be achieved on the sphere ${\partial B_r(0)}$. However, if $|y| = r$, by scaling, we have $h(y, p, r) \leq h(y, 1, p) r^{-p}$. Notice that $h(y, 1, p)$ is a constant which will be denoted by $C(n, p) = h(y, 1, p)$, our result follows in this case. Now if $0 < p < n-2$, then $q = \frac{n-2}{p} > 1$, by H\"older's inequality, we have $ h(y, r, p)^q \leq h(y, r, n-2) \leq h(y, 1, n-2) r^{2-n}$ by application of what we just proved. Therefore, we can easily obtain that $h(y, r, p) \leq (h(y, 1, n-2))^{\frac{p}{n-2}} r^{-p}$ which is what we are going to show with $C(n, p) = h(y, 1, n-2)^{\frac{p}{n-2}}$. The proof of lemma is complete.
		
	\end{proof}

For the case $n=4$, the following lemma is essentially identical to Lemma 3.2 in \cite{CQY}; a slightly different proof can be found in Lemma 2.7 of \cite{Li 24 total Q-curvature} for general $n$.
	\begin{lemma}\label{lem:e^ku=ek bar u}(See Lemma 2.7 in \cite{Li 24 total Q-curvature})
		For any $k>0$,   there holds
		$$\fint_{\partial B_r(0)}e^{k\mathcal{L}(f)}\ud \sigma=e^{k\overline{\mathcal{L}(f)}(r)+o(1)}$$
		where $o(1)\to 0$ as $r\to\infty.$
	\end{lemma}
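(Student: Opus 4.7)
The plan is to prove matching upper and lower bounds for $\fint_{\partial B_r(0)}e^{k\mathcal{L}(f)}\,\ud\sigma$. The lower bound $\fint e^{k\mathcal{L}(f)}\,\ud\sigma \geq e^{k\overline{\mathcal{L}(f)}(r)}$ is immediate from Jensen's inequality applied to the convex function $e^{k\cdot}$. The task is to establish the matching upper bound, for which it suffices to show
\begin{equation*}
\fint_{\partial B_r(0)}\exp\bigl(k(\mathcal{L}(f)(x) - \overline{\mathcal{L}(f)}(r))\bigr)\,\ud\sigma(x) \leq 1 + o(1).
\end{equation*}

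My strategy is to split the potential according to the position of $y$ relative to the shell $N_r := \{y : r/2 \leq |y| \leq 2r\}$. Set $c := 2/((n-1)!|\ms^n|)$ and $K_y(x) := \log(|y|/|x-y|) - \fint_{\partial B_r(0)}\log(|y|/|z-y|)\,\ud\sigma(z)$, so that $\mathcal{L}(f)(x) - \overline{\mathcal{L}(f)}(r) = V_{\mathrm{far}}(x) + V_{\mathrm{near}}(x)$ with the integrations split along $\partial N_r$. For $y \notin N_r$, a first-order Taylor expansion of $\log|x-y|$ (around $y = 0$ when $|y| \leq r/2$, and around $x = 0$ when $|y| \geq 2r$) yields the uniform estimate $|K_y(x)| \leq C\min(r/|y|,|y|/r)$ for all $x \in \partial B_r(0)$. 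Integrating against $|f|$ and using $f \in L^1(\mr^n)$, after splitting the inner region at $|y| = \sqrt{r}$ as in the proof of Lemma~\ref{lem:barLf}, gives $\sup_{x \in \partial B_r(0)}|V_{\mathrm{far}}(x)| = o(1)$, and hence $e^{kV_{\mathrm{far}}} = 1 + o(1)$ uniformly.

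It then remains to show $\fint e^{kV_{\mathrm{near}}}\,\ud\sigma = 1 + o(1)$. A scaling change of variables yields $\fint_{\partial B_r(0)}\log|z-y|\,\ud\sigma(z) = \log r + O(1)$ for $y \in N_r$, whence $V_{\mathrm{near}}(x) = -c\int_{N_r}\log|x-y|f(y)\,\ud y + c\alpha_{N_r}\log r + O(\lambda_r)$, where $\lambda_r := \int_{N_r}|f|\,\ud y \to 0$ (by $f \in L^1$) and $\alpha_{N_r} := \int_{N_r}f\,\ud y = \lambda_+ - \lambda_-$ with $\lambda_\pm := \int_{N_r}f_\pm\,\ud y$. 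Writing $f = f_+ - f_-$ and applying Jensen's inequality $e^{E[X]} \leq E[e^X]$ to the probability measures $f_\pm\chi_{N_r}/\lambda_\pm$ separately, combined with the crude bound $|x-y| \leq 3r$ on $N_r$, gives
\begin{equation*}
e^{kV_{\mathrm{near}}(x)} \leq r^{kc\alpha_{N_r}}(3r)^{kc\lambda_-}\int_{N_r}|x-y|^{-kc\lambda_+}\frac{f_+(y)}{\lambda_+}\,\ud y \cdot (1+o(1)).
\end{equation*}
Averaging in $x$ and applying Lemma~\ref{lem:int 1/|x-y|^p leq r^-p} with $p = kc\lambda_+ \to 0$ (so $C(n,p) \to 1$), the powers of $r$ cancel thanks to the identity $\alpha_{N_r} = \lambda_+ - \lambda_-$, leaving $\fint e^{kV_{\mathrm{near}}}\,\ud\sigma \leq 1 + o(1)$; Jensen supplies the matching lower bound.

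The heart of the argument lies in the near part: the logarithmic singularity of $\log|x-y|$ together with the $\log r$ growth of the centering constant preclude any pointwise control. The decisive observation is that the total mass $\lambda_r$ of $|f|$ over the shell $N_r$ tends to zero as $r \to \infty$, which simultaneously makes the exponents $kc\lambda_\pm$ in Lemma~\ref{lem:int 1/|x-y|^p leq r^-p} infinitesimal and forces the two $r^{\pm kc\lambda_\pm}$ factors to cancel algebraically via $\alpha_{N_r} = \lambda_+ - \lambda_-$.
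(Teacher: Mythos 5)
Your argument is correct, but it follows a genuinely different decomposition from the paper's. The paper splits the potential itself into $A_1+A_2+A_3$ according to $|y|\lessgtr |x|/2$, where $A_1=c\int_{B_{|x|/2}}\log(|y|/|x|)f\,\ud y$ is exactly radial (so contributes nothing to the centered quantity), $A_2\to 0$ uniformly via the $\sqrt{|x|}$ split, and the tail $A_3$ is handled by Jensen together with Lemma~\ref{lem: mean value} (which bounds $\fint_{\partial B_r(0)}(|y|/|x-y|)^p\,\ud\sigma\leq 1$), then the positive and negative parts are recombined by H\"older. You instead center first and split $y$ into the shell $N_r=\{r/2\leq|y|\leq 2r\}$ and its complement: the far part is killed uniformly by a Taylor estimate $|K_y(x)|\leq C\min(r/|y|,|y|/r)$ (this correctly subsumes both the paper's $A_1$ and $A_2$ and the exterior tail $|y|\geq 2r$ in one stroke), while the near shell is handled by Jensen together with Lemma~\ref{lem:int 1/|x-y|^p leq r^-p}; the $f_-$ piece is disposed of by the deterministic bound $|x-y|\leq 3r$ on $N_r$, after which the algebraic cancellation $\alpha_{N_r}=\lambda_+-\lambda_-$ eliminates all powers of $r$. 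What your route buys is a more symmetric treatment (no need to exploit that one piece of the kernel is exactly radial) and a single pointwise $o(1)$ for everything outside the shell; what the paper's route buys is that it avoids computing the centering constant $\fint_{\partial B_r}\log|z-y|\,\ud\sigma\approx\log r$ explicitly, and uses a slightly weaker mean-value lemma. Two small points you should address for completeness: the Jensen step requires $\lambda_\pm>0$ (the degenerate cases $\lambda_+=0$ or $\lambda_-=0$ are trivial but should be mentioned, as the paper does), and you rely on $C(n,p)\to 1$ as $p\to 0$ in Lemma~\ref{lem:int 1/|x-y| ^p leq r^-p}, which does hold since $C(n,p)=h(y,1,n-2)^{p/(n-2)}$ with $h(y,1,n-2)$ a fixed positive constant, but is worth making explicit since that lemma is stated only as a bound with an unspecified constant.
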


	Building upon the above lemma, we now are in the position to establish a crucial estimate for the logarithmic potential involving higher-order derivatives.
	\begin{lemma}\label{lem:equ nabla^k Lf}
		Given real numbers $0<p<n-1$, $q>0$ and an integer $1\leq k\leq n-2$.   The following estimate holds true for all $r$ sufficiently large
		$$\fint_{\partial B_r(0)}e^{q\mathcal{L}(f)}|\nabla^k \mathcal{L}(f)|^{\frac{p}{k}}\ud \sigma\leq C r^{-p}e^{q\overline{\mathcal{L}(f)}(r)}.$$
	\end{lemma}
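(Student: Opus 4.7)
My plan is to combine two ingredients: the pointwise bound on the $k$-th derivatives of the logarithmic potential and Lemma \ref{lem:e^ku=ek bar u}, glued together by H\"older's inequality on the sphere $\partial B_r(0)$.

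First I would observe that, since the $k$-th derivative of $-\log|x-y|$ in $x$ is a homogeneous function of degree $-k$ in $x-y$, differentiating \eqref{log potential} yields the pointwise bound
$$|\nabla^k \mathcal{L}(f)(x)| \leq C \int_{\mr^n} \frac{|f(y)|}{|x-y|^k}\,\ud y,$$
which is finite since $k \leq n-2 < n$ and $f \in L^\infty_{loc} \cap L^1$. Next I would apply H\"older's inequality on $\partial B_r(0)$ with conjugate exponents $\alpha,\beta > 1$ to decouple the two factors:
$$\fint_{\partial B_r(0)} e^{q\mathcal{L}(f)} |\nabla^k \mathcal{L}(f)|^{p/k}\,\ud\sigma \leq \left(\fint_{\partial B_r(0)} e^{q\alpha \mathcal{L}(f)}\,\ud\sigma\right)^{1/\alpha} \left(\fint_{\partial B_r(0)} |\nabla^k \mathcal{L}(f)|^{p\beta/k}\,\ud\sigma\right)^{1/\beta}.$$
By Lemma \ref{lem:e^ku=ek bar u} applied with the exponent $q\alpha$, the first factor equals $e^{q\overline{\mathcal{L}(f)}(r)+o(1)}$, which is at most $2 e^{q\overline{\mathcal{L}(f)}(r)}$ for $r$ large.

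The crux is to choose $\beta$ so that $P := p\beta/k \geq 1$ and simultaneously $p\beta < n-1$. The first condition makes $t \mapsto t^P$ convex, so Jensen's inequality with the probability measure $|f|\,\ud y/\|f\|_{L^1}$ yields
$$|\nabla^k \mathcal{L}(f)(x)|^P \leq C \|f\|_{L^1}^{P-1} \int_{\mr^n} \frac{|f(y)|}{|x-y|^{p\beta}}\,\ud y.$$
The second condition then permits invoking Lemma \ref{lem:int 1/|x-y|^p leq r^-p} after Fubini, producing $\fint_{\partial B_r(0)} |\nabla^k \mathcal{L}(f)|^P\,\ud\sigma \leq C r^{-p\beta}$; raising to the power $1/\beta$ bounds the second H\"older factor by $C r^{-p}$. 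Multiplying the two pieces delivers the claim.

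The only genuine obstacle is ensuring that both constraints on $\beta$ are compatible, i.e., that there exists $\beta > 1$ with $k \leq p\beta < n-1$. This is precisely where the hypothesis $k \leq n-2$ is essential: in the easy regime $p \geq k$ one simply takes $\beta$ slightly above $1$, whereas in the harder regime $p < k$ one must push $\beta \geq k/p$ while keeping $p\beta < n-1$, and $k \leq n-2$ guarantees the feasible interval of exponents is non-empty. I expect the handling of this second regime, and in particular the requirement that the $L^P$-Jensen step go in the correct (upper-bound) direction, to be the main technical point of the argument.
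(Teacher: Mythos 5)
Your proposal is correct and follows essentially the same route as the paper's proof: a pointwise bound $|\nabla^k\mathcal{L}(f)(x)|\le C\int|f(y)|\,|x-y|^{-k}\ud y$, an intermediate exponent $p_2=p\beta$ with $\max\{k,p\}<p_2<n-1$, Jensen (equivalently H\"older) to turn $|\nabla^k\mathcal{L}(f)|^{p_2/k}$ into a single integral against $|x-y|^{-p_2}$, then Fubini with Lemma \ref{lem:int 1/|x-y|^p leq r^-p} and the spherical H\"older step paired with Lemma \ref{lem:e^ku=ek bar u}. Your feasibility check — that $k\le n-2<n-1$ guarantees a valid choice of $\beta$ in both regimes $p\ge k$ and $p<k$ — is exactly the role the condition $\max\{k,p\}<p_2<n-1$ plays in the paper.
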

	
	\begin{proof}
		
		For any $p_1$ satisfying $k \leq p_1<n-1$, H\"older's inequality implies that 
		\begin{align*}
			|\nabla^k \mathcal{L}(f)(x)|^{\frac{p_1}{k}}\leq &C\left(\int_{\mr^n}\frac{1}{|x-y|^k}|f(y)|\ud y\right)^{\frac{p_1}{k}}\\
			\leq &C\left(\int_{\mr^n}\frac{1}{|x-y|^{p_1}}|f(y)|\ud y\right)\left(\int_{\mr^n}|f(y)|\ud y\right)^{\frac{p_1-k}{k}}\\
			\leq &C\int_{\mr^n}\frac{1}{|x-y|^{p_1}}|f(y)|\ud y.
		\end{align*}
		Applying  Lemma \ref{lem:int 1/|x-y|^p leq r^-p} and Fubini's theorem, one has
		\begin{equation}\label{fintn nabla u^p}
			\fint_{\partial B_r(0)}|\nabla^k \mathcal{L}(f)|^{\frac{p_1}{k}}\ud\sigma\leq Cr^{-p_1}.
		\end{equation}
		The estimate  \eqref{fintn nabla u^p}, together with Lemma \ref{lem:e^ku=ek bar u}, implies
		\begin{align*}
			&\fint_{\partial B_r(0)}e^{q\mathcal{L}(f)}|\nabla^k \mathcal{L}(f)|^{\frac{p}{k}}\ud\sigma\\
			\leq &\left(\fint_{\partial B_r(0)}e^{\frac{p_2q}{p_2-p}\mathcal{L}(f)}\ud\sigma\right)^{1-\frac{p}{p_2}}\left(\fint_{\partial B_r(0)}|\nabla^k \mathcal{L}(f)|^{\frac{p_2}{k}}\ud\sigma\right)^{\frac{p}{p_2}}\\
			\leq &Ce^{q\overline{\mathcal{L}(f)}(r)}r^{-p}
		\end{align*}	
		where the constant $p_2$ is chosen to satisfy $\max\{k,p\}< p_2<n-1$.
		
		Thus, the  proof is complete.
	\end{proof}
	
	Now, let us recall some facts. Usually, a conformal metric $g = e^{2u}|dx|^2$ on $\mathbb{R}^n$ has \emph{finite total $Q^{(n)}_g$-curvature} if 
	$$
	\int_{\mathbb{R}^n} |Q^{(n)}_g| e^{nu} \, dx < +\infty.
	$$
	Such a metric is called \emph{normal metric} if it satisfies
	\begin{equation}\label{normal-metric-def}
		u(x) = \mathcal{L}(Q^{(n)}_g e^{nu})(x) + C.
	\end{equation}

	The concept of normal metrics was firstly introduced by Finn \cite{Finn} for complete open surfaces and later extended to higher dimensions with higher-order curvature by Chang, Qing, and Yang \cite{CQY}. An important geometric criterion for normal metrics was established in \cite{CQY}, where the authors prove that for a conformal metric with finite total $Q^{(n)}_g$-curvature, if the scalar curvature $R_g \geq 0$ outside some compact set, then the metric must be normal. 
	
	Let us further recall that the total $Q^{(n)}_g$-curvature integral is just the Chern-Gauss-Bonnet integral, it should measure the topology of $\mr^n$. The authors of~\cite{CQY} also show that the deficit between  $Q^{(n)}_g$-curvature integral  and the topology of $\mr^n$ is measured by an isoperimetric ratio at infinity. Notice that the Euler number of $\mr^n$ is equal to $1$. To be more precise, we repeat it here for reader's convenience.
	\begin{theorem}\label{thm:CQY-thm}(See \cite{CQY}, \cite{Fa}, \cite{NX})
		Suppose a complete and conformal metric $g=e^{2u}|dx|^2$ on $\mr^n$ with finite total $Q^{(n)}_g$-curvature with $n\geq 4$  an even integer is given. If the scalar curvature $R_g$ is non-negative outside  a compact set, then there holds
		$$1-\frac{2}{(n-1)!|\mathbb{S}^n|}\int_{\mr^n}Q^{(n)}_ge^{nu}\ud x=\lim_{r\to\infty}\frac{V_g(\partial B_r(0))^{\frac{n}{n-1}}}{c_nV_g(B_r(0))}$$
		where $c_n$ is a positive constant depending on $n$, to be precise, $$c_n = |\mathbb{S}^{n-1}|^{\frac{n}{n-1}} |\mathbb{S}^n|^{-1}.$$
	\end{theorem}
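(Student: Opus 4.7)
The plan is to follow the framework developed by Chang-Qing-Yang \cite{CQY} in dimension four, later extended to all even dimensions by Fang \cite{Fa} and Ndiaye-Xiao \cite{NX}, implemented here via the potential estimates of this section. First, under the hypotheses (finite total $Q^{(n)}_g$-curvature plus $R_g\geq 0$ outside a compact set), the metric is forced to be normal, namely $u(x)=\mathcal{L}(Q^{(n)}_g e^{nu})(x)+C$, as recalled in the paragraph before the statement. Setting $f=Q^{(n)}_g e^{nu}\in L^1(\mathbb{R}^n)$, Lemma \ref{lem:barLf} then yields the precise radial growth
$$\overline{u}(r)=-\alpha\log r+o(\log r)+C,\qquad \alpha=\frac{2}{(n-1)!|\mathbb{S}^n|}\int_{\mathbb{R}^n}Q^{(n)}_g e^{nu}\,\ud x.$$

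Next, I would express both geometric quantities through spherical averages:
$$V_g(\partial B_r(0))=|\mathbb{S}^{n-1}|\,r^{n-1}\fint_{\partial B_r(0)}e^{(n-1)u}\,\ud\sigma,\qquad V_g(B_r(0))=\int_0^r|\mathbb{S}^{n-1}|\,s^{n-1}\fint_{\partial B_s(0)}e^{nu}\,\ud\sigma\,\ud s.$$
Applying Lemma \ref{lem:e^ku=ek bar u} with both $k=n-1$ and $k=n$ produces the matching
$$\left(\fint_{\partial B_r(0)}e^{(n-1)u}\,\ud\sigma\right)^{\frac{n}{n-1}}=\fint_{\partial B_r(0)}e^{nu}\,\ud\sigma\cdot(1+o(1)),$$
from which $V_g(\partial B_r(0))^{\frac{n}{n-1}}=|\mathbb{S}^{n-1}|^{\frac{1}{n-1}}\,r\,\frac{\ud}{\ud r}V_g(B_r(0))\,(1+o(1))$. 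The problem therefore reduces to evaluating $\lim_{r\to\infty}rV_g(B_r(0))'/V_g(B_r(0))$.

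For this last step, substituting the radial asymptotic into $F(s):=\ud V_g(B_s(0))/\ud s=|\mathbb{S}^{n-1}|\,s^{n-1}e^{n\overline{u}(s)+o(1)}$ shows that $F$ is asymptotically regularly varying of index $n-1-n\alpha$. A direct asymptotic analysis (split the integration into a negligible piece on $[0,r^{1/2}]$ and a dominant piece on $[r^{1/2},r]$ where the integrand behaves uniformly like $s^{n-1-n\alpha+o(1)}$) then yields $\lim_{r\to\infty}rV_g(B_r(0))'/V_g(B_r(0))=n(1-\alpha)$, and substituting the explicit form of $c_n$ produces the claimed identity $1-\alpha$. The main obstacle is precisely this final asymptotic step: since the error in $\overline{u}(r)$ is controlled only at the scale $o(\log r)$ rather than pointwise by $O(1)$, the integrand $F$ is not strictly regularly varying, so a naive L'H\^opital argument can break down. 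The resolution is to combine the monotonicity of $V_g(B_r(0))$ with the completeness of $g$ (which rules out $\alpha>1$ and hence forces $V_g(B_r(0))\to\infty$) in order to bootstrap Lemma \ref{lem:barLf} into the integral asymptotic required.
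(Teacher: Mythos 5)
The paper does not prove this theorem; it is quoted verbatim from \cite{CQY}, \cite{Fa}, \cite{NX}, so there is no in-paper argument for you to be compared against. Your outline does, however, mirror the structure of those references closely: pass to a normal metric, replace spherical integrals of $e^{ku}$ by $e^{k\bar u(r)+o(1)}$ via Lemma \ref{lem:e^ku=ek bar u}, and reduce the claim to the evaluation of a one-variable limit $\lim_r r V_g(B_r)'/V_g(B_r)$.

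The genuine gap is exactly where you flag it, and your proposed repair does not close it. Lemma \ref{lem:barLf} gives only $\bar u(r)=(-\alpha+o(1))\log r$, i.e.\ $e^{n\bar u(s)}=s^{-n\alpha+\delta(s)}$ with $\delta(s)\to 0$. On $[r^{1/2},r]$ you claim the integrand ``behaves uniformly like $s^{n-1-n\alpha+o(1)}$,'' but for $s$ in this range the multiplicative error $s^{\delta(s)}$ can swing by a factor of order $r^{C\sup_{s\geq r^{1/2}}|\delta(s)|}$, and since all one knows is $\sup|\delta|\to 0$ (not $\sup|\delta|\cdot\log r\to 0$) this factor need not stay bounded. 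A function of the form $\bar u(s)=-\alpha\log s+\sqrt{\log s}\,\sin(\log\log s)$ satisfies everything you have used so far but ruins the regular variation needed for the split-integral argument, so the limit $\lim_r rV_g'/V_g$ cannot be extracted from Lemma \ref{lem:barLf} alone. Likewise, L'H\^opital needs not just $V_g(B_r)\to\infty$ but convergence of the ratio of derivatives, which again requires more than $\bar u(r)/\log r\to-\alpha$.

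What is actually needed is the one-derivative-sharper statement $r\,\bar u'(r)\to-\alpha_0$. This does not follow from the growth of $\bar u$ but does follow from the potential representation of $\nabla u$ (or $\Delta u$): integrating the identity $r^2\fint_{\partial B_r}(-\Delta u)\,\ud\sigma\to(n-2)\alpha_0$ — exactly Lemma \ref{lem:r^2-Deltau}, which the paper records in Section \ref{sec:gap theorem} citing \cite{Li 24 total Q-curvature} — over $[0,r]$ gives
\[
\bar u'(r)=\frac{1}{r^{n-1}}\int_0^r s^{n-1}\fint_{\partial B_s}\Delta u\,\ud\sigma\,\ud s
=-\frac{1}{r^{n-1}}\int_0^r s^{n-3}\bigl((n-2)\alpha_0+o(1)\bigr)\ud s=-\frac{\alpha_0+o(1)}{r}.
\]
With this, and after peeling off the $(1+o(1))$ factors coming from Lemma \ref{lem:e^ku=ek bar u}, L'H\^opital yields $\lim_r rV_g'/V_g=n(1-\alpha_0)$ cleanly; your sketch should invoke this finer estimate rather than trying to bootstrap Lemma \ref{lem:barLf}. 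Separately, the step ``completeness rules out $\alpha_0>1$'' is asserted but not proved; in the cited works this is a nontrivial point (roughly, $\alpha_0>1$ would force $\int_0^\infty\fint_{\partial B_t}e^{u}\ud\sigma\,\ud t<\infty$, whence almost every ray has finite length, contradicting completeness), and it cannot be replaced by citing $\alpha_0\le1$ from Theorem \ref{thm: int-Q^n-between 0,1}, since that theorem in turn relies on Theorem \ref{thm:CQY-thm}.
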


	An immediate corollary of the above theorem yields the following result concerning the isoperimetric ratio. For more substantial developments regarding $Q$-curvature and isoperimetric inequalities, we refer to \cite{Wang IMRN}, \cite{Wang Yi}.
	\begin{corollary}\label{cor:I_g-positive}
		Given a complete and conformal metric $g=e^{2u}|dx|^2$ on $\mr^n$ with finite total $Q^{(n)}_g$-curvature where $n\geq 4$ is an even integer. If the scalar curvature $R_g\geq 0$ outside  a compact set and the isoperimetric ratio $I_g$ is positive, then there holds
		\begin{equation}\label{alpha_0<1}
			\int_{\mr^n}Q^{(n)}_ge^{nu}\ud x<\frac{(n-1)!|\mathbb{S}^n|}{2}.
		\end{equation}
	\end{corollary}
	\begin{proof}
		Based on the definition of $I_g$ \eqref{def:isoperimetric_ratio} and $I_g>0$, one has
		$$\lim_{r\to\infty}\inf \frac{V_g(\partial B_r(0))^{\frac{n}{n-1}}}{V_g(B_r(0))}\geq I_g>0.$$
		Using Theorem \ref{thm:CQY-thm}, we finish the proof.
	\end{proof}
	\begin{remark}
		It follows from Theorem 1.1 in \cite{Wang IMRN} that the isoperimetric ratio $I_g$ is positive provided that \eqref{alpha_0<1} holds.
	\end{remark}

	\section{Conformal metrics with non-negative scalar curvature}\label{sec:non-negative R_g}

	In this section, we analyze the asymptotic behavior near the ends for conformal metrics with non-negative scalar curvature. For related results, we refer the interested reader to the references \cite{CHY}, \cite{SY Inv}, and \cite{MQ APDE}.

	\begin{prop}\label{prop:baru-upper-bound}
		Consider a conformal metric $g=e^{2u}|\ud x|^2$ on $\mr^n$ with  $n\geq 3$.	If the negative part of scalar curvature $R_g$ of the metric $g$ is in $L^{\frac{2n}{n+2}}(\mr^n,g)$, then, the spherical average of $u^+$ where $u^+:=\max\{u,0\}$ is bounded above, i.e., there exists a constant $C > 0$ such that
		$$\overline{u^+}(r)\leq C.$$
	\end{prop}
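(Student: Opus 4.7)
The plan is to pass to the conformal factor $v := e^{\frac{n-2}{2}u}$ and exploit superharmonicity. Since $g = e^{2u}|\ud x|^2$, the Yamabe equation for $v$ reads
\[ -\Delta v = \frac{n-2}{4(n-1)}\,R_g\,v^{\frac{n+2}{n-2}}. \]
Splitting $R_g = R_g^+ - R_g^-$ and letting $w$ be the Riesz potential of the non-negative function $\frac{n-2}{4(n-1)}R_g^- v^{\frac{n+2}{n-2}}$ (i.e., convolution with the Newtonian kernel $|x|^{-(n-2)}$ up to a dimensional constant), the difference $v - w$ satisfies $-\Delta(v-w) = \frac{n-2}{4(n-1)}R_g^+ v^{\frac{n+2}{n-2}} \geq 0$ and hence is distributionally superharmonic on $\mr^n$.

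The core estimate is on the spherical average of $w$. Newton's shell theorem gives
\[ \fint_{\partial B_r(0)}\frac{\ud\sigma(x)}{|x-y|^{n-2}} = \frac{1}{\max(|y|,r)^{n-2}}, \]
so by Fubini $\overline{w}(r) = C\int_{\mr^n} R_g^-(y)\,e^{\frac{n+2}{2}u(y)}\max(|y|,r)^{-(n-2)}\ud y$. The hypothesis $R_g^-\in L^{\frac{2n}{n+2}}(\mr^n,g)$ is exactly calibrated for Hölder's inequality with the conformal conjugate pair $\bigl(\frac{2n}{n+2},\frac{2n}{n-2}\bigr)$: the factorization $R_g^- e^{\frac{n+2}{2}u}=[(R_g^-)^{\frac{2n}{n+2}}e^{nu}]^{\frac{n+2}{2n}}$ then yields
\[ \overline{w}(r) \leq C\,\|R_g^-\|_{L^{\frac{2n}{n+2}}(\mr^n,g)}\left(\int_{\mr^n}\max(|y|,r)^{-2n}\,\ud y\right)^{\frac{n-2}{2n}} \leq C\, r^{-\frac{n-2}{2}}. \]
In particular $\overline{w}(r)$ is finite for every $r > 0$ and tends to zero at infinity.

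Because spherical means of distributional superharmonic functions in $L^1_{\mathrm{loc}}$ are non-increasing in the radius, fixing any $r_0 > 0$ gives $\overline{v-w}(r) \leq \overline{v-w}(r_0)$ for $r\geq r_0$, hence $\overline{v}(r) \leq \overline{v}(r_0) + C$ uniformly. Finally, the elementary bound $e^{\frac{n-2}{2}u^+} = \max(1,v) \leq 1 + v$ together with Jensen's inequality gives
\[ e^{\frac{n-2}{2}\overline{u^+}(r)} \leq \fint_{\partial B_r(0)} e^{\frac{n-2}{2}u^+}\,\ud\sigma \leq 1 + \overline{v}(r) \leq C, \]
yielding the required bound $\overline{u^+}(r)\leq C$. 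The principal technical point is verifying that $w \in L^1_{\mathrm{loc}}(\mr^n)$, so that monotonicity of spherical means of the superharmonic function $v - w$ is actually applicable; this is a direct consequence of the same Hölder scheme applied with the constant kernel $\mathbf{1}_{B_R}$. Conceptually, the entire argument turns on the conformal pairing $\bigl(\tfrac{2n}{n+2},\tfrac{2n}{n-2}\bigr)$, which identifies the hypothesis on $R_g^-$ as precisely the Sobolev dual of the volume density $e^{nu}=v^{\frac{2n}{n-2}}$.
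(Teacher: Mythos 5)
Your proof is correct, and it lands on exactly the same Hölder calibration as the paper's, but the packaging is different. The paper avoids introducing the Riesz potential $w$ altogether: it writes $\overline{v}(r)-\overline{v}(1)=\int_1^r\frac{\ud}{\ud s}\overline{v}(s)\,\ud s$, rewrites the derivative via the divergence theorem as $|\partial B_s|^{-1}\int_{B_s}\Delta v\,\ud x$, bounds $\Delta v\leq\frac{n-2}{4(n-1)}R_g^- v^{\frac{n+2}{n-2}}$, and then applies the same Hölder split with the conjugate pair $\bigl(\frac{2n}{n+2},\frac{2n}{n-2}\bigr)$ against $|B_s(0)|^{\frac{n-2}{2n}}$, producing the integrable weight $s^{-n/2}$ (here $n\geq 3$ is used). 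That route is essentially your monotonicity-of-superharmonic-means argument made concrete, but it saves you from having to construct $w$, invoke the shell theorem, and verify that $w$ is locally integrable and regular enough for the monotonicity of spherical means to apply — points you rightly flag as the technical burden of your version. Your concluding step ($\max(1,v)\leq 1+v$ plus Jensen) is also a bit heavier than the paper's one-liner $C\geq\fint e^{\frac{n-2}{2}u}\geq\frac{n-2}{2}\fint u^+$ via $e^t\geq\max(t,0)$, though both are fine. Net: your argument is sound and structurally close; the paper's direct divergence-theorem computation is the leaner implementation of the same superharmonicity idea.
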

	\begin{proof}
		Firstly, a direct computation shows that the scalar curvature $R_g$ satisfies the following equation
		\begin{equation}\label{scalar curvature}
			(-\Delta)e^{\frac{n-2}{2}u}=\frac{n-2}{4(n-1)}R_ge^{\frac{n+2}{2}u}.
		\end{equation}
		Applying  the divergence theorem and H\"older's inequality, we obtain, for any $r \geq 1$, that 
		\begin{align*}
			&\fint_{\partial B_{r}(0)}e^{\frac{n-2}{2}u}\ud \sigma-\fint_{\partial B_{1}(0)}e^{\frac{n-2}{2}u}\ud \sigma\\
			=&\int^r_{1}\frac{\ud }{\ud s}\left(\fint_{\partial B_s(0)} e^{\frac{n-2}{2}u}\ud\sigma\right)\ud s\\
			=&\int^r_{1}\fint_{\partial B_s(0)}\frac{\partial e^{\frac{n-2}{2}u}}{\partial \nu}\ud\sigma\ud s\\
			=&\int^r_{1}|\partial B_s(0)|^{-1}\int_{B_s(0)}\Delta e^{\frac{n-2}{2}u}\ud x\ud s\\
			\leq &\frac{n-2}{4(n-1)}\int^r_{1}|\partial B_s(0)|^{-1}\int_{B_{s}(0)}R_g^-e^{\frac{n+2}{2}u}\ud x\ud s\\
			\leq &\frac{n-2}{4(n-1)}\int^r_{1}|\partial B_s(0)|^{-1}\left(\int_{B_{s}(0)}(R_g^-)^{\frac{2n}{n+2}}e^{nu}\ud x\right)^{\frac{n+2}{2n}}|B_s(0)|^{\frac{n-2}{2n}}\ud s\\
			\leq &C(n)\|R_g^-\|_{L^{\frac{2n}{n+2}}(\mr^n,g)}\int^r_{1}s^{-\frac{n}{2}}\ud s\\
			\leq &C(n)\|R_g^-\|_{L^{\frac{2n}{n+2}}(\mr^n,g)}
		\end{align*}
		where $\nu$ is the out normal  derivative and   the assumption $n\geq 3$ is used in the last estimate.	Then, using the fact $e^t\geq \max\{t,0\}$, there holds
		$$	C\geq \fint_{\partial B_{r}(0)}e^{\frac{n-2}{2}u}\ud \sigma\geq \frac{n-2}{2}\fint_{\partial B_{r}(0)}u^+\ud\sigma$$
		which yields that,  for $r\geq 1$,
		\begin{equation}\label{bar u upper bounded by C}
			\overline{u^+}(r)\leq C.
		\end{equation}
		Obviously,  for $0\leq r<1$, there holds
		$$\overline{u^+}(r)\leq \|u\|_{L^\infty(B_1(0))}.$$
		Thus, we finish the proof.
	\end{proof}
	
	We now turn to the lower bound estimate of $u$ with the help of maximum principle.
	\begin{prop}\label{prop:bar u lower bound}
		Consider a conformal metric $g=e^{2u}|dx|^2$ on $\mr^n$ with $n\geq 3$. Supposing that the scalar curvature $R_g\geq 0$ outside a compact set, then there holds
		$$u(x)\geq -2\log(|x|+1)-C.$$
	\end{prop}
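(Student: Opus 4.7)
The strategy is to transplant the problem onto the sphere and invoke the Chang--Hang--Yang lemma. Consider the inverse stereographic projection $\phi : \mathbb{R}^n \to \mathbb{S}^n\setminus\{N\}$ (with $N$ the north pole); the standard sphere metric pulls back to
\[
\phi^*g_{\mathbb{S}^n} \;=\; \frac{4}{(1+|x|^2)^2}\,|dx|^2.
\]
Setting
\[
w_0(x) \;:=\; u(x) + \log\frac{1+|x|^2}{2},
\]
we rewrite $g = e^{2u}|dx|^2 = e^{2w_0}\,g_{\mathbb{S}^n}$ on $\Omega := \mathbb{S}^n\setminus\{N\}$. Since $g$ is complete on $\mathbb{R}^n$ and the map $\phi$ is a diffeomorphism onto its image, the metric $e^{2w_0}g_{\mathbb{S}^n}$ is a complete smooth conformal metric on $\Omega$. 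Scalar curvature is intrinsic to the metric, so the hypothesis gives $R_{e^{2w_0}g_{\mathbb{S}^n}} = R_g \geq -C_0$.

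Now I apply Lemma \ref{CHY lemma} with this $\Omega$ and $w_0$: since $\partial\Omega = \{N\}$, the conclusion says
\[
w_0(x) \to +\infty \quad\text{as } d_{g_{\mathbb{S}^n}}(\phi(x),N)\to 0,
\]
which in the Euclidean picture means $w_0(x)\to +\infty$ as $|x|\to\infty$. Together with smoothness of $w_0$ on the compact set $\{|x|\le R\}$ for any $R$, this forces a uniform lower bound $w_0(x)\ge -C$ on all of $\mathbb{R}^n$.

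Unwinding the definition of $w_0$ and using the elementary inequality $1+|x|^2\le (1+|x|)^2$, we obtain
\[
u(x) \;\ge\; -\,C - \log\frac{1+|x|^2}{2} \;\ge\; -2\log(|x|+1) - C',
\]
which is precisely the desired bound. The only real content is recognising that the logarithmic gap $\log\frac{1+|x|^2}{2}$ absorbs exactly the cost of passing from the round metric on $\mathbb{S}^n$ to the flat metric on $\mathbb{R}^n$, and that the Chang--Hang--Yang lemma supplies the qualitative ``blow-up at infinity'' which translates into the quantitative lower bound. No additional obstacle arises beyond checking the conformal change correctly and ensuring completeness on $\Omega$, both of which are immediate.
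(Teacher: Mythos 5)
Your proof is correct and follows essentially the same route as the paper: stereographic projection to view $g$ as $e^{2w_0}g_{\mathbb{S}^n}$ on $\mathbb{S}^n\setminus\{N\}$ (your $w_0$ is the paper's $U$), then Lemma \ref{CHY lemma} to get the lower bound $w_0\ge -C$, and finally unwinding the conformal factor. Your write-up just makes explicit the step from the qualitative blow-up near $\partial\Omega$ plus continuity on compact sets to the global lower bound, which the paper's proof states more tersely.
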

	\begin{proof}
	Using  \eqref{scalar curvature} and our assumptions, there exists  $R_0>0$ such that for any $|x|\geq R_0$, one has
		$$\Delta e^{\frac{n-2}{2}u}\leq 0.$$
	For $|x|\geq R_0$, consider the function $$w_0(x):=e^{\frac{n-2}{2}u(x)}-c_1|x|^{2-n}$$
	where $c_1=R_0^{n-2}\min_{|x|=R_0}e^{\frac{n-2}{2}u(x)}$. Thus, one has $w_0(x)\geq 0$ on $\partial B_{R_0}(0)$. It is not hard to see that $$\lim_{|x|\to\infty}\inf w_0(x)\geq 0.$$ Thus, with the help of maximum principle, one must have 
	$$w_0(x)\geq 0, \quad |x|\geq R_0$$
	which yields that 
	$$u(x)\geq -2\log(|x|+1)-C.$$
	\end{proof}

In fact, for a complete metric, a lower bound for $u$ can be established under weaker assumptions, such as the scalar curvature being bounded below or the negative part of the scalar curvature having finite $L^p$ norm. For details, see Proposition 8.1 in \cite{CHY}, Lemma 3.3 in Chapter 6 of \cite{Schoen-Yau}, and Lemma 3.1 in \cite{MQ adv}.

	Building upon Propositions \ref{prop:baru-upper-bound} and \ref{prop:bar u lower bound}, we establish a growth estimate for $|u|$, which will serve as the foundation for applying the super polyharmonic property developed in \cite{WX}.
	\begin{lemma}\label{lem:intB_r|u|}
		Given a  conformal metric $g=e^{2u}|\ud x|^2$ on $\mr^n$ where  $n\geq 3$. If the scalar curvature $R_g\geq 0$ outside  a compact set,  for $r\gg1$, there holds
		$$\fint_{B_r(0)}|u|\ud x\leq C\log r.$$
	\end{lemma}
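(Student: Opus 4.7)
\medskip

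\noindent\textbf{Proof plan.} The plan is to combine the two preceding propositions: Proposition~\ref{prop:baru-upper-bound} controls $\overline{u^+}(r)$ from above, while Proposition~\ref{prop:bar u lower bound} controls $u$ (and hence $u^-$) from below by $-2\log(|x|+1)-C$. Integrating these pointwise/spherical bounds over $B_r(0)$ then yields the claimed logarithmic growth.

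\medskip

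\noindent\textbf{Step 1: verify hypotheses of the two propositions.} Since $u$ is smooth, $R_g$ is smooth, and the hypothesis $R_g \geq 0$ outside some compact set $K$ forces the existence of a constant $C_0>0$ with $R_g\geq -C_0$ globally. Thus Proposition~\ref{prop:bar u lower bound} applies and gives
\begin{equation*}
u(x)\geq -2\log(|x|+1)-C.
\end{equation*}
Moreover, $(R_g)^-$ is compactly supported in $K$ and bounded, and $u$ is bounded on $K$, so
\begin{equation*}
\int_{\mr^n}(R_g^-)^{\frac{2n}{n+2}}e^{nu}\,\ud x <+\infty,
\end{equation*}
i.e.\ $R_g^-\in L^{\frac{2n}{n+2}}(\mr^n,g)$. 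Hence Proposition~\ref{prop:baru-upper-bound} applies and yields $\overline{u^+}(r)\leq C$ for all $r\geq 0$.

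\medskip

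\noindent\textbf{Step 2: split and integrate.} Write $|u|=u^++u^-$ and estimate each average separately. Using polar coordinates and Step 1,
\begin{equation*}
\fint_{B_r(0)}u^+\,\ud x \;=\; \frac{n}{r^n}\int_0^r s^{n-1}\,\overline{u^+}(s)\,\ud s \;\leq\; C.
\end{equation*}
For the negative part, the pointwise lower bound from Step~1 gives $u^-(x)\leq 2\log(|x|+1)+C$, so
\begin{equation*}
\fint_{B_r(0)}u^-\,\ud x \;\leq\; \fint_{B_r(0)}\bigl(2\log(|x|+1)+C\bigr)\,\ud x \;\leq\; C\log r
\end{equation*}
for $r\gg 1$. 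Adding these two estimates produces $\fint_{B_r(0)}|u|\,\ud x\leq C\log r$, which is the desired bound.

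\medskip

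\noindent\textbf{Expected difficulties.} This argument is essentially bookkeeping on top of Propositions~\ref{prop:baru-upper-bound} and \ref{prop:bar u lower bound}, so there is no genuine analytical obstacle; the only point requiring a little care is checking that the global lower bound $R_g\geq -C_0$ and the $L^{\frac{2n}{n+2}}(\mr^n,g)$ bound on $R_g^-$ both follow from the combination of smoothness of $u$ and non-negativity of $R_g$ outside a compact set, which Step~1 does.
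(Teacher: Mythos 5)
Your proposal is correct and takes essentially the same route as the paper: combine Proposition~\ref{prop:bar u lower bound} to bound $u^-$ by $2\log(|x|+1)+C$ and Proposition~\ref{prop:baru-upper-bound} to bound $\overline{u^+}(r)$, then integrate. Your Step~1, which explicitly checks that $R_g\geq 0$ outside a compact set implies both the global lower bound $R_g\geq -C_0$ and $R_g^-\in L^{\frac{2n}{n+2}}(\mr^n,g)$, is a useful verification that the paper leaves implicit.
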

	\begin{proof}
		Proposition \ref{prop:bar u lower bound} provides the lower bound control
		\begin{equation}\label{u^- leq Clog r}
			u^-(x)\leq 2\log(|x|+1)+C.
		\end{equation}
		Combining  the above estimate  \eqref{u^- leq Clog r} with Proposition \ref{prop:baru-upper-bound},  we derive that 
		$$\int_{B_r(0)}|u|\ud x= \int_{B_r(0)}u^+\ud x+2\int_{B_r(0)}u^-\ud x\leq Cr^n+Cr^n\log (r+1)$$
		which consequently yields that 
		$$\fint_{B_r(0)}|u|\ud x\leq C\log r, \quad r\gg 1.$$
	\end{proof}

	The following lemma is a crucial trick developed in \cite{WX}. For the reader's convenience, we sketch the proof by exactly following the argument  of Lemma 2.1 in \cite{WX}.
	
	\begin{lemma}\label{lem:WX trick}
		For any integer $p\geq 2$, consider  $v\in C^{2p}(\mr^n)$ satisfying 
		$(-\Delta)^pv\geq 0.$  Suppose that 
		\begin{equation}\label{bar B_r v=or^2}
			\fint_{B_r(0)}|v|\ud x=o(r^{2}).
		\end{equation}
		Then,  for any integer  $1\leq i\leq p-1$, we have 
		$$(-\Delta)^iv\geq 0.$$ 
	\end{lemma}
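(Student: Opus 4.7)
My plan is a downward induction on $i$ from $p-1$ to $1$: at each step, assuming $w_{i+1} := (-\Delta)^{i+1}v \geq 0$ everywhere (which holds for $i+1 = p$ by hypothesis, and is the inductive conclusion for smaller $i$), I would show $w_i := (-\Delta)^i v \geq 0$ everywhere. Fix $x_0 \in \mathbb{R}^n$ and set $\bar{w}_i(r) := \fint_{\partial B_r(x_0)} w_i\,d\sigma$. Since the Laplacian commutes with spherical averaging, one has
$$
(r^{n-1}\bar{w}_i'(r))' = -r^{n-1}\bar{w}_{i+1}(r), \qquad \bar{w}_i'(0) = 0.
$$
Moreover, the growth hypothesis persists under translation: since $B_r(x_0) \subset B_{r+|x_0|}(0)$, one has $\fint_{B_r(x_0)}|v|\,dx = o(r^2)$ for every fixed $x_0$.

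Using $\bar{w}_{i+1} \geq 0$, the identity above forces $r^{n-1}\bar{w}_i'$ to be non-increasing starting from $0$, so $\bar{w}_i$ is non-increasing. I would then argue by contradiction: suppose $\bar{w}_i(r_0) < 0$ at some $r_0 > 0$, which gives $\bar{w}_i(r) \leq -c < 0$ for all $r \geq r_0$. Iterating the recurrence $i$ times downward from $\bar{w}_i$ to $\bar{w}_0 = \bar{v}$, I expect a sub-induction on $k = 1, \ldots, i$ to yield $|\bar{w}_{i-k}(r)| \geq C_k r^{2k}$ for $r$ large, with alternating signs and a gain of two powers of $r$ at each integration. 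In particular $|\bar{v}(r)| \geq C r^{2i} \geq C r^2$ for $r$ large, and by Fubini
$$
\fint_{B_r(x_0)}|v|\,dx \geq \frac{n}{r^n}\int_0^r s^{n-1}|\bar{v}(s)|\,ds \geq C r^{2i},
$$
contradicting $\fint_{B_r(x_0)}|v|\,dx = o(r^2)$ since $p \geq 2$ forces $i \geq 1$. Thus $\bar{w}_i(r) \geq 0$ for all $r > 0$ and all $x_0$.

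Finally I would upgrade to a pointwise statement. Since $-\Delta w_i = w_{i+1} \geq 0$, the function $w_i$ is superharmonic, so the super-mean-value inequality gives
$$
w_i(x_0) \geq \fint_{\partial B_r(x_0)} w_i\,d\sigma \geq 0
$$
for every $x_0 \in \mathbb{R}^n$. This completes the inductive step, and hence the proof.

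The main obstacle is the quantitative sub-induction that produces the power-law lower bound on $|\bar{v}|$: at each of the successive integrations, one must verify that the boundary contributions at $r_0$ and intermediate radii are absorbed into lower-order remainders, so that the leading $r^{2k}$ behaviour propagates cleanly from $\bar{w}_i$ all the way to $\bar{v}$. Since we only need to beat $r^2$ at the end and even a single descent already gains $r^2$, the estimate is tight but comfortable; everything else reduces to standard radial-ODE manipulations combined with the super-mean-value inequality.
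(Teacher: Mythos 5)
Your proposal is correct and is essentially the paper's own proof: the paper also argues by contradiction, exploits the same radial ODE $(r^{n-1}\bar w_i')' = -r^{n-1}\bar w_{i+1}$ to force $\bar w_{p-1}$ non-increasing, performs the same ``inductive integration'' to obtain a lower bound $|\bar v(r)| \gtrsim r^{2(p-1)}$, contradicts the $o(r^2)$ growth assumption, and then wraps the whole argument in a standard induction. One small plus in your write-up: you explicitly note that the growth hypothesis $\fint_{B_r(0)}|v|\,dx = o(r^2)$ transfers to balls about any $x_0$ via $B_r(x_0)\subset B_{r+|x_0|}(0)$, a point the paper passes over silently with its ``WLOG $x_0=0$''; also your final appeal to the super-mean-value inequality, while harmless, is unnecessary since $\bar w_i(r)\to w_i(x_0)$ as $r\to 0^+$ already gives the pointwise conclusion once $\bar w_i(r)\geq 0$ for all $r>0$.
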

	\begin{proof}
		Set $v_i:=(-\Delta)^iv$ for $1\leq i\leq p-1$ for brevity.
		We argue by contradiction. Suppose that there exists $x_0\in \mr^n$ such that
		$$v_{p-1}(x_0)<0.$$
		Without loss of generality, one may assume that $x_0=0.$ 
		Notice that 
		$$(-\Delta)\bar v=\bar v_1, \quad (-\Delta)\bar v_{i}=\bar v_{i+1}. $$
		and
		$$(-\Delta)\bar v_{p-1}(r)=\overline{(-\Delta)^p v}(r)\geq 0$$
		which is equivalent to
		$$(r^{n-1}\bar v_{p-1}(r)')'\leq 0.$$
		Integrating it over the integer $[0,r]$, one has
		\begin{equation}\label{bar v_p-1 leq 0}
			r^{n-1}\bar v_{p-1}(r)'\leq 0.
		\end{equation}
		Using $\bar v_{p-1}(0)<0$ and \eqref{bar v_p-1 leq 0}, there holds
		$$\bar v_{p-1}(r)\leq \bar v_{p-1}(0)<0.$$
		By inductive integration, one has 
		$$(-1)^{p-1}\bar v(r)\geq c_0r^{2(p-1)}, \quad r\geq r_{p-1}$$
		for some positive constant $r_{p-1}$ and $c_0>0.$
		Then, for $r\gg1$, there holds
		$$(-1)^{p-1}\int_{B_r(0)}v\ud x=\int^r_0|\partial B_r(0)|(-1)^{p-1}\bar v(r)\ud r\geq C+Cr^{n+2(p-1)}.$$
		Using the assumptions $p\geq 2$ and \eqref{bar B_r v=or^2}, we obtain the contradiction.
		By the standard mathematical induction, we finish our proof.
	\end{proof}
	
	A Liouville type theorem for polyharmonic functions is a direct corollary of such lemma.
	\begin{corollary} \label{cor:Liouville}
		For any integer $p\geq 1$, consider $v\in C^{2p}(\mr^n)$ satisfying $\Delta^pv=0$. Suppose that 
		$$\fint_{B_r(0)}|v|\ud x=o(r).$$
		Then $v$ must be a constant.
	\end{corollary}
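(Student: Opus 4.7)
The plan is to split the argument by the value of $p$, reducing the polyharmonic case to the harmonic one and then invoking a classical gradient estimate.

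For $p \geq 2$, the key trick is to feed Lemma \ref{lem:WX trick} both signs of $v$. Since $\Delta^p v = 0$, we simultaneously have $(-\Delta)^p v \geq 0$ and $(-\Delta)^p(-v) \geq 0$. The hypothesis $\fint_{B_r(0)}|v|\,\ud x = o(r)$ trivially implies the $o(r^2)$ bound required by the lemma, so applying it to $v$ and to $-v$ separately yields both $(-\Delta)^i v \geq 0$ and $(-\Delta)^i v \leq 0$ for each $1 \leq i \leq p-1$. Consequently all intermediate iterated Laplacians vanish identically; in particular $\Delta v \equiv 0$, reducing the problem to the $p=1$ case.

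Once $v$ is harmonic on $\mr^n$, I would invoke the standard $L^1$ gradient estimate
$$|\nabla v(x)| \leq \frac{C(n)}{r}\fint_{B_r(x)}|v|\,\ud y,$$
obtained from the mean value property applied to $\partial_i v$. For $r \geq |x|$, the inclusion $B_r(x) \subset B_{2r}(0)$ gives $|\nabla v(x)| \leq C(n) r^{-1}\cdot 2^n \fint_{B_{2r}(0)}|v|\,\ud y = o(1)$ as $r \to \infty$ by the growth hypothesis. Hence $\nabla v \equiv 0$ and $v$ must be a constant.

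There is no serious obstacle here: the corollary is an essentially immediate consequence of Lemma \ref{lem:WX trick}. The only conceptual step is recognizing that the lemma must be applied to both $v$ and $-v$ in order to pin down the intermediate iterated Laplacians, after which the problem collapses to the well-known fact that a harmonic function on $\mr^n$ with sublinear average growth is constant.
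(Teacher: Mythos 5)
Your proof follows essentially the same route as the paper: apply Lemma \ref{lem:WX trick} to both $v$ and $-v$ to force all intermediate Laplacians to vanish, then conclude by Liouville for harmonic functions with sublinear average growth. You are simply more explicit than the paper in spelling out the $\pm v$ trick and in supplying the $L^1$ gradient estimate that justifies the harmonic Liouville step, which the paper cites as ``classical.''
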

	\begin{proof}
		Using Lemma \ref{lem:WX trick}, one must have
		$\Delta v=0.$ Then, the classical Liouville theorem for harmonic functions deduce that $v$ must be a constant.
	\end{proof}
	\begin{remark}
		In fact, this is a very classical result and is well-known among experts. We are grateful to Q. Ngô for bringing Lemma 3.4 in their work \cite{NNPY} to our attention and for pointing out that, even when the function is  sup-polyharmonic, a Liouville-type theorem still holds under additional assumptions on $v$.
	\end{remark}
	
	As mentioned earlier, a criterion for normal metrics was established in \cite{CQY}. In what follows, we present an alternative proof of this result, based on the growth estimates for $|u|$ derived above.
	\begin{lemma}\label{lem: normal metric}
		Given a  conformal metric $g=e^{2u}|dx|^2$ on $\mr^n$ where $n\geq 4$ is an even integer with finite total $Q^{(n)}_g$-curvature. If the scalar curvature $R_g$  is non-negative outside a compact set, then $g$ is a normal metric.
	\end{lemma}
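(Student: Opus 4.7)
The plan is to define the ``defect'' function $v(x) := u(x) - \mathcal{L}(Q^{(n)}_g e^{nu})(x)$ and to show that $v$ is constant; this is exactly the statement that $g$ is a normal metric. Since $u$ satisfies \eqref{nth order Q} and $\mathcal{L}(f)$ satisfies \eqref{(-delta)^n/2 L(f)} with $f = Q^{(n)}_g e^{nu}$, which is in $L^1(\mr^n)$ by the finite total $Q^{(n)}_g$-curvature hypothesis, we have immediately
\[
(-\Delta)^{\frac{n}{2}} v = 0 \quad \text{in } \mr^n.
\]
So $v$ is polyharmonic of order $n/2$ on $\mr^n$, and the task reduces to verifying the hypothesis $\fint_{B_r(0)}|v|\,\ud x = o(r)$ of Corollary \ref{cor:Liouville}.

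First I would control $\fint_{B_r(0)}|u|\,\ud x$. This is already in hand: Lemma \ref{lem:intB_r|u|} gives $\fint_{B_r(0)}|u|\,\ud x \leq C\log r$ for $r\gg 1$, using only $R_g \geq 0$ outside a compact set together with the lower bound from Lemma \ref{CHY lemma}. Next I would bound $\fint_{B_r(0)}|\mathcal{L}(f)|\,\ud x$. The decomposition \eqref{u =-aplha log x1} from the proof of Lemma \ref{lem:barLf} yields the pointwise estimate
\[
|\mathcal{L}(f)(x)| \leq C\log(|x|+1) + \frac{2}{(n-1)!|\mathbb{S}^n|}\int_{B_1(x)}\log\frac{1}{|x-y|}|f(y)|\,\ud y.
\]
Integrating over $B_r(0)$ and applying Fubini's theorem to the singular integral part (swapping the order of integration and using that $\int_{B_1(y)}\log\frac{1}{|x-y|}\,\ud x$ is a fixed finite constant), I get
\[
\int_{B_r(0)}|\mathcal{L}(f)|\,\ud x \leq C r^n \log(r+1) + C\|f\|_{L^1(\mr^n)},
\]
so $\fint_{B_r(0)}|\mathcal{L}(f)|\,\ud x \leq C\log r$ for $r \gg 1$.

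Combining these two estimates via the triangle inequality gives $\fint_{B_r(0)}|v|\,\ud x \leq C\log r = o(r)$, which is precisely the hypothesis of Corollary \ref{cor:Liouville} with $p = n/2$. Therefore $v$ is a constant, and unwinding the definition yields $u(x) = \mathcal{L}(Q^{(n)}_g e^{nu})(x) + C$, i.e., $g$ is a normal metric in the sense of \eqref{normal-metric-def}.

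The main technical point is the $L^1$-in-ball estimate for the logarithmic potential $\mathcal{L}(f)$; Lemma \ref{lem:barLf} only gives information on the spherical average $\overline{\mathcal{L}(f)}$, whereas Corollary \ref{cor:Liouville} requires a ball-average bound on $|\mathcal{L}(f)|$. The Fubini trick on the near-diagonal piece $\int_{B_1(x)}\log\frac{1}{|x-y|}|f(y)|\,\ud y$ is the step that bridges this gap, and it is the only place one needs to work beyond quoting the already-established tools. Once that is in place, the conclusion follows from the super-polyharmonic Liouville principle of Lemma \ref{lem:WX trick} applied to both $v$ and $-v$.
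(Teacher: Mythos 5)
Your proof follows essentially the same route as the paper: define the defect $v = u - \mathcal{L}(Q^{(n)}_g e^{nu})$, observe it is polyharmonic of order $n/2$, bound $\fint_{B_r(0)}|v|$ by $C\log r$ using Lemma \ref{lem:intB_r|u|} for the $u$-part and the logarithmic potential estimates for the other part, and invoke Corollary \ref{cor:Liouville}. The one place you go beyond the paper is a genuine improvement in rigor: the paper simply cites Lemma \ref{lem:barLf} for $\fint_{B_r(0)}|\mathcal{L}(f)|\,dx \leq C\log r$, even though that lemma only controls the spherical average $\overline{\mathcal{L}(f)}(r)$ without absolute values; your explicit Fubini argument on the near-diagonal singular piece $\int_{B_1(x)}\log\frac{1}{|x-y|}|f(y)|\,dy$ supplies the missing justification and is correct.
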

	\begin{proof}
		For brevity, 	we introduce  the notation
		$$a(x):=u(x)-\mathcal{L}(Q^{(n)}_ge^{nu})(x).$$
		By construction,  $a(x)$ is a polyharmonic function satisfying
		\begin{equation}\label{polyharmonic a}
			(-\Delta)^{\frac{n}{2}}a(x)=0.
		\end{equation}
		Applying Lemma \ref{lem:barLf},  we obtain the growth estimate
		$$	\fint_{B_r(0)}|\mathcal{L}(Q^{(n)}_ge^{nu})|\ud x\leq C\log r, \quad \text{for } r \gg 1.$$
		Combining this with Lemma \ref{lem:intB_r|u|}, one has
		\begin{equation}\label{int B_r|a|}
			\fint_{B_r(0)}|a(x)|\ud x\leq C\log r, \quad \text{for } r \gg 1.
		\end{equation}
		The growth estimate \eqref{int B_r|a|} together with the Liouville theorem for polyharmonic functions in  Corollary \ref{cor:Liouville} applied to \eqref{polyharmonic a} implies that $a(x)$ must be constant. We therefore conclude that the metric $g$ is normal.
	\end{proof}

Before completing the proof of Theorem \ref{thm: int-Q^n-between 0,1}, we first establish the following result, which does not require the metric to be complete.
		\begin{theorem}\label{thm: int-Q^n-between 0,2}
		Let $g=e^{2u}|dx|^2$ be a  conformal metric on $\mr^n$  with $n\geq 4$ an even integer.  Suppose that its scalar curvature $R_g$ is non-negative outside a compact set and the negative part of $nth$-order $Q^{(n)}_g$-curvature is integrable over $(\mr^n,g)$. Then, the total $Q^{(n)}_g$-curvature is controlled precisely by 
		\begin{equation*}
			0\leq \int_{\mr^n}Q^{(n)}_g\ud \mu_g\leq (n-1)!|\mathbb{S}^n|
		\end{equation*}
		where $|\mathbb{S}^n|$ denotes the volume of standard n-sphere and $\ud\mu_g=e^{nu}\ud x$.
	\end{theorem}
	\begin{proof}
	For any fixed radius $r_1 > 0$, we construct a smooth cut-off function $\eta_{r_1} : \mathbb{R}^n \to [0,1]$ satisfying:
	\begin{itemize}
		\item $\eta_{r_1} \equiv 1$ on the ball $B_{r_1}(0),$
		\item $\mathrm{supp} (\eta_{r_1}) \subset B_{r_1+1}(0)$.
	\end{itemize}
	Based on our assumptions, we now  introduce three key functions:
	\begin{align*}
		u_1(x) &:= \mathcal{L}\left((Q_g^{(n)})^- e^{nu}\right)(x), \\
		u_2(x) &:= \mathcal{L}\left(\eta_{r_1} (Q_g^{(n)})^+ e^{nu}\right)(x), \\
		u_3(x) &:= u(x) + u_1(x) - u_2(x).
	\end{align*}
	These definitions immediately yield the polyharmonic inequality:
	\begin{equation}\label{Delta n/2u_3 geq 0}
		(-\Delta)^{\frac{n}{2}} u_3 = (1-\eta_{r_1})(Q_g^{(n)})^+ e^{nu} \geq 0.
	\end{equation}
	Using  Propositions \ref{prop:baru-upper-bound},  \ref{prop:bar u lower bound} and Lemma \ref{lem:barLf},  one has 
	\begin{equation*}
		|\bar u_3(r)|\leq |\bar u(r)|+|\bar u_1(r)|+|\bar u_2(r)| \leq C\log (r+2).
	\end{equation*}
	As a direction computation, for $r\gg1$, there holds
	\begin{equation}\label{int B_r u_3growth}
		\int_{B_r(0)}|u_3(x)|\ud x=\int^r_0|\partial B_s(0)|\cdot|\bar u_3(s)|\ud s\leq Cr^n\log r .
	\end{equation}
	With the help of \eqref{Delta n/2u_3 geq 0} and \eqref{int B_r u_3growth}, Lemma \ref{lem:WX trick} shows that 	
	\begin{equation*}
		(-\Delta)^{i} u_3\geq 0, \quad 1\leq i\leq \frac{n}{2}-1.
	\end{equation*}
	In particular, $u_3$ is a superharmonic 	function satisfying
	$$-\Delta u_3\geq 0.$$
	Using the 	mean value property for superharmonic functions,  for all $r>0$, there holds
	\begin{equation}\label{bar u_3 upper bound}
		u_3(0)\geq\fint_{\partial B_{r}(0)}u_3\ud\sigma= \bar u_3(r).
	\end{equation}
	Notice that $u_3(0)=u(0).$	Now, using the estimate  \eqref{bar u_3 upper bound} and Lemma \ref{lem:barLf}, for $r\gg1$,  one has
	\begin{align*}
		&\bar u(r)\\
		=&\bar u_2(r)-\bar u_1(r)+\bar u_3(r)\\
		\leq &\bar u_2(r)-\bar u_1(r)+u(0).
	\end{align*}
	For $r\gg1$, dividing $\log r$ on both sides, we obtain that 
	\begin{equation}\label{bar u leq bar u_2-baru_1}
		\frac{\bar u(r)}{\log r}\leq \frac{\bar u_2(r)}{\log r}-\frac{\bar u_1(r)}{\log r}+\frac{u(0)}{\log r}.
	\end{equation}
	With the help of Proposition \ref{prop:bar u lower bound},  one has
	\begin{equation}\label{bar u inf}
		\lim_{r\to\infty}\inf\frac{\bar u(r)}{\log r}\geq -2.
	\end{equation}
	Combining these two estimates \eqref{bar u leq bar u_2-baru_1}, \eqref{bar u inf} and letting $r\to\infty$, Lemma \ref{lem:barLf} yields  that
	$$-2\leq -\frac{2}{(n-1)!|\mathbb{S}^n|}\int_{\mr^n}\eta_{r_1}(Q^{(n)}_g)^+e^{nu}\ud x+\frac{2}{(n-1)!|\mathbb{S}^n|}\int_{\mr^n}(Q^{(n)}_g)^-e^{nu}\ud x$$
	which is equivalent to
	$$\frac{2}{(n-1)!|\mathbb{S}^n|}\int_{\mr^n}\eta_{r_1}(Q^{(n)}_g)^+e^{nu}\ud x\leq \frac{2}{(n-1)!|\mathbb{S}^n|}\int_{\mr^n}(Q^{(n)}_g)^-e^{nu}\ud x+2.$$	
	By letting $r_1\to\infty$, we finally obtain that  
	$$\frac{2}{(n-1)!|\mathbb{S}^n|}\int_{\mr^n}(Q^{(n)}_g)^+e^{nu}\ud x \leq  \frac{2}{(n-1)!|\mathbb{S}^n|}\int_{\mr^n}(Q^{(n)}_g)^-e^{nu}\ud x+2$$ 
	which deduces that 
	\begin{equation*}
		\int_{\mr^n}Q_g^{(n)}e^{nu}\ud x\leq (n-1)!|\ms^n|.
	\end{equation*}
	Then,  Lemma \ref{lem: normal metric} yields that $g$ is a normal metric which means that 
	$$u(x)=\mathcal{L}(Q^{(n)}_ge^{nu})(x)+C.$$
	Combining Lemma \ref{lem:barLf} with Proposition \ref{prop:baru-upper-bound},  for $r\gg1$, one has
	$$\left(-\frac{2}{(n-1)!|\mathbb{S}^n|}\int_{\mr^n}Q_g^{(n)}e^{nu}\ud x+o(1)\right)\log r\leq  C$$
	which implies that 
	\begin{equation*}
		\int_{\mr^n}Q_g^{(n)}e^{nu}\ud x\geq 0.
	\end{equation*}
	Thus,    one has 
	\begin{equation*}
		0\leq\frac{2}{(n-1)!|\mathbb{S}^n|}\int_{\mr^n}Q_g^{(n)}e^{nu}\ud x \leq 2.
	\end{equation*}
	
Therefore, we finish our proof.
\end{proof}
		For brevity, set the notation $\alpha_0$  throughout this paper as follows 
	\begin{equation}\label{alpha_0}
		\alpha_0:=\frac{2}{(n-1)!|\mathbb{S}^n|}\int_{\mr^n}Q_g^{(n)}e^{nu}\ud x.
	\end{equation}
	
	 Choosing $Q^{(n)}_g=(n-1)!$ in the equation \eqref{nth order Q}, one has
	\begin{equation}\label{constant Q}
		(-\Delta)^{\frac{n}{2}}u=(n-1)!e^{nu}.
	\end{equation}
For each $\lambda>0$ and $x_0\in \mr^n$, the following family solutions
\begin{equation}\label{standard solutions}
	u(x)=\log\frac{2\lambda}{\lambda^2+|x-x_0|^2}
\end{equation}
satisfies the equation \eqref{constant Q}.

For \( n = 2 \), under the finite volume assumption i.e. $e^{2u}\in L^1(\mr^2)$, Chen and Li \cite{CL} showed that all solutions to \eqref{constant Q} take the form \eqref{standard solutions}. However, for \( n \geq 4 \), the finite volume condition alone is no longer sufficient to guarantee such a conclusion. In \cite{Lin, WX}, solutions of \eqref{constant Q} were classified under the assumptions \( e^{nu} \in L^1(\mathbb{R}^n) \) and \( u(x) = o(|x|^2) \). In \cite{Chang-Yang MRL} (see line 10, page 94), Chang and Yang raised the question of whether there exist natural geometric conditions under which functions satisfying \eqref{constant Q} must be of the form \eqref{standard solutions}. Under the finite volume assumption and the condition that the scalar curvature is non-negative outside a compact set, it follows from the result of Chang, Qing, and Yang \cite{CQY} and \cite{Chang-Yang MRL, Xu xw} that every solution to \eqref{constant Q} must indeed be of the form \eqref{standard solutions}(see also \cite{Marti}).

With the aid of Theorem \ref{thm: int-Q^n-between 0,2}, we are able to remove the finite volume assumption, since it is automatically satisfied under the scalar curvature restrictions—a feature that differs from the two-dimensional case.

\begin{theorem}
	For even integer $n\geq 4$.
	If the scalar curvature of the conformal metric $g=e^{2u}|dx|^2$ on $\mr^n$ is non-negative outside a compact set, functions satisfying \eqref{constant Q} are necessarily of the form \eqref{standard solutions}.
\end{theorem}
\begin{proof}
	With the help of Theorem \ref{thm: int-Q^n-between 0,2}, one has
	$e^{nu}\in L^1(\mr^n).$ Moreover, due to Lemma \ref{lem: normal metric}, the solutions to \eqref{constant Q} satisfy the integral equation
	$$u(x)=\frac{2}{(n-1)!|\mathbb{S}^n|}\int_{\mr^n}\log\frac{|y|}{|x-y|}(n-1)!e^{nu(y)}\ud y+C.$$
	With the help of the classification theorem in \cite{Chang-Yang MRL, Xu xw} for integral equations, we finish the proof.
\end{proof}

		Now, we are going to finish this section by  providing the proof of Theorem \ref{thm: int-Q^n-between 0,1}.
	
	\hspace{3em}
	
	\noindent {\bf Proof of Theorem \ref{thm: int-Q^n-between 0,1}:}
	From Theorem \ref{thm: int-Q^n-between 0,2}, one has $Q_g^{(n)}e^{nu}\in L^1(\mathbb{R}^n)$ and $0\leq \alpha_0\leq 2.$
		Now, the statement $\alpha_0\leq 1$ follows from Theorem \ref{thm:CQY-thm} since the isoperimetric ratio is always non-negative. \qed

	\section{Positivity of $Q$-curvatures and Ricci curvature}\label{sect:positive-Q_4}
	
	From the foundational work of \cite{SY Inv}, it is known that non-negative scalar curvature imposes significant geometric and topological constraints on locally conformally flat manifolds. This observation naturally motivates the study of non-negativity conditions for other curvature quantities.
	Broadly speaking, two main directions have emerged in this line of research:
	\begin{enumerate}
		\item The study of the $kth$ elementary symmetric functions of the Schouten tensor, as explored in \cite{CHY}, \cite{G}, and \cite{GLW}.
		\item 
		Investigations concerning $Q$-curvature, as seen in \cite{Chang-Han-Yang}, \cite{CHY}, \cite{GMS}, \cite{Wang Fang}, and \cite{Zhang}.
	\end{enumerate}
	More topics related to locally conformally flat manifolds can be found in Chapter 6 in \cite{Schoen-Yau}.
	
	In this section, we focus on the second perspective.	Building on the  work \cite{LX}, we intend to show that under the assumptions of  non-negativity of $Q^{(n)}_g$ and $R_g$, the lower order $Q$-curvatures are also non-negative.  Such result will play a crucial role in the proof of gap theorem for $Q^{(2k)}_g$-curvatures.   Due to the technique limitation, we are only able to show $Q^{(4)}_g\geq 0$ currently, although we conjectured that $Q^{2k}_g\geq 0$ for all $ 1\leq k \leq \frac{n-2}{2}$.
	
	Since the fundamental work of Kuiper 
	
	\begin{theorem}\label{thm:positive Q_4}
		Let $g=e^{2u}|dx|^2$ on $\mr^n$  with $n\geq 4$  an even integer be a conformal metric. Suppose that  nth-order Q-curvature  $Q^{(n)}_g$  is  non-negative and  the   scalar curvature $R_g$ is non-negative outside a compact set.  Then,  for any integer  $1\leq k\leq \min\{2,\frac{n-2}{2}\}$, there holds
		$$Q^{(2k)}_g>0\quad \mathrm{or}\quad Q^{(2k)}_g\equiv 0.$$
	\end{theorem}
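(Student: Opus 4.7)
The plan is to exploit the normal-metric representation together with Cauchy--Schwarz-type estimates on explicit integral formulas for $u$ and its derivatives; the total $Q$-curvature bound from Theorem \ref{thm: int-Q^n-between 0,1} plays the role of the decisive normalization constant. Since $Q^{(n)}_g \geq 0$ and $R_g \geq 0$ outside a compact set, Lemma \ref{lem: normal metric} (invoking the total curvature bound) gives $u(x) = \mathcal{L}(f)(x) + C$ with $f := Q^{(n)}_g e^{nu} \geq 0$ satisfying $\|f\|_{L^1} \leq (n-1)!|\ms^n|/2$. Differentiating under the integral, using $\Delta_x \log|x-y| = (n-2)/|x-y|^2$, yields for every $1 \leq j \leq n/2-1$ the pointwise representation
\[
(-\Delta)^j u(x) = c_{n,j}\int_{\mr^n} \frac{f(y)}{|x-y|^{2j}}\ud y, \qquad c_{n,j}>0,
\]
together with the bound $|\nabla u(x)| \leq \tfrac{2}{(n-1)!|\ms^n|}\int f(y)/|x-y|\,\ud y$ and analogous bounds for higher gradients. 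These in particular recover the sign conclusion of Lemma \ref{lem:WX trick}.

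For $k=1$, a direct calculation gives $-\Delta e^{\frac{n-2}{2}u} = \tfrac{n-2}{2}\,e^{\frac{n-2}{2}u}\bigl[(-\Delta u) - \tfrac{n-2}{2}|\nabla u|^2\bigr]$. Cauchy--Schwarz applied to the integral formula for $\nabla u$ yields $|\nabla u|^2 \leq \bigl(\tfrac{2}{(n-1)!|\ms^n|}\bigr)^2 \|f\|_{L^1}\int f(y)/|x-y|^2\,\ud y$, so that after substituting the integral formula for $-\Delta u$ the bracket factors explicitly as
\[
\tfrac{2(n-2)}{((n-1)!|\ms^n|)^2}\bigl[(n-1)!|\ms^n| - \|f\|_{L^1}\bigr]\int_{\mr^n}\frac{f(y)}{|x-y|^2}\,\ud y \geq 0,
\]
which is non-negative thanks to $\|f\|_{L^1} \leq (n-1)!|\ms^n|/2$. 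Hence $Q^{(2)}_g \geq 0$. For the dichotomy, split on whether $f \equiv 0$: in the first case $(-\Delta)^{n/2}u = 0$, the $O(\log r)$ growth of $\fint_{B_r}|u|\ud x$ together with Corollary \ref{cor:Liouville} forces $u$ to be constant and so $Q^{(2)}_g \equiv 0$; in the second case both factors in the last display are strictly positive everywhere, so $Q^{(2)}_g > 0$ pointwise.

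For $k=2$ (so $n \geq 6$ and $a := (n-4)/2 > 0$), expanding $\Delta^2 e^{au}$ with the product rule gives
\[
\Delta^2 e^{au} = e^{au}\Bigl[a^2(\Delta u + a|\nabla u|^2)^2 + 4a^2\,\nabla u \cdot \nabla \Delta u + 4a^3(\nabla u)^{\mathrm T}(\nabla^2 u)\nabla u + a\,\Delta^2 u + 2a^2|\nabla^2 u|^2\Bigr].
\]
The first, fourth, and fifth bracketed terms are manifestly non-negative after invoking $\Delta^2 u \geq 0$, while the second and third are indefinite. The plan is to bound the two indefinite cross terms by Cauchy--Schwarz applied to the integral representations of $\nabla u$, $\nabla \Delta u$, and $\nabla^2 u$, and to absorb the resulting positive majorants into the three explicit non-negative terms; as in the case $k=1$, the critical input is the strict inequality $\|f\|_{L^1} < (n-1)!|\ms^n|$ ensured by Theorem \ref{thm: int-Q^n-between 0,1}. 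This yields $Q^{(4)}_g \geq 0$, and the same split $f \equiv 0$ versus $f \not\equiv 0$ gives the dichotomy.

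The main obstacle is the algebraic bookkeeping in the $k=2$ expansion: unlike $k=1$ where a single completion of the square suffices, one must choose the right weights in several simultaneous Cauchy--Schwarz estimates so that the two indefinite terms do not overwhelm the positive ones, and this balancing succeeds only because of the sharp bound $\|f\|_{L^1} \leq (n-1)!|\ms^n|/2$. For $k \geq 3$ the number of indefinite high-order cross terms appears to exceed what Cauchy--Schwarz coupled with this single integral bound can absorb, which explains the restriction $k \leq 2$ and motivates the conjecture that the analogous positivity should nevertheless hold for all $1 \leq k \leq (n-2)/2$.
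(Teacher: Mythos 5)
Your $k=1$ argument is sound, and in fact essentially the same as the paper's: the Cauchy--Schwarz bound
\[
|\nabla u(x)|^2 \leq \frac{\alpha_0}{n-2}(-\Delta u)(x)
\]
is precisely what the paper records (after Fubini and the parallelogram identity $2(x-y)\cdot(x-z)=|x-y|^2+|x-z|^2-|y-z|^2$) in the form $|\nabla u|^2 = \tfrac{\alpha_0}{n-2}(-\Delta u) - b$ with the explicit defect
\[
b(x)=\int_{\mr^n}\int_{\mr^n}\frac{|y-z|^2}{2|x-y|^2|x-z|^2}\,\ud\nu(y)\,\ud\nu(z)\geq 0.
\]
The paper keeps the defect $b$ rather than discarding it; this is harmless for $k=1$ but turns out to be the whole point for $k=2$.

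The $k=2$ part of your proposal is only a plan, not a proof, and the plan as stated does not close. You assert that the two indefinite terms $4a^2\nabla u\cdot\nabla\Delta u$ and $4a^3(\nabla u)^T(\nabla^2u)\nabla u$ can be absorbed into the three manifestly non-negative terms by Cauchy--Schwarz on $|\nabla u|$, $|\nabla\Delta u|$, $|\nabla^2 u|$; no inequality is actually derived. Testing your scheme on the extremal case $\nu = \alpha_0\delta_{y_0}$ (so that every Cauchy--Schwarz step is an equality except for the sign of $(\nabla u)^T(\nabla^2 u)\nabla u$, which is $+\alpha_0^3|x-y_0|^{-4}$ but which Cauchy--Schwarz only bounds below by $-\alpha_0^3|x-y_0|^{-4}$) shows that your lower bound reduces, when $\alpha_0=1$, to $\tfrac{(n-4)^2(n-8)^2}{16}|x-y_0|^{-4}$, which degenerates to $0$ at $n=8$ while the actual value $e^{4u}Q^{(4)}_g = \tfrac{n^2(n-4)^2}{16}|x-y_0|^{-4}$ is strictly positive. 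Thus even if the non-negativity could be salvaged, the dichotomy argument (``if $Q^{(4)}_g$ vanishes at a point then $f\equiv 0$'') does not follow from your bound, since a zero of the Cauchy--Schwarz lower bound no longer forces the vanishing of $\nu$. The paper avoids this entirely: it never applies Cauchy--Schwarz to the cross terms, but instead uses Fubini plus the parallelogram identity to produce \emph{exact} expressions
\[
\langle\nabla u,\nabla\Delta u\rangle=-\frac{\alpha_0}{2(n-4)}\Delta^2u-\frac{(\Delta u)^2}{n-2}+(n-2)h,
\]
together with corresponding identities and one-sided bounds for $\langle\nabla u,\nabla b\rangle$ and $-\Delta b$, all in terms of the non-negative quantities $-\Delta u$, $(\Delta u)^2$, $\Delta^2 u$, $b$, $h$. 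These combine into the sharp lower bound
\[
\frac{4}{(n-4)^2}Q^{(4)}_ge^{4u}\geq (2-\alpha_0)\Bigl(\frac{2(n-2)}{n-4}-\alpha_0\Bigr)\Bigl(\frac{(n-4)^2(\Delta u)^2}{4(n-2)^2}+\frac{\Delta^2 u}{2(n-2)}\Bigr),
\]
whose right-hand side is strictly positive whenever $\nu\not\equiv 0$ (since $\Delta u(x)\neq 0$ then), giving the dichotomy cleanly. One further remark: the quantity that governs both cases is $\alpha_0 = \tfrac{2}{(n-1)!|\ms^n|}\|f\|_{L^1}\leq 1$, i.e.\ $\|f\|_{L^1}\leq (n-1)!|\ms^n|/2$, which is stronger than the threshold $\|f\|_{L^1}<(n-1)!|\ms^n|$ you cite at the end; keeping the sharp constant straight matters here, since the bound is already tight for the point-mass model when $\alpha_0 = 1$.
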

	\begin{proof}
		With the help of Theorem \ref{thm: int-Q^n-between 0,2} and Lemma \ref{lem: normal metric}, the  conformal metric $g$  is normal and has finite total $Q^{(n)}_g$-curvature.  For simplicity, denote by $\ud\nu(y)$ the non-negative  measure $$\frac{2}{(n-1)!|\mathbb{S}^n|}Q^{(n)}_g(y)e^{nu(y)}\ud y$$
		Immediately, one has  $\alpha_0=\int_{\mr^n}\ud\nu(y)$ and Theorem \ref{thm: int-Q^n-between 0,2} yields that 
		\begin{equation}\label{alpha_0 range}
			0\leq \alpha_0\leq 2.
		\end{equation}
		Since $g$ is normal, one has the following representation of $u$
		$$u(x)=\int_{\mr^n}\log\frac{|y|}{|x-y|}\ud\nu(y)+C.$$
		A direct computation yields that 
		\begin{equation}\label{equ-Deltau}
			-\Delta u(x)=\int_{\mr^n}\frac{n-2}{|x-y|^2}\ud\nu(y).
		\end{equation}
		Using Fubini's theorem and \eqref{equ-Deltau},  there holds
		\begin{align*}
			|\nabla u(x)|^2=&\int_{\mr^n}\int_{\mr^n}\frac{(x-y)\cdot(x-z)}{|x-y|^2|x-z|^2}\ud\nu(y)\ud\nu(z)\\
			=&\int_{\mr^n}\int_{\mr^n}\frac{|x-y|^2+|x-z|^2-|y-z|^2}{2|x-y|^2|x-z|^2}\ud\nu(y)\ud\nu(z)\\
			=&\frac{-\Delta u}{n-2}\alpha_0-\int_{\mr^n}\int_{\mr^n}\frac{|y-z|^2}{2|x-y|^2|x-z|^2}\ud\nu(y)\ud\nu(z).
		\end{align*}
		Denote the last term on the right by $-b(x)$ to rewrite the equation as follows:
		\begin{equation}\label{def of b}
			b(x):=\frac{-\alpha_0}{n-2}\Delta u-|\nabla u|^2=\int_{\mr^n}\int_{\mr^n}\frac{|y-z|^2}{2|x-y|^2|x-z|^2}\ud\nu(y)\ud\nu(z).
		\end{equation}
		Obviously,  due to  the assumption that $Q^{(n)}_g\geq 0$, one has $b(x)\geq 0.$

		For $k=1$ in \eqref{Q_g^2k}, one has
		\begin{align*}
			Q^{(2)}_ge^{\frac{n+2}{2}u}=&-\frac{n-2}{2}e^{\frac{n-2}{2}u}(\Delta u+\frac{n-2}{2}|\nabla u|^2)\\
			=&\frac{n-2}{2}e^{\frac{n-2}{2}u}\left(\frac{2-\alpha_0}{2}(-\Delta u)+\frac{n-2}{2}b\right).
		\end{align*}
		Combining the equations \eqref{equ-Deltau},  \eqref{def of b} and \eqref{alpha_0 range}, we easily obtain that 
		$$Q^{(2)}_g\geq 0.$$
		
		Moreover, if $Q^{(2)}_g(x_0)$ vanishes at  some point $x_0\in \mr^n$, then necessarily  $b(x_0)=0$. This in turn implies that the $nth$-order $Q^{(n)}_g$ vanishes everywhere by the definition of $b$ (see \eqref{def of b}).  Sequentially, since $g$ is normal,  it follows from the representation formula \eqref{normal-metric-def} that  $u$ must be a constant.  Finally, we obtain that $Q^{(2)}_g$ vanishes everywhere.

		Now, we are going to deal with $k=2$ if $n\geq 6$.
		
		Making use of the equations \eqref{Q_g^2k} and \eqref{def of b}, we compute directly to get the following equation:
		\begin{align*}
			\Delta^2e^{\frac{n-4}{2}u}=&\Delta\left(e^{\frac{n-4}{2}u}(\frac{n-4}{2}\Delta u+\frac{(n-4)^2}{4}|\nabla u|^2)\right)\\
			=&\frac{(n-4)^2}{4}\Delta\left(e^{\frac{n-4}{2}u}(c_1\Delta u-b)\right),
		\end{align*}
		where the constant  $c_1=\frac{2}{n-4}-\frac{\alpha_0}{n-2}.$ Moreover, $c_1>0$ due to \eqref{alpha_0 range}.
		Via the direct computation, there holds
		\begin{align*}
			\Delta^2e^{\frac{n-4}{2}u}=&\frac{(n-4)^4}{16}e^{\frac{n-4}{2}u}(c_1\Delta u-b)^2+\frac{(n-4)^2}{4}e^{\frac{n-4}{2}u}(c_1\Delta^2u-\Delta b)\\
			&+\frac{(n-4)^3}{4}e^{\frac{n-4}{2}u}\langle\nabla u, \nabla (c_1\Delta u-b)\rangle.
		\end{align*}
		
		Now, we are going to deal with these terms one by one. Firstly, the direct computation yields that 
		\begin{equation}\label{equ-Delta^2u}
			\Delta^2u=2(n-2)(n-4)\int_{\mr^n}\frac{1}{|x-y|^4}\ud\nu(y)
		\end{equation}
		which is non-negative. 
		
		Using \eqref{equ-Deltau},  \eqref{equ-Delta^2u} and Fubini's theorem, one has 
		\begin{align*}
			\langle\nabla u,\nabla \Delta u \rangle=&-2(n-2)\int_{\mr^n}\int_{\mr^n}\frac{(x-y)\cdot(x-z)}{|x-y|^2|x-z|^4}\ud\nu(y)\ud\nu(z)\\
			=&(n-2)\int_{\mr^n}\int_{\mr^n}\frac{-|x-y|^2-|x-z|^2+|y-z|^2}{|x-y|^2|x-z|^4}\ud\nu(y)\ud\nu(z)\\
			=&-\frac{\alpha_0}{2(n-4)}\Delta^2u-\frac{(\Delta u)^2}{n-2}+(n-2)\int_{\mr^n}\int_{\mr^n}\frac{|y-z|^2}{|x-y|^2|x-z|^4}\ud\nu(y)\ud\nu(z).
		\end{align*}
		
		For convenience,  we define the function $h(x)$ by the following equation:
		\begin{equation}\label{def of h}
			h(x):=\int_{\mr^n}\int_{\mr^n}\frac{|y-z|^2}{|x-y|^2|x-z|^4}\ud\nu(y)\ud\nu(z)\geq 0.
		\end{equation}
		
		Thus, one has
		\begin{equation}\label{nabla u,nabla Delta u}
			\langle\nabla u,\nabla \Delta u \rangle=-\frac{\alpha_0}{2(n-4)}\Delta^2u-\frac{(\Delta u)^2}{n-2}+(n-2)h.
		\end{equation}
		Via the direct computation and  the symmetry,  we obtain the estimate:
		\begin{align*}
			\langle\nabla u,\nabla b\rangle=&\int_{\mr^n}\int_{\mr^n}\int_{\mr^n}\frac{[(x-y)\cdot(x-z)]|z-w|^2}{|x-y|^2|x-z|^4|x-w|^2}\ud\nu(y)\ud\nu(z)\ud\nu(w)\\
			&+\int_{\mr^n}\int_{\mr^n}\int_{\mr^n}\frac{[(x-y)\cdot(x-w)]|z-w|^2}{|x-y|^2|x-z|^2|x-w|^4}\ud\nu(y)\ud\nu(z)\ud\nu(w)\\
			=&2\int_{\mr^n}\int_{\mr^n}\int_{\mr^n}\frac{[(x-y)\cdot(x-z)]|z-w|^2}{|x-y|^2|x-z|^4|x-w|^2}\ud\nu(y)\ud\nu(z)\ud\nu(w)\\
			= &\int_{\mr^n}\int_{\mr^n}\int_{\mr^n}\frac{(|x-y|^2+|x-z|^2-|y-z|^2)|z-w|^2}{|x-y|^2\cdot|x-z|^4|x-w|^2}\ud\nu(y)\ud\nu(z)\ud\nu(w)\\
			\leq &\int_{\mr^n}\int_{\mr^n}\int_{\mr^n}\left(\frac{1}{|x-y|^2|x-z|^2}+\frac{1}{|x-z|^4}\right)\frac{|z-w|^2}{|x-w|^2}\ud\nu(y)\ud\nu(z)\ud\nu(w).
		\end{align*}
		Hence, with the help of the representations \eqref{equ-Deltau},  \eqref{def of b} and \eqref{def of h}, one gets
		\begin{equation}\label{nabla u,nabla b geq Delta ub +h}
			\langle\nabla u,\nabla b\rangle\leq \frac{2}{n-2}(-\Delta u)b+h.
		\end{equation}
		Similarly, one can also obtain the following identity:
		\begin{align*}
			\Delta b=&-(n-4)\int_{\mr^n}\int_{\mr^n}\frac{|y-z|^2}{|x-y|^4|x-z|^2}\ud\nu(y)\ud\nu(z)\\
			&-(n-4)\int_{\mr^n}\int_{\mr^n}\frac{|y-z|^2}{|x-y|^2|x-z|^4}\ud\nu(y)\ud\nu(z)\\
			&+4\int_{\mr^n}\int_{\mr^n}\frac{[(x-y)\cdot(x-z)]|y-z|^2}{|x-y|^4|x-z|^4}\ud\nu(y)\ud\nu(z)\\
			=&-2(n-4)\int_{\mr^n}\int_{\mr^n}\frac{|y-z|^2}{|x-y|^2|x-z|^4}\ud\nu(y)\ud\nu(z)\\
			&+2\int_{\mr^n}\int_{\mr^n}\frac{(|x-y|^2+|x-z|^2-|y-z|^2)|y-z|^2}{|x-y|^4|x-z|^4}\ud\nu(y)\ud\nu(z)\\
			=&-2(n-6)\int_{\mr^n}\int_{\mr^n}\frac{|y-z|^2}{|x-y|^2|x-z|^4}\ud\nu(y)\ud\nu(z)\\
			&-2\int_{\mr^n}\int_{\mr^n}\frac{|y-z|^4}{|x-y|^4|x-z|^4}\ud\nu(y)\ud\nu(z).
		\end{align*}
		With the help of above identity, one will have no difficulty to reach:
		\begin{equation}\label{equ-Deltab lower bound}
			-\Delta b\geq 2(n-6)h.
		\end{equation}

		The estimates \eqref{nabla u,nabla Delta u}, \eqref{nabla u,nabla b geq Delta ub +h} and \eqref{equ-Deltab lower bound} can be used to get the first estimate:
		\begin{align*}
			&\frac{4}{(n-4)^2}e^{-\frac{n-4}{2}u}\Delta^2e^{\frac{n-4}{2}u}\\
			\geq &\frac{(n-4)^2}{4}\left(c_1^2(\Delta u)^2+2c_1(-\Delta u)b+b^2\right)+c_1\Delta^2u+2(n-6)h\\
			&-\frac{\alpha_0c_1}{2}\Delta^2u-\frac{(n-4)c_1}{n-2}(\Delta u)^2+(n-2)(n-4)c_1h\\
			&-\frac{2(n-4)}{n-2}(-\Delta u)b-(n-4)h\\
			=&\frac{(n-4)^2(2-\alpha_0)c_1}{4(n-2)}(\Delta u)^2+\frac{(2-\alpha_0)c_1}{2}\Delta^2u\\
			&+\frac{(n-4)^2(2-\alpha_0)}{2(n-2)}(-\Delta u)b+(n-4)(3-\alpha_0)h.
		\end{align*}
		Next, keep in mind that  $(-\Delta u), b$ and  $h$ are all non-negative  and the non-negative number $\alpha_0$ is no greater than $2$ \eqref{alpha_0 range} to easily obtain: 
		\begin{equation}\label{equ-Delta^e^n-4/2u}
			\frac{4}{(n-4)^2}e^{-\frac{n-4}{2}u}\Delta^2e^{\frac{n-4}{2}u}\geq \frac{(n-4)^2(2-\alpha_0)c_1}{4(n-2)}(\Delta u)^2+\frac{(2-\alpha_0)c_1}{2}\Delta^2u.
		\end{equation}
		By the definition  \eqref{Q_g^2k} for $Q^{(4)}_g$ and the constant $c_1$ \eqref{equ-Delta^e^n-4/2u}, one should rewrite above estimate as:
		\begin{equation}\label{positive Q^4}
			\frac{4}{(n-4)^2}Q^{(4)}_ge^{4u}\geq \left(2-\alpha_0\right)\left(\frac{2(n-2)}{n-4}-\alpha_0\right)\left(\frac{(n-4)^2(\Delta u)^2}{4(n-2)^2}+\frac{\Delta^2 u}{2(n-2)}\right).
		\end{equation}
		As was pointed out early that $\Delta^2 u \geq 0$ and $0 \leq \alpha_0 \leq 2$, we can rather easily see that  
		$$Q^{(4)}_g \geq 0.$$
		Due to the same reason as before, if $Q^{(4)}_g$ vanishes at some point, it must vanish everywhere.
		
		The proof is complete.
	\end{proof}

	Now, we are going to show that the positivity of $nth$-order $Q$-curvature implies the positivity  of Ricci curvature.
	
	\begin{theorem}\label{thm:positive Ricci for normal metric}
		For a normal metric $g=e^{2u}|dx|^2$   on $\mr^n$ where 
		$n\geq 4$ is an even integer, if the $nth$-order $Q$-curvature is non-negative and its total integral satisfies the estimate
		$$\int_{\mr^n}Q^{(n)}_g\ud\mu_g\leq (n-1)!|\mathbb{S}^n|,$$
		then its Ricci curvature must be non-negative.
	\end{theorem}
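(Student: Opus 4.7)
The plan is to write $Ric_g(v,v)$ as a manifest sum of squares of integrals against the non-negative measure furnished by the normal form of $u$, so that its non-negativity is visible by inspection.

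Since $g$ is normal, $u(x) = \int_{\mr^n}\log\frac{|y|}{|x-y|}\,\ud\nu(y) + C$ with $\ud\nu := \frac{2}{(n-1)!|\ms^n|}Q_g^{(n)}e^{nu}\,\ud x \geq 0$ and total mass $\alpha_0 \leq 2$, which is exactly the integral hypothesis. Setting $f_y(x) := (x-y)/|x-y|^2$, differentiation under the integral sign gives $\nabla u = -\int f_y\,\ud\nu$, $-\Delta u = (n-2)\int |f_y|^2\,\ud\nu$, and
$$-\nabla^2 u(v,v) = \int \bigl[|v|^2|f_y|^2 - 2\langle f_y, v\rangle^2\bigr]\,\ud\nu(y).$$
Substituting these into the conformal change formula
$$Ric_g(v,v) = -(n-2)\nabla^2 u(v,v) + (n-2)\langle \nabla u, v\rangle^2 - \bigl(\Delta u + (n-2)|\nabla u|^2\bigr)|v|^2$$
expresses $Ric_g(v,v)$ purely as (single and double) integrals against $\ud\nu$.

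The central step is to reorganize the double integrals $\langle \nabla u, v\rangle^2 = \iint \langle f_y, v\rangle \langle f_z, v\rangle\,\ud\nu\ud\nu$ and $|\nabla u|^2 = \iint \langle f_y, f_z\rangle\,\ud\nu\ud\nu$ via the polarization identities $2\langle f_y, v\rangle\langle f_z, v\rangle = \langle f_y, v\rangle^2 + \langle f_z, v\rangle^2 - \langle f_y - f_z, v\rangle^2$ and $2\langle f_y, f_z\rangle = |f_y|^2 + |f_z|^2 - |f_y - f_z|^2$. Observing, as in the proof of Theorem \ref{thm:positive Q_4}, that $|f_y - f_z|^2 = |y-z|^2/(|x-y|^2|x-z|^2)$ (so that $\tfrac12\iint|f_y - f_z|^2\,\ud\nu\ud\nu = b(x)$ from \eqref{def of b}), all single-integral contributions consolidate with coefficient $2-\alpha_0$, and the target identity is
\begin{equation*}
Ric_g(v,v) = (n-2)(2-\alpha_0)\!\int\! \bigl[|v|^2|f_y|^2 - \langle f_y, v\rangle^2\bigr]\,\ud\nu + \frac{n-2}{2}\!\iint\! \bigl[|v|^2|f_y-f_z|^2 - \langle f_y-f_z, v\rangle^2\bigr]\,\ud\nu(y)\ud\nu(z).
\end{equation*}

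Both integrands are pointwise non-negative by the Cauchy--Schwarz inequality $|v|^2|w|^2 \geq \langle w,v\rangle^2$ (with $w = f_y$ and $w = f_y - f_z$ respectively), and the prefactor $2-\alpha_0$ is non-negative by the total-curvature bound; hence $Ric_g(v,v) \geq 0$ for every $v \in \mr^n$, which is the claim. The only real obstacle is the algebraic bookkeeping required to reassemble the four pieces of the conformal change formula into these two Cauchy--Schwarz-friendly blocks; what is notable is that the hypotheses $\alpha_0 \leq 2$ and $\ud\nu \geq 0$ each enter in exactly one place, so the integral bound imposed in the statement is sharp for this method.
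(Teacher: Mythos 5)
Your proof is correct and takes essentially the same route as the paper: both start from the normal-metric representation of $u$, $\nabla u$, $\nabla^2 u$ as integrals against the non-negative measure $\ud\nu$, substitute into the conformal Ricci formula, and invoke $\alpha_0\leq 2$ together with a Cauchy--Schwarz type comparison to obtain non-negativity. Your variant is a mild polish of the paper's argument, since polarization turns the paper's Hölder-inequality step (the bound $\alpha_0\left(-(n-2)u_{ij}a_ia_j-\Delta u|\vec{a}|^2\right)\geq 2(n-2)\left(|\nabla u|^2|\vec{a}|^2-u_iu_ja_ia_j\right)$) into the exact identity expressing $Ric_g(v,v)$ as a sum of two manifestly non-negative blocks, but the underlying computation is the same.
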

	\begin{proof}
		
		It is well known that, Ricci curvature tensor of the conformal metric $g=e^{2u}|dx|^2$ can be written as (see Besse's classical book \cite{B})
		\begin{equation}\label{Ricci-equ}
			(Ric_g)_{ij}=-(n-2)u_{ij}+(n-2)u_iu_j-\Delta u\delta_{ij}-(n-2)|\nabla u|^2\delta_{ij}.
		\end{equation}
		
		Since $g$ is normal, up to adding a constant, $u$ is the logarithmic potential of $Q^{(n)}_g e^{nu}$. One continues to use the notation $d\nu$ to indicate the measure element as before. Thus one obtains:
		$$u_{i}=-\int_{\mr^n}\frac{(x_i-y_i)}{|x-y|^2}\ud\nu(y)$$
		and 
		\begin{equation}\label{u_ij-repren}
			u_{ij}=-\int_{\mr^n}\frac{\delta_{ij}}{|x-y|^2}\ud\nu(y)+2\int_{\mr^n}\frac{(x_i-y_i)(x_j-y_j)}{|x-y|^4}\ud\nu(y).
		\end{equation}
		Recall the expression of $-\Delta u$ \eqref{equ-Deltau}. Make use of above equation for $u_{ij}$ \eqref{u_ij-repren}, together with the expression for $(-\Delta u)$, for any  vector $\vec{a}=(a_1,\cdots, a_n)$, to conclude that 
		\begin{align*}
			&\sum^n_{i=1}\sum^n_{j=1}-(n-2)u_{ij}a_ia_j-\Delta u|\vec{a}|^2\\
			=&2(n-2)\int_{\mr^n}\frac{|\vec{a}|^2}{|x-y|^2}\ud\nu(y)-2(n-2)\int_{\mr^n}\frac{\sum^n_{i=1}\sum^n_{j=1}(x_i-y_i)a_i(x_j-y_j)a_j}{|x-y|^4}\ud\nu(y)\\
			=&2(n-2)\int_{\mr^n}\frac{|\vec{a}|^2\cdot|x-y|^2- (\langle x-y, \vec{a}\rangle)^2}{|x-y|^4}\ud\nu(y)
		\end{align*}
		which deduces that 
		\begin{equation}\label{u_ija_ia_jgeq 0}
			\sum^n_{i=1}\sum^n_{j=1}-(n-2)u_{ij}a_ia_j-\Delta u|\vec{a}|^2\geq 0.
		\end{equation}
		Now, we use the elementary identity: $2(|\vec{a}|^2|x-y|^2- (\langle x-y, \vec{a}\rangle)^2) = \sum^n_{i=1}\sum^n_{j=1} \left((x_i-y_i)a_j-(x_j-y_j)a_i\right)^2$ and H\"older's inequality, to obtain the estimate:
		\begin{align*}
			&\alpha_0\sum^n_{i=1}\sum^n_{j=1}\int_{\mr^n}\frac{\left((x_i-y_i)a_j-(x_j-y_j)a_i\right)^2}{|x-y|^4}\ud\nu(y)\\
			=&\sum^n_{i=1}\sum^n_{j=1}\int_{\mr^n}\ud\nu(y)\int_{\mr^n}\frac{\left((x_i-y_i)a_j-(x_j-y_j)a_i\right)^2}{|x-y|^4}\ud\nu(y)\\  \geq&\sum^n_{i=1}\sum^n_{j=1}\left(\int_{\mr^n}\frac{(x_i-y_i)a_j-(x_j-y_j)a_i}{|x-y|^2}\ud\nu(y)\right)^2\\
			=&\sum^n_{i=1}\sum^n_{j=1}(u_ia_j-a_iu_j)^2\\
			=&2\left(|\nabla u|^2\cdot |\vec{a}|^2-\sum^n_{i=1}\sum^n_{j=1}u_iu_ja_ia_j\right)
		\end{align*}
		which deduces that 
		\begin{equation}\label{u_ija_ia_jgeqnabla u^2}
			\alpha_0\left(-(n-2)u_{ij}a_ia_j-\Delta u|\vec{a}|^2\right)\geq 2(n-2)\left(|\nabla u|^2\cdot |\vec{a}|^2-u_iu_ja_ia_j\right).
		\end{equation}
		Using the  estimates \eqref{u_ija_ia_jgeq 0}, \eqref{u_ija_ia_jgeqnabla u^2} and the assumption $0\leq \alpha_0\leq 2$, one has
		\begin{align*}
			&Ric_g(\vec{a},\vec{a})\\
			=&(Ric_g)_{ij}a_ia_j\\
			=&-(n-2)u_{ij}a_ia_j-\Delta u|\vec{a}|^2+(n-2)\left(u_iu_ja_ia_j-|\nabla u|^2|\vec{a}|^2\right)\\
			\geq &(1-\frac{\alpha_0}{2})\left(-(n-2)u_{ij}a_ia_j-\Delta u|\vec{a}|^2\right)\geq 0.
		\end{align*}
		Thus, we finish the proof. 
	\end{proof}
	
	Combining the above theorem with Theorem \ref{thm: int-Q^n-between 0,2}, we obtain the following result.
	\begin{corollary}
		Let $g=e^{2u}|dx|^2$ be a conformal metric on $\mr^n$  where $n\geq 4$ is an even integer.  Suppose  the  $nth$-order $Q^{(n)}_g$-curvature  and scalar curvature are non-negative. Then, the Ricci curvature $Ric_g$ is non-negative.
	\end{corollary}
	
	Hence, Theorem \ref{thm: positive Ricci} follows from the above corollary directly.

	If we focus our attention on the conformal class of standard sphere, we may also establish such result by requiring only the positivity of $nth$-order $Q$-curvature with the help of Chern-Gauss-Bonnet formula.
	\begin{theorem}\label{thm:positive Ricci for Sn}
		Consider the  conformal metric $g=e^{2w}g_0$ on standard sphere $(\ms^n, g_0)$ where $n\geq 4$ is an even integer. If the nth-order Q-curvature $Q^{(n)}_g$ is non-negative, then the Ricci curvature  $Ric_g$ is non-negative.
	\end{theorem}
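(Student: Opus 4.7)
The plan is to reduce the statement to the Euclidean setting via stereographic projection and then invoke Theorem \ref{thm:positive Ricci for normal metric}. Let $\sigma : \mathbb{S}^n \setminus \{N\} \to \mathbb{R}^n$ denote stereographic projection. Under this identification, the round metric pulls back to $\left(\frac{2}{1+|x|^2}\right)^2|dx|^2$, so $g = e^{2w}g_0$ becomes $g = e^{2u}|dx|^2$ on $\mathbb{R}^n$ with
\[
u(x) = w(\sigma^{-1}(x)) + \log\frac{2}{1+|x|^2}.
\]
Compactness of $\mathbb{S}^n$ together with smoothness of $w$ yield the pointwise asymptotics $u(x) = -2\log|x| + O(1)$ as $|x|\to\infty$, while $f := Q^{(n)}_g e^{nu}$ is smooth on $\mathbb{R}^n$ with $|f(x)| = O(|x|^{-2n})$, so $f \in L^1(\mathbb{R}^n)\cap L^\infty_{\mathrm{loc}}(\mathbb{R}^n)$.

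The next step is to verify that this pulled back metric is normal. Set $v := \mathcal{L}(f)$ and $a := u - v$, so that $(-\Delta)^{n/2} a = 0$. Revisiting the decomposition used in the proof of Lemma \ref{lem:barLf}, the fast polynomial decay of $f$ forces the local term $\int_{B_1(x)}\log\frac{1}{|x-y|}f(y)\ud y$ to be $O(|x|^{-2n})$ pointwise, upgrading the spherical estimate to the pointwise statement $v(x) = -\alpha\log|x| + o(\log|x|)$ at infinity, where $\alpha := \frac{2}{(n-1)!|\mathbb{S}^n|}\int_{\mathbb{R}^n} f\ud x$. Combined with the pointwise bound on $u$ this gives $|a(x)| \le C(|\alpha - 2| + 1)\log(|x|+2)$, hence $\fint_{B_r(0)}|a|\ud x = o(r)$, and Corollary \ref{cor:Liouville} forces $a$ to be a constant. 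Boundedness of $a$ then matches the leading logarithmic terms of $u$ and $v$ to yield $\alpha = 2$, i.e.\ $\int_{\mathbb{R}^n} Q^{(n)}_g e^{nu}\ud x = (n-1)!|\mathbb{S}^n|$. (Alternatively, this total $Q$-curvature identity follows at once from conformal invariance of the $Q$-curvature integral on closed even-dimensional manifolds, using that the Weyl part of the Chern-Gauss-Bonnet integrand is pointwise conformally invariant and vanishes for the round metric.)

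With normality of $g$ and $\alpha_0 = 2$ in hand, Theorem \ref{thm:positive Ricci for normal metric} applies directly and yields $\mathrm{Ric}_g \ge 0$ on $\mathbb{R}^n \cong \mathbb{S}^n\setminus\{N\}$. Since $g$ extends smoothly across $N$ to the original metric on $\mathbb{S}^n$, continuity of the Ricci tensor in the metric transfers the inequality to the north pole, so $\mathrm{Ric}_g \ge 0$ on all of $\mathbb{S}^n$.

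The main technical obstacle lies in the normality step: Lemma \ref{lem:barLf} as stated controls only spherical averages of $\mathcal{L}(f)$, whereas the comparison with the explicit pointwise asymptotics of $u$ requires a pointwise leading-order asymptotic for $\mathcal{L}(f)$. This is where compactness of $\mathbb{S}^n$ enters decisively: the rapid decay $f = O(|x|^{-2n})$ lets the local contribution to the logarithmic potential be absorbed into the error term. Without such decay, controlling $a$ would demand a substantially more delicate polyharmonic Liouville argument.
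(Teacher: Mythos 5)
Your proposal is correct and takes essentially the same route as the paper: pull back by stereographic projection, use the explicit decay of the pulled-back conformal factor to establish normality via the polyharmonic Liouville theorem, identify the total $Q$-curvature as $(n-1)!|\mathbb{S}^n|$ (i.e.\ $\alpha_0=2$), and apply Theorem~\ref{thm:positive Ricci for normal metric}; the paper simply invokes the Chern-Gauss-Bonnet formula for $\alpha_0=2$ directly rather than recovering it by matching logarithmic asymptotics, but you mention that alternative yourself.
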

	\begin{proof}
		Via the  stereographic projection, we can identify the conformal metric $g$ on $\mathbb{S}^n$ with a metric on $\mathbb{R}^n$. Specifically, for any point $(\xi_1, \dots, \xi_{n+1}) \in \mathbb{S}^n \subset \mathbb{R}^{n+1}$ and its corresponding point $x = (x_1, \dots, x_n) \in \mathbb{R}^n$ under the projection, we have the coordinate relations:
		$$
		\xi_i = \frac{2x_i}{1+|x|^2} \quad \text{for } 1 \leq i \leq n, \quad \text{and} \quad \xi_{n+1} = \frac{1-|x|^2}{1+|x|^2}.
		$$
		Define the transformed function
		\begin{equation}\label{w_1 def}
			w_1(x) := \log\left(\frac{2}{1+|x|^2}\right) + w(\xi(x)).
		\end{equation}
		By the conformal invariance of the GJMS operator, we obtain
		$$
		(-\Delta)^{\frac{n}{2}} w_1 = Q_g^{(n)} e^{n w_1}.
		$$
		Furthermore, the Chern-Gauss-Bonnet formula on $\mathbb{S}^n$ implies that $Q_g^{(n)} e^{n w_1} \in L^1(\mathbb{R}^n)$ with 
		\begin{equation}\label{CGB formula}
			\int_{\mathbb{R}^n} Q_g^{(n)} e^{n w_1} \, dx = (n-1)!|\mathbb{S}^n|.
		\end{equation}
		For more details, we refer to \cite{Chang-Yang MRL}.
		From \eqref{w_1 def}, we easily deduce that $|w_1(x)| \leq C \log(|x|+2)$, which shows that $w_1$ is  normal. An application of Theorem \ref{thm:positive Ricci for normal metric} completes the proof.
	\end{proof}

	In the famous work \cite{Guan-V-W}, they showed that when $kth$ elementary symmetric function $\sigma_i(A_g)\geq 0$ of Schouten tensor $A_g$ for $1\leq i\leq k$ and $k\geq 2$, then Ricci curvature has a lower bound depending on scalar curvature. In our setting, we find that such phenomenon also  holds for  $Q_{g}^{(2k)}$-curvature.
	
	Same  as in \cite{LX}, we say that  the  $Q^{(2k)}_g$-curvature has \emph{slow decay barrier} near infinity if, for $|x|\gg1$ and $-2k<s\leq 0$, there holds
	$$Q^{(2k)}_g\geq C|x|^{s}.$$

	\begin{theorem}\label{thm:positive-Q-2k-implies-Ricci-lower}
		Given integers $n>2k$ and $k\geq2$, consider a conformal metric $g=e^{2u}|dx|^2$ on $\mr^n$. Suppose that  $Q^{(2k)}_g$-curvature is non-negative and has slow decay barrier near infinity. Then there holds
		$$Ric_g\geq \frac{2k-n}{4(k-1)}R_gg.$$
	\end{theorem}
	\begin{proof}
		Firstly, set the notation $v:=e^{\frac{n-2k}{2}u}$. Using \eqref{Q_g^2k}, one has
		$$(-\Delta)^kv=Q^{(2k)}_gv^{\frac{n+2k}{n-2k}}.$$
		Based on our assumptions, with the help of Theorem 2.6 in \cite{LX}, one has
		\begin{equation}\label{equ:integeral-for-v}
			v(x)=\frac{1}{C(n,k)}\int_{\mr^n}\frac{Q^{(2k)}_g(y)v(y)^{\frac{n+2k}{n-2k}}}{|x-y|^{n-2k}}\ud y
		\end{equation}
		and for any $1\leq i\leq 2k-1$, Lemma 2.4 in \cite{LX} yields that 
		\begin{equation}\label{equ:nabla-for-v}
			\nabla^i v=\frac{1}{C(n,k)}\int_{\mr^n}\nabla^i_x|x-y|^{2k-n}Q^{(2k)}_g(y)v(y)^{\frac{n+2k}{n-2k}}\ud y
		\end{equation}
		where $C(n,k)$ is a positive constant depending on $n,k$.
		For simplicity, we set the notation
		$$\ud\mu(y):=\frac{1}{C(n,k)}Q^{(2k)}_g(y)v(y)^{\frac{n+2k}{n-2k}}\ud y.$$
		A direct computation yields that 
		\begin{equation}\label{nabla-u}
			u_{i}=\frac{2}{n-2k}\frac{v_i}{v},  
		\end{equation}
		\begin{equation}\label{u_ij}
			u_{ij}=\frac{2}{n-2k}\left(\frac{v_{ij}}{v}-\frac{v_iv_j}{v^2}\right),
		\end{equation}
		
		and
		\begin{equation}\label{Delta-u}
			\Delta u=\frac{2}{n-2k}\left(\frac{\Delta v}{v}-\frac{|\nabla v|^2}{v^2}\right).   
		\end{equation}
		Thus, combining \eqref{nabla-u} with \eqref{u_ij}, one has
		\begin{equation}\label{equ:-u_ij+u_iu_j}
			-u_{ij}+u_{i}u_j=\frac{2}{n-2k}\left(-\frac{v_{ij}}{v}+\frac{n-2k+2}{n-2k}\frac{v_iv_j}{v^2}\right)
		\end{equation}
		From \eqref{equ:integeral-for-v} and \eqref{equ:nabla-for-v}, one has
		$$v_{i}=(2k-n)\int_{\mathbb{R}^n}\frac{(x_i-y_i)}{|x-y|^{n-2k+2}}\ud\mu(y)$$
		and then 
		$$v_{ij}=(2k-n)\int_{\mathbb{R}^n}\frac{\delta_{ij}}{|x-y|^{n-2k+2}}\ud\mu(y)+(2k-n)(2k-2-n)\int_{\mathbb{R}^n}\frac{(x_i-y_i)(x_j-y_j)}{|x-y|^{n-2k+4}} \ud\mu(y).$$
		Immediately, there holds 
		$$\Delta v=(2k-n)(2k-2)\int_{\mathbb{R}^n}\frac{1}{|x-y|^{n-2k+2}}\ud\mu(y).$$
		Using the above integral  identities, H\"older's inequality and the same trick during the proof of Theorem \ref{thm:positive Ricci for normal metric}, for any vector $\vec{a}=(a_1,\cdots,a_n)$, one has
		\begin{align*}
			&-vv_{ij}a_ia_j\\
			=&-\frac{v\Delta v}{2k-2}|\vec{a}|^2-(n-2k)(n+2-2k)v\int_{\mathbb{R}^n}\frac{(x_i-y_i)a_i(x_j-y_j)a_j}{|x-y|^{n-2k+4}}\ud\mu(y)\\
			=&\frac{n+1-2k}{2k-2} v\Delta v|\vec{a}|^2+(n-2k)(n+2-2k)v\int_{\mathbb{R}^n}\frac{|a|^2|x-y|^2-(x_i-y_i)a_i(x_j-y_j)a_j}{|x-y|^{n-2k+4}}\ud\mu(y)\\
			=&\frac{n+1-2k}{2k-2}|\vec{a}|^2v\Delta v\\
			&+\frac{(n-2k)(n+2-2k)}{2}\int_{\mathbb{R}^n}\frac{(a_j(x_i-y_i)-a_i(x_j-y_j))^2}{|x-y|^{n-2k+4}}\ud\mu(y)\int_{\mathbb{R}^n}\frac{1}{|x-y|^{n-2m}}\ud\mu(y)\\
			\geq &\frac{n+1-2k}{2k-2}|\vec{a}|^2 v\Delta v+\frac{(n-2k)(n+2-2k)}{2}\left(\int_{\mathbb{R}^n}\frac{a_j(x_i-y_i)-a_i(x_j-y_j)}{|x-y|^{n-2k+2}}\ud\mu(y)\right)^2\\
			=&\frac{n+1-2k}{2k-2}|\vec{a}|^2v\Delta v+\frac{n+2-2k}{n-2k}(|\vec{a}|^2|\nabla v|^2-a_ia_jv_iv_j).
		\end{align*}
		Hence, using \eqref{equ:-u_ij+u_iu_j} and the above estimate, one has
		\begin{equation}\label{-u_ij+u_iu_j}
			(n-2)(-u_{ij}a_ia_j+u_{i}u_ja_ia_j)\geq \frac{2(n-2)}{(n-2k)v^2}\left(\frac{n+1-2k}{2k-2}|\vec{a}|^2v\Delta v+\frac{n+2-2k}{n-2k}|\vec{a}|^2|\nabla v|^2\right)
		\end{equation}
		Thus, inserting  \eqref{Delta-u}, \eqref{nabla-u} and \eqref{-u_ij+u_iu_j} into \eqref{Ricci-equ}, we obtain that 
		$$(Ric_g)_{ij}a_ia_j\geq \frac{n-1}{k-1}\frac{\Delta v}{v}|\vec{a}|^2+\frac{2(n-1)}{n-2k}\frac{|\nabla v|^2}{v^2}|\vec{a}|^2$$
		Notice that the scalar curvature $R_g$ satisfies the following equation
		$$R_ge^{2u}=2(n-1)(-\Delta u-\frac{n-2}{2}|\nabla u|^2)=\frac{4(n-1)}{n-2k}\left(-\frac{\Delta v}{v}+\frac{2-2k}{n-2k}\frac{|\nabla v|^2}{v^2}\right)$$
		and $R_gg(\vec{a},\vec{a})=R_ge^{2u}|\vec{a}|^2$.
		Since the vector $\vec{a}$ is arbitrary, 
		there holds
		$$Ric_g\geq \frac{2k-n}{4(k-1)}R_gg.$$
		
		Thus, we finish our proof.
	\end{proof}
	
	Indeed, the slow decay barrier condition is imposed solely to guarantee that the conformal factor satisfies the integral representations \eqref{equ:integeral-for-v} and \eqref{equ:nabla-for-v}. When examining conformal metrics on the standard sphere through stereographic projection, one can readily verify that conditions \eqref{equ:integeral-for-v} and \eqref{equ:nabla-for-v} remain valid. Consequently, we obtain the following corollary to Theorem \ref{thm:positive-Q-2k-implies-Ricci-lower}
	\begin{corollary}
		Given integers $n>2k$ and $k\geq 2$. Consider the conformal metric $g=e^{2w}g_0$ on standard sphere $(\ms^n,g_0)$. Suppose that $2kth$-order $Q$-curvature $Q^{(2k)}_g$ is non-negative. Thus, there holds
		$$Ric_g\geq \frac{2k-n}{4(k-1)}R_gg.$$
	\end{corollary}

	\section{Comparison of metric distance and measure distance}\label{sec:A_inftyweight}
	
	Another ingredient for our argument is the concept: strong $A_\infty$ weight and its properties. For comprehensive treatments, readers  are refered to articles \cite{DS} and \cite{Semmes}.
	
	Our presentation here starts with the definition. Let $\mu = w(x)dx$ be a measure on $\mathbb{R}^n$ and suppose that $w$ is a positive locally integrable function. The function $w$ is said to be an \emph{$A_\infty$ weight} if there exists a constant $C > 0$ such that for every ball $B \subset \mathbb{R}^n$,
	$$
	\fint_{B} w\,dx \leq C \exp\left(\fint_B \log w\,dx\right).
	$$
	
	If the function $w$ is strictly positive everywhere on $\mathbb{R}^n$, we may write it as $e^{nu}$. Associated it, there will be a conformal metric $g = e^{2u}|dx|^2$ on $\mathbb{R}^n$. With this conformal metric, we can consider its distance geometry. Due to this consideration, Semmes \cite{Semmes} establishs the following fundamental inequality
	\begin{equation}\label{d_g leq delta_g}
		d_g(x,y) \leq C\delta_g(x,y),
	\end{equation}
	where: $d_g(\cdot,\cdot)$  is the geodesic distance of $g$ and  $\delta_g(x,y)$ is the measure distance defined by
	\[
	\delta_g(x,y) := \left(\int_{B_{x,y}} e^{nu}\,dz\right)^{\frac{1}{n}}
	\]
	with  $B_{x,y} = B_{|x-y|/2}\left(\frac{x+y}{2}\right)$ the smallest Euclidean ball containing $x$ and $y$.
	
	Naturally this inequality leads us to ask  when those distances are equivalent. Clearly this is not always true.  David and Semmes \cite{DS} introduced  a \emph{strong $A_\infty$ weight} if they are equivalent, that is, the reverse inequality
	\begin{equation}\label{delta_g leq Cd_g}
		\delta_g(x,y) \leq Cd_g(x,y),
	\end{equation}
	also holds true for some constant $C > 0$.
	
	It is well known that the doubling property of the volume measure is important to do analysis on the manifolds.  This crucial property for $A_\infty$ weights has been found by Semmes \cite{Semmes}: that is, 
	\begin{equation}\label{doubling property}
		\int_{B_{2r}(x_0)} e^{nu}\,dx \leq C \int_{B_r(x_0)} e^{nu}\,dx,
	\end{equation}
	where $C > 0$ is independent of $x_0 \in \mathbb{R}^n$ and $r > 0$.

	Get back to  our  normal metric $g=e^{2u}|dx|^2$ on $\mr^n$. Bonk, Heinonen, and Saksman  \cite{B-H-S} use  quasi-conformal flows to prove  that $e^{nu}$ is a strong $A_\infty$ weight if its total $Q^{(n)}_g$-curvature is relatively small. A few years later, Wang \cite{Wang IMRN} extends their result to the optimal assumption on $Q^{(n)}_g$-curvature integral (See also  \cite{Sire-Wang}, \cite{Wang Yi} for more related topics):
	
	\begin{theorem}\label{thm:Wang's theorem}(See Corollary 1.7 in \cite{Wang IMRN})
		Let $g=e^{2u}|dx|^2$ on $\mr^n$  with $n\geq 4$ an even integer be a smooth conformal metric. 
		Suppose that the metric $g$ is  normal and its $Q^{(n)}_g$ curvature satisfies
		$$\int_{\mr^n}Q^{(n)}_ge^{nu}\ud x<\frac{(n-1)!|\mathbb{S}^n|}{2}.$$ Then $e^{nu}$ is a strong $A_\infty$ weight.
	\end{theorem}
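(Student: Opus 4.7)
Since the hypothesis yields $\alpha_0 := \frac{2}{(n-1)!|\mathbb{S}^n|}\int_{\mathbb{R}^n} Q^{(n)}_g e^{nu}\,dx < 1$ strictly, the plan is to exploit the normal representation $u(x) = \mathcal{L}(Q^{(n)}_g e^{nu})(x) + C$ together with the potential estimates of Section~\ref{sec:log potential} to establish two ingredients: (a) the measure $e^{nu}dx$ is an ordinary $A_\infty$ weight, i.e.\ it satisfies the reverse Jensen inequality on every Euclidean ball; and (b) the measure distance $\delta_g$ is equivalent to the geodesic distance $d_g$. Combined, these give the strong $A_\infty$ property.

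For (a), the strategy is to bound $\fint_B e^{nu}\,dx \leq C\exp\!\left(\fint_B nu\,dx\right)$ on an arbitrary ball $B=B_r(x_0)$ with a constant independent of $B$. Splitting the potential into contributions from the doubled ball $2B$ and its complement, the outside part differs on $B$ by a bounded additive error (because $|x-y|$ is comparable to $|x_0-y|$ there and $Q^{(n)}_g e^{nu}\in L^1$). The inside part is handled by a ball-version of Lemma~\ref{lem:e^ku=ek bar u}: Jensen's inequality in the form used there, applied with the exponent $p = \frac{2n\|(Q^{(n)}_g)^+\chi_{2B}e^{nu}\|_1}{(n-1)!|\mathbb{S}^n|}$, requires $p<n$, which is exactly guaranteed by the hypothesis $\alpha_0<1$. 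This is the structural reason the strict inequality enters. The doubling property \eqref{doubling property} then follows from an adaptation of the same computation, with the pair of balls $(B,2B)$ replaced by $(B,4B)$.

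For (b), the inequality $d_g\leq C\delta_g$ is Semmes' result \eqref{d_g leq delta_g}, so the main obstacle is the reverse $\delta_g\leq C d_g$. My approach would be to chain a $g$-geodesic $\gamma$ from $p$ to $q$ by a controlled number of Euclidean balls on each of which the oscillation of $u$ is bounded: the $A_\infty$ property from (a) combined with the John--Nirenberg theorem shows $u\in \mathrm{BMO}$ with uniformly bounded semi-norm, and hence $e^{nu}$ has the reverse H\"older property, allowing one to estimate $\delta_g(p,q)^n$ by the sum of $e^{nu}$-measures of these balls and in turn by their $g$-diameters to the $n$th power. The hard part is controlling the number of balls needed and ensuring bounded overlap; this again uses $\alpha_0<1$, which prevents $u$ from drifting by more than $-\alpha_0\log|x|$ at infinity (via Lemma~\ref{lem:barLf}) so that the metric does not collapse Euclidean distances too drastically.

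An alternative, and in some ways cleaner, route would be to follow a quasi-conformal flow argument: interpolate between $|dx|^2$ and $g$ via $g_t=e^{2tu}|dx|^2$, realize this as the pullback of Euclidean metric under a family of homeomorphisms $\Phi_t$, show the dilatation of $\Phi_t$ is uniformly bounded (this bound degenerates precisely as $\alpha_0\to 1$), and then invoke Mori's distortion theorem for quasi-conformal maps to transfer the Euclidean distance-volume comparability to $g$. Either way, the quantitative threshold $\alpha_0<1$ is unavoidable, as the case $\alpha_0=1$ corresponds to the cylindrical metric where $\delta_g$ and $d_g$ are genuinely incomparable.
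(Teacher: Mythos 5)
The paper does not prove this statement; it imports it verbatim as Corollary~1.7 of Wang~\cite{Wang IMRN}, which in turn rests on the quasi-conformal flow machinery of Bonk--Heinonen--Saksman~\cite{B-H-S}. Your second, ``alternative'' route is therefore the one that actually exists in the literature; your primary route is a genuine attempt at something different, and it has two substantive gaps.

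In step~(a) you claim the exponent $p=\frac{2n\|(Q^{(n)}_g)^+\chi_{2B}e^{nu}\|_{L^1}}{(n-1)!|\mathbb{S}^n|}$ satisfies $p<n$ ``exactly because'' $\alpha_0<1$. That implication fails in general: $\alpha_0<1$ bounds only the \emph{signed} integral $\int Q^{(n)}_ge^{nu}\,dx$, while $p$ is governed by $\int(Q^{(n)}_g)^+e^{nu}\,dx$, which can be arbitrarily large even when $\alpha_0<1$; the Jensen/mean-value device of Lemma~\ref{lem:e^ku=ek bar u} and Lemma~\ref{lem: mean value} (which moreover needs $p\leq n-2$, not just $p<n$, and is formulated on spheres centered at the origin rather than on arbitrary Euclidean balls) is simply not available under the stated hypotheses. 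Step~(b), the bound $\delta_g\leq Cd_g$, is the entire content of ``strong'' $A_\infty$ and is where all the difficulty lives; your chaining plan is a statement of intent rather than a proof --- the chain is never constructed, its cardinality is never estimated, and bounded overlap is never verified. Invoking Lemma~\ref{lem:barLf} only controls spherical averages centered at the origin and gives no uniform control of the oscillation of $u$ over an arbitrary Euclidean ball encountered along a generic $g$-geodesic, which is precisely what a chaining argument would need. The argument that does work is the one you sketch last: interpolate $g_t=e^{2tu}|dx|^2$, realize the flow by a family of quasi-conformal homeomorphisms whose dilatation degenerates exactly as $\alpha_0\to1$, and apply a quasi-conformal distortion theorem. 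But carrying that out is reproving Wang's theorem, not filling in a proof that the paper left implicit.
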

	
	As a consequence of this theorem, we observe that  $e^{n\bar{u}}$ is also a strong $A_\infty$ weight. 
	\begin{lemma}\label{lem:strong Ainfty for bar u}
		Under the same assumption as in Theorem  \ref{thm:Wang's theorem}. Then, $e^{n\bar u}$ is a strong $A_\infty$ weight.
	\end{lemma}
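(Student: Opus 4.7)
The strategy is to transfer the strong $A_\infty$ property from $e^{nu}$ (Theorem \ref{thm:Wang's theorem}) to $e^{n\bar u}$ using two basic identities: (a) Fubini's theorem in polar coordinates gives $\int_{B_R(0)} u\,\ud x = \int_{B_R(0)} \bar u\,\ud x$ for every $R>0$; and (b) Lemma \ref{lem:e^ku=ek bar u} gives $\fint_{\partial B_r(0)} e^{ku}\,\ud\sigma = e^{k\bar u(r)+o(1)}$ as $r\to\infty$.

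\textbf{Step 1 (origin-centered balls).} Integrating (b) against $r^{n-1}\,\ud r$ from $0$ to $R$, and absorbing the bounded core $r\le r_0$ via local regularity of $u$, yields that $\int_{B_R(0)} e^{n\bar u}\,\ud x$ and $\int_{B_R(0)} e^{nu}\,\ud x$ differ only by a multiplicative constant that is uniform in $R$. Combined with (a), which implies $\fint_{B_R(0)} n\bar u\,\ud x = \fint_{B_R(0)} nu\,\ud x$, and the $A_\infty$ condition for $e^{nu}$ from Theorem \ref{thm:Wang's theorem}, this delivers the $A_\infty$ condition for $e^{n\bar u}$ on every ball centered at the origin.

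\textbf{Step 2 (general balls).} Since $u=\mathcal{L}(Q_g^{(n)}e^{nu})+C$ with $Q_g^{(n)}e^{nu}\in L^1(\mr^n)$, Lemma \ref{lem:int 1/|x-y|^p leq r^-p} with $p=1$ yields $\fint_{\partial B_r(0)}|\nabla u|\,\ud\sigma\le C/r$, hence $|\bar u'(r)|\le C/r$. For a ball $B_R(x_0)$ with $|x_0|\ge 2R$, this forces $\bar u(|x|)$ to oscillate by at most $C\log 3$ on $B_R(x_0)$, so $e^{n\bar u(|x|)}$ varies only by a bounded multiplicative factor there and the $A_\infty$ condition is automatic. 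For $|x_0|<2R$, the inclusion $B_R(x_0)\subset B_{3R}(0)$ together with the same oscillation bound (used now to control the difference of log-averages) reduces the $A_\infty$ condition to Step 1.

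\textbf{Step 3 (distance comparison).} Exploit the radial form of $\bar g=e^{2\bar u(|x|)}|\ud x|^2$: geodesic distance along a radial ray is given by the explicit one-dimensional integral $\int e^{\bar u(r)}\,\ud r$, and $d_{\bar g}(x,y)$ in general dominates its radial component. Step 1 gives $\delta_{\bar g}(x,y)\approx \delta_g(x,y)$ when $B_{x,y}$ is origin-centered, and the oscillation bound of Step 2 handles the remaining cases. Chaining with the strong $A_\infty$ inequality $\delta_g\le C\,d_g$ from Theorem \ref{thm:Wang's theorem} and a radial-path comparison of $d_g$ and $d_{\bar g}$ (which averages $e^u$ against $e^{\bar u}$ via Lemma \ref{lem:e^ku=ek bar u}) produces $\delta_{\bar g}\le C\,d_{\bar g}$, completing the strong $A_\infty$ condition for $e^{n\bar u}$.

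The main technical obstacle is the subcase $|x_0|<2R$ of Step 2: one must show that $\fint_{B_R(x_0)}\bar u\,\ud x$ and $\fint_{B_{3R}(0)}\bar u\,\ud x$ differ by a uniformly bounded amount despite the potential logarithmic growth of $\bar u(r)$ near infinity and the possibility that $B_R(x_0)$ contains the origin. The derivative bound $|\bar u'(r)|\le C/r$ is precisely the threshold estimate needed to make this ratio uniform after polar integration, so careful bookkeeping of the contribution from the origin region versus the tail is required.
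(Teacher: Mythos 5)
Your approach is genuinely different from the paper's and, as written, has a critical gap in Step 3 that I do not think can be closed by the tools you invoke. The paper does not transfer the strong $A_\infty$ property from $e^{nu}$ to $e^{n\bar u}$; instead it observes that $\bar u$ itself is a normal potential. Specifically, $(-\Delta)^{n/2}\bar u(r)=\fint_{\partial B_r(0)}Q^{(n)}_ge^{nu}\,\ud\sigma$, so $\bar g=e^{2\bar u}|\ud x|^2$ has finite total $Q$-curvature with
$$\int_{\mr^n}Q^{(n)}_{\bar g}e^{n\bar u}\,\ud x=\int_{\mr^n}Q^{(n)}_g e^{nu}\,\ud x<\frac{(n-1)!|\mathbb{S}^n|}{2},$$
and it is normal by the logarithmic growth estimate plus Corollary~\ref{cor:Liouville}; one then applies Theorem~\ref{thm:Wang's theorem} directly to $\bar g$. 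That is the whole proof, and it avoids every difficulty you encounter.

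The gap in your argument is Step 3, the verification of $\delta_{\bar g}(x,y)\le C\,d_{\bar g}(x,y)$. You need this for \emph{all} pairs $(x,y)$, and the objects you want to compare are infima over competing paths and arbitrary enclosing balls, neither of which respects the origin-centered symmetry you exploit in Steps 1--2. Concretely: (i) the inequality $\delta_g\le C\,d_g$ from Theorem~\ref{thm:Wang's theorem} refers to the geodesic distance of $g$, whose minimizing path need not be radial, so "a radial-path comparison of $d_g$ and $d_{\bar g}$" does not estimate $d_g$ from above; (ii) Lemma~\ref{lem:e^ku=ek bar u} compares spherical averages of $e^{ku}$ with $e^{k\bar u(r)}$, but the arc-length integrand $e^u$ along a single ray is not controlled by its spherical average without additional information; (iii) $\delta_{\bar g}(x,y)$ is defined via the Euclidean ball $B_{x,y}$, which is generically not origin-centered, so Step 1 does not directly give $\delta_{\bar g}(x,y)\approx\delta_g(x,y)$, and the oscillation bound $|\bar u'(r)|\le C/r$ only gives pointwise comparability of $e^{n\bar u}$ on $B_{x,y}$ when $B_{x,y}$ lies in an annulus away from the origin. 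None of these items is addressed beyond the phrase "chaining." You also explicitly flag the $|x_0|<2R$ case of Step 2 as requiring "careful bookkeeping" that you have not carried out. Given that the statement you want is precisely the conclusion of Wang's theorem, the right move is to check that $\bar g$ satisfies its hypotheses, which the averaging identity $(-\Delta)^{n/2}\bar u=\overline{(-\Delta)^{n/2}u}$ makes almost immediate.
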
	
	
	\begin{proof}
		Consider a new conformal metric $\bar g=e^{2\bar u}|d x|^2$. First we notice that 
		\begin{equation}\label{bar -Delta^n/2u}
			(-\Delta)^{\frac{n}{2}}\bar u(r)=\fint_{\partial B_r(0)}(-\Delta)^{\frac{n}{2}}u\ud\sigma=\fint_{\partial B_r(0)}Q_g^{(n)}e^{nu}\ud x.
		\end{equation}
		Thus,  there holds
		\begin{align*}
			\int_{\mr^n}|(-\Delta)^{\frac{n}{2}}\bar u(|x|)|\ud x\leq & \int_0^\infty |\partial B_r(0)|\cdot|(-\Delta)^{\frac{n}{2}}\bar u(r)|\ud r\\
			=&\int_0^\infty\left|\int_{\partial B_r(0)}Q_g^{(n)}e^{nu}\ud\sigma \right|\ud r\\
			\leq  &\int_{\mr^n}|Q_g^{(n)}|e^{nu}\ud x<+\infty.
		\end{align*}
		Thus $\bar g$ also has finite total $Q^{(n)}_g$-curvature. Since $g$ is normal, Lemma \ref{lem:barLf} yields that , for $r$ sufficiently large,
		$$\bar u(r)=(-\alpha_0+o(1))\log r$$
		where $\alpha_0$ is same as that in \eqref{alpha_0}.
		Using the above  estimate, one has , for $r$ large,
		$$\fint_{B_r(0)}|\bar u(|x|)|\ud x\leq C\log r,  \quad r\gg1. $$
		This implies that $\bar g$ is also a normal metric by using Lemma \ref{lem:barLf} and Corollary \ref{cor:Liouville}.
		Using \eqref{bar -Delta^n/2u}, one has
		\begin{align*}
			\int_{\mr^n}Q_{\bar g}^{(n)}e^{n\bar u}\ud x=&\int_{\mr^n}(-\Delta)^{\frac{n}{2}}\bar u\ud x\\
			=&\int^\infty_0\int_{\partial B_r(0)}Q^{(n)}_ge^{nu}\ud\sigma\ud r\\
			=&\int_{\mr^n}Q_g^{(n)}e^{nu}\ud x<\frac{(n-1)!|\mathbb{S}^n|}{2}
		\end{align*}
		Then, applying Theorem \ref{thm:Wang's theorem}, we show that $e^{n\bar u}$ is also a strong $A_\infty$ weight.
	\end{proof}

	Using the properties of strong $A_\infty$ weight, a distance comparison identity is established in \cite{Li 23 Q-curvature} which plays an important role during the proof of gap theorems for $Q^{(2k)}_g$-curvatures later.
	
	\begin{theorem}\label{thm:length identity}(Theorem 1.4 in \cite{Li 23 Q-curvature})
		Given a conformal metric $g=e^{2u}|dx|^2$ on $\mr^n$  where $n\geq 4$ is an even integer with finite total $Q^{(n)}_g$-curvature. 
		Supposing that $g$ is a normal metric, then for any fixed point $p_0\in\mr^n$, there holds
		\begin{equation}\label{distance comparison identity}
			\lim_{|x| \to \infty}\frac{\log d_g(x,p_0)}{\log |x-p_0|}=\max\{1-\alpha_0, 0\}.
		\end{equation}
	\end{theorem}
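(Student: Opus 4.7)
The plan is to extract the asymptotics of $d_g(x,p_0)$ from the growth of the $g$-volume of Euclidean balls, with the driving input being $\bar u(r)\sim -\alpha_0\log r$ from Lemma \ref{lem:barLf}. Take $p_0=0$ without loss of generality. Applying Lemma \ref{lem:e^ku=ek bar u} with $k=n$ yields $\fint_{\partial B_r(0)}e^{nu}\,d\sigma=e^{n\bar u(r)+o(1)}$, so integration in polar coordinates gives
$$\log V_g(B_R(0))=\max\{n(1-\alpha_0),0\}\log R+o(\log R).$$

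In the main case $\alpha_0<1$, the hypothesis becomes $\int Q^{(n)}_g e^{nu}\,dx<\tfrac{(n-1)!|\mathbb{S}^n|}{2}$, so Theorem \ref{thm:Wang's theorem} gives that $e^{nu}$ is a strong $A_\infty$ weight. The doubling property \eqref{doubling property} together with the inclusions $B_{|x|/2}(x/2)\subset B_{|x|}(0)\subset B_{3|x|/2}(x/2)$ yields $\delta_g(x,0)^n=V_g(B_{|x|/2}(x/2))\asymp V_g(B_{|x|}(0))$, and the two-sided Semmes comparison \eqref{d_g leq delta_g}, \eqref{delta_g leq Cd_g} yields $d_g(x,0)\asymp\delta_g(x,0)$. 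Combined with the above volume asymptotics, $\log d_g(x,0)/\log|x|\to 1-\alpha_0$.

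For the residual case $\alpha_0\geq 1$, Wang's theorem is no longer directly applicable, but the target limit is only $0$. The plan is a reduction: choose $c>\alpha_0-1$ and set $\tilde u(x)=u(x)+\tfrac{c}{2}\log(1+|x|^2)$. Since $\log(1+|x|^2)$ is itself a normal conformal factor with total $Q$-charge $-(n-1)!|\mathbb{S}^n|$, $\tilde u$ is normal with parameter $\tilde\alpha_0=\alpha_0-c<1$; the previous case yields $\log d_{\tilde g}(x,0)=(1-\tilde\alpha_0)\log|x|+o(\log|x|)$. Restricting attention to $\tilde g$-geodesics confined to $B_{2|x|}(0)$ and using $e^{u}=e^{\tilde u}(1+|x|^2)^{-c/2}$ along such paths transfers the estimate to $\log d_g(x,0)\leq (1-\alpha_0)\log|x|+o(\log|x|)$, forcing $\limsup\log d_g/\log|x|\leq 0$ since $\alpha_0\geq 1$. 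For the matching lower bound $\liminf\log d_g(x,0)/\log|x|\geq 0$, I would apply Lemma \ref{lem:e^ku=ek bar u} with $k=-n$ (giving $\fint_{\partial B_r(0)}e^{-nu}\,d\sigma\asymp r^{n\alpha_0+o(1)}$) and combine it with a Cauchy--Schwarz/co-area argument along minimizing curves crossing the spheres $\partial B_r(0)$, $1\leq r\leq |x|$; this type of bound is designed to prevent $d_g(x,0)$ from decaying faster than any power of $|x|$.

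The main obstacle is to justify the $\tilde g$-to-$g$ transfer and the co-area lower bound rigorously in the residual case, since $\tilde g$-minimizing paths need not a priori stay inside $B_{2|x|}(0)$, and the global $A_\infty$ theory used freely when $\alpha_0<1$ is only partially available. Both issues should be handled by decomposing minimizing sequences into radial and spherical components combined with the logarithmic potential estimates developed in Section \ref{sec:log potential}.
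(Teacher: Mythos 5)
Your argument in the main case $\alpha_0<1$ is sound: Lemma \ref{lem:e^ku=ek bar u} with $k=n$, combined with Lemma \ref{lem:barLf}, gives the volume asymptotics, and then Theorem \ref{thm:Wang's theorem} together with the doubling/inclusion comparison reduces $d_g$ to the $g$-volume of Euclidean balls. The paper itself cites this statement from \cite{Li 23 Q-curvature}, so there is no in-paper proof to compare against; judged on its own, however, the residual case $\alpha_0\geq 1$ contains a genuine gap, which you flag, though its resolution is considerably simpler than you anticipate.

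For the upper bound, the claim that restricting to paths in $B_{2|x|}(0)$ "transfers the estimate to $\log d_g(x,0)\leq(1-\alpha_0)\log|x|+o(\log|x|)$" is not justified and is likely false: along any curve from $0$ to $x$, the factor $(1+|y|^2)^{-c/2}$ is largest near the origin and equals $1$ there, so pointwise you only have $e^{u(y)}\leq e^{\tilde u(y)}$ (valid globally for $c>0$, so the worry about the $\tilde g$-geodesic escaping $B_{2|x|}(0)$ is moot). This yields $d_g\leq d_{\tilde g}$ and hence $\limsup\log d_g/\log|x|\leq 1-\alpha_0+c$ for every $c>\alpha_0-1$; the intermediate $(1-\alpha_0)\log|x|$ estimate would force $d_g(x,0)\to 0$ when $\alpha_0>1$, which cannot happen. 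But you do not need it: letting $c\downarrow\alpha_0-1$ already gives $\limsup\leq 0$. You should also record explicitly why $\tfrac{c}{2}\log(1+|x|^2)$ is a logarithmic potential with the stated charge, namely $(-\Delta)^{n/2}\log\tfrac{1+|x|^2}{2}=-(n-1)!\bigl(\tfrac{2}{1+|x|^2}\bigr)^n$, whose integral is $-(n-1)!|\mathbb{S}^n|$, and the difference from its potential is polyharmonic of logarithmic spherical average growth, hence constant by Corollary \ref{cor:Liouville}. For the lower bound in the residual case, the proposed $k=-n$ potential estimate plus co-area is unnecessary: any curve from $0$ to $x$ with $|x|\geq 1$ crosses $\partial B_1(0)$, so $d_g(x,0)\geq d_g(0,\partial B_1(0))>0$ by continuity and positivity of $e^u$ on $\overline{B_1(0)}$, which already gives $\liminf\log d_g(x,0)/\log|x|\geq 0$. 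With these corrections the proposal is complete.
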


	\section{Average growth  rate of $\sigma_k$ and  $Q^{(2k)}_g$}\label{sec: proof of integral growth}
	
	Let us start with a calculus lemma:
	\begin{lemma}\label{lem:nabal^kvarphi}
		For any $\varphi\in C^{k}(\mr^n)$ and integer $k\geq 1$ , there holds
		$$|\nabla^k e^{\varphi}|\leq C(n,k)e^{\varphi}\sum^{k}_{i=1}|\nabla^i \varphi|^{\frac{k}{i}}$$
		where $C(n,k)$ is a positive constant depending on $k,n$.
	\end{lemma}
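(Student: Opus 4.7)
The plan is a standard Fa\`a di Bruno type argument combined with Young's inequality. First, by induction on $k$ (or directly by the Fa\`a di Bruno formula), one shows that $\nabla^k e^{\varphi}$ admits the expansion
$$\nabla^k e^{\varphi}=e^{\varphi}\sum_{\pi\in\Pi_k}c_\pi\,\nabla^{i_1(\pi)}\varphi\otimes\cdots\otimes\nabla^{i_{\ell(\pi)}(\pi)}\varphi,$$
where the sum runs over a finite family of multi-indices $(i_1,\ldots,i_\ell)$ with $1\le i_j\le k$ and $i_1+\cdots+i_\ell=k$, and $c_\pi$ are universal combinatorial constants depending only on $n$ and $k$. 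The inductive step is one line: differentiate $\nabla^{k-1}e^\varphi=e^\varphi P_{k-1}$ and use the product rule, noting that each new term either multiplies a factor $\nabla^{i_j}\varphi$ by $\nabla\varphi$ (raising $i_j$-weight by $1$) or replaces such a factor by $\nabla^{i_j+1}\varphi$; in both cases the total weight increases by exactly $1$.

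Taking norms and pulling $e^\varphi$ outside, it then suffices to bound each monomial $\prod_{j=1}^{\ell}|\nabla^{i_j}\varphi|$ by $C\sum_{i=1}^k|\nabla^i\varphi|^{k/i}$ whenever $\sum_{j} i_j=k$. Writing
$$\prod_{j=1}^{\ell}|\nabla^{i_j}\varphi|=\prod_{j=1}^{\ell}\bigl(|\nabla^{i_j}\varphi|^{k/i_j}\bigr)^{i_j/k},\qquad \sum_{j=1}^{\ell}\frac{i_j}{k}=1,$$
the weighted AM-GM (equivalently Young's) inequality gives
$$\prod_{j=1}^{\ell}|\nabla^{i_j}\varphi|\le \sum_{j=1}^{\ell}\frac{i_j}{k}|\nabla^{i_j}\varphi|^{k/i_j}\le \sum_{i=1}^{k}|\nabla^i\varphi|^{k/i},$$
and summing over the finitely many partitions $\pi$ absorbs the combinatorial constants into the claimed $C(n,k)$.

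The only subtle point, and where I would spend care, is verifying that the inductive step indeed preserves the homogeneity condition $\sum_j i_j=k$ so that the exponents $i_j/k$ form a probability vector for Young's inequality; this is what makes the exponents $k/i$ in the statement exactly right. Everything else is bookkeeping, and since the number of multi-indices appearing in $P_k$ depends only on $k$ and $n$, no growth assumption on $\varphi$ is needed.
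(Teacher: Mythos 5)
Your proposal is correct and follows essentially the same route as the paper: expand $\nabla^k e^\varphi$ via the Fa\`a di Bruno (product-rule induction) formula, then apply Young's inequality (weighted AM-GM) with the weights $i_j/k$ to each monomial and reorganize. The only difference is cosmetic: you phrase the expansion in partition notation while the paper indexes it by the tuple lengths, and you make the probability-vector condition $\sum_j i_j/k = 1$ explicit, which the paper leaves implicit in its invocation of Young's inequality.
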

	
	\begin{proof}
		Via a direct computation, one has
		$$\nabla^ke^{\varphi}=e^{\varphi} \sum_{i=1}^k \sum_{\substack{{a_{i,1}+\cdots+ a_{i,i}}=k\\  a_{i,j}\in\{1, 2, \cdots, k\}}} C(a_{i,1},\cdots, a_{i,i}) \nabla^{a_{i,1}}\varphi \cdots \nabla^{a_{i,i}}\varphi$$
		where $C(a_{i,1},\cdots, a_{i,i})$ are some constants. Then, applying Young's inequality, one has
		$$|\nabla^ke^{\varphi}|\leq C(n,k)e^{\varphi}\sum_{i=1}^k\sum_{\substack{{a_{i,1}+\cdots+ a_{i,i}}=k\\  a_{i,i}\in \{1, 2, \cdots, k\}}} \left(|\nabla^{a_{i,1}}\varphi|^{\frac{k}{a_{i,1}}}+\cdots+|\nabla^{a_{i,i}}\varphi|^{\frac{k}{a_{i,i}}}\right).$$
		Reorganize the terms to get:
		$$|\nabla^k e^{\varphi}|\leq C(n,k)e^{\varphi}\sum^{k}_{i=1}|\nabla^i \varphi|^{\frac{k}{i}}.$$
	\end{proof}
	
	\begin{lemma}\label{lem:int_B_r Q^2k)p}
		Let  a normal metric $g=e^{2u}|dx|^2$ on $\mr^n$  with $n\geq 4$ an even integer be given. For each natural number $k \leq \frac{n-2}{2}$ and each  real number $p$ with restriction $1\leq p<\frac{n-1}{2k}$, there exists a positive constant $ C$ such that the following estimate holds: for $r \gg 1$,
		$$\int_{B_r(0)}|Q_g^{(2k)}|^pe^{nu}\ud x\leq C+C\int^r_1s^{n-2kp-1}e^{(n-2kp)\bar u(s)}\ud s.$$
	\end{lemma}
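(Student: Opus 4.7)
Since $g$ is normal, equation \eqref{normal-metric-def} gives $u=\mathcal{L}(f)+C$ with $f=Q_g^{(n)}e^{nu}\in L^{1}(\mr^n)\cap L^{\infty}_{loc}(\mr^n)$. In particular $\nabla^{i} u=\nabla^{i}\mathcal{L}(f)$ for every $i\geq 1$ and $\bar{u}(r)=\overline{\mathcal{L}(f)}(r)+C$, so Lemma \ref{lem:equ nabla^k Lf} applies directly to $u$ in the form
$$\fint_{\partial B_s(0)}e^{qu}|\nabla^{i}u|^{P/i}\,\ud\sigma\leq Cs^{-P}e^{q\bar{u}(s)}$$
whenever $1\leq i\leq n-2$, $0<P<n-1$, $q>0$.

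Next I would convert the pointwise quantity $|Q_g^{(2k)}|^{p}e^{nu}$ into something of this shape. The defining equation \eqref{Q_g^2k} yields
$$|Q_g^{(2k)}|\,e^{\frac{n+2k}{2}u}=\bigl|(-\Delta)^{k}e^{\frac{n-2k}{2}u}\bigr|\leq C\bigl|\nabla^{2k}e^{\frac{n-2k}{2}u}\bigr|,$$
and Lemma \ref{lem:nabal^kvarphi} applied to $\varphi=\frac{n-2k}{2}u$ gives
$$\bigl|\nabla^{2k}e^{\frac{n-2k}{2}u}\bigr|\leq C\,e^{\frac{n-2k}{2}u}\sum_{i=1}^{2k}|\nabla^{i}u|^{2k/i}.$$
Dividing and raising to the $p$-th power, then using the elementary inequality $(\sum_{i=1}^{2k}a_i)^{p}\leq C\sum_{i=1}^{2k}a_i^{p}$ for $p\geq 1$, I arrive at the pointwise key estimate
$$|Q_g^{(2k)}|^{p}e^{nu}\leq C\,e^{(n-2kp)u}\sum_{i=1}^{2k}|\nabla^{i}u|^{2kp/i}.$$

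Now I would integrate in polar coordinates and apply Lemma \ref{lem:equ nabla^k Lf} with the parameter choices $q=n-2kp$ and $P=2kp$, using derivative order $i$. The hypothesis $1\leq p<\frac{n-1}{2k}$ forces $0<2kp<n-1$, so both $P<n-1$ and $q=n-2kp>1>0$ are satisfied; and since $k\leq\frac{n-2}{2}$, the derivative order satisfies $1\leq i\leq 2k\leq n-2$, landing inside the admissible range of the lemma. This produces, for each $i$ and every $s>0$,
$$\int_{\partial B_s(0)}e^{(n-2kp)u}|\nabla^{i}u|^{2kp/i}\,\ud\sigma\leq Cs^{n-1-2kp}e^{(n-2kp)\bar{u}(s)}.$$

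Finally I would sum over $i\in\{1,\dots,2k\}$ and integrate in $s\in[0,r]$, splitting as $\int_{0}^{1}+\int_{1}^{r}$. The first piece is bounded by a constant because $u$ (and hence all its derivatives) is smooth on the fixed ball $B_{1}(0)$, while the second piece gives exactly the stated tail $C\int_{1}^{r}s^{n-2kp-1}e^{(n-2kp)\bar{u}(s)}\ud s$. There is no serious obstacle here; the only care needed is to verify that the admissibility ranges for Lemma \ref{lem:equ nabla^k Lf} are met simultaneously for every derivative order $1\leq i\leq 2k$, which is what forces both the assumption $k\leq\frac{n-2}{2}$ and the upper bound $p<\frac{n-1}{2k}$.
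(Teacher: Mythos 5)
Your proposal is correct and follows essentially the same route as the paper: you apply Lemma \ref{lem:nabal^kvarphi} to convert $|Q_g^{(2k)}|^p e^{nu}$ into a sum of terms of the form $e^{(n-2kp)u}|\nabla^i u|^{2kp/i}$, then integrate in polar coordinates and invoke Lemma \ref{lem:equ nabla^k Lf} (justified by normality of $g$) for each $i$. The only difference is cosmetic — you spell out the intermediate bound $|(-\Delta)^k e^{\frac{n-2k}{2}u}| \leq C|\nabla^{2k} e^{\frac{n-2k}{2}u}|$ and the admissibility checks $2kp < n-1$, $2k \leq n-2$ explicitly, whereas the paper compresses these into a single chain of inequalities.
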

	\begin{proof}
		Make use  Lemma \ref{lem:nabal^kvarphi} to get the estimate:
		\begin{align*}
			|Q^{(2k)}_g|^p=&|e^{-\frac{n+2k}{2}u}(-\Delta)^{k}e^{\frac{n-2k}{2}u}|^p\\
			\leq &C\left(e^{-2ku}\sum_{i=1}^{2k}|\nabla^i u|^{\frac{2k}{i}}\right)^p\\
			\leq &Ce^{-2kpu}\sum_{i=1}^{2k}|\nabla^i u|^{\frac{2kp}{i}}.
		\end{align*}
		Since $g$ is normal, combine this estimate with  Lemma \ref{lem:equ nabla^k Lf} to deduce that
		\begin{align*}
			&\int_{B_r(0)}|Q^{(2k)}_g|^pe^{nu}\ud x\\
			\leq &C+ \int^r_1|\partial B_s(0)|\fint_{\partial B_s(0)}|Q^{(2k)}_g|^pe^{nu}\ud\sigma\ud s\\
			\leq &C+C \int^r_1s^{n-1}\fint_{\partial B_s(0)}e^{(n-2kp)u}\sum_{i=1}^{2k}|\nabla^i u|^{\frac{2kp}{i}}\ud\sigma\ud s\\
			\leq& C+C \int^r_1s^{n-1-2kp}e^{(n-2kp)\bar u(s)}\ud s.
		\end{align*}
		Thus, our proof is complete.
	\end{proof}
	
	Over a distance ball regarding to background metric, we can derive the growth for $2k th$-order $Q$-curvature.
	\begin{theorem}\label{thm: Q_2k growth on Euclidean-for-L^p}
		Let a complete and  conformal metric $g=e^{2u}|dx|^2$ on $\mr^n$  with the even dimension at least $4$ be given. Suppose its scalar curvature $R_g\geq 0$ outside a compact set and the negative part of $nth$-order Q-curvature satisfies $(Q^{(n)}_g)^-e^{nu}\in L^1(\mr^n)$.   Then,   for each fixed point $p_0$,  each integer  $ 1\leq k\leq \frac{n-2}{2}$ and each real number $1\leq p<\frac{n-1}{2k}$,  there holds
		$$
		\limsup_{r\to\infty}r^{2kp-n}\int_{B_r(p_0)}|Q_g^{(2k)}|^p\ud \mu_g<+\infty
		$$
		where $B_r(p_0)$ is the Euclidean ball centered at point $p_0$  with radius  $r$. 
	\end{theorem}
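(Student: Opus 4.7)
The plan is to reduce to Lemma \ref{lem:int_B_r Q^2k)p} and exploit a uniform upper bound on the spherical mean $\bar u(r)$ coming from the non-negativity of $R_g$ near infinity. Under the hypotheses of the theorem, Theorem \ref{thm: int-Q^n-between 0,1} together with Lemma \ref{lem: normal metric} ensures that $g$ is a normal metric, placing us squarely in the setting of Lemma \ref{lem:int_B_r Q^2k)p}. Note also that the range $1 \leq p < \frac{n-1}{2k}$ forces $n - 2kp > 0$, which is the sign condition that makes the whole argument go through.

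The second ingredient is an upper bound on $\bar u$. Since $R_g \geq 0$ outside a compact set, the negative part $R_g^-$ is compactly supported; in particular $R_g^- \in L^{\frac{2n}{n+2}}(\mathbb{R}^n, g)$, so Proposition \ref{prop:baru-upper-bound} gives $\overline{u^+}(r) \leq C$ for all $r \geq 0$. The pointwise inequality $u \leq u^+$ then yields $\bar u(r) \leq \overline{u^+}(r) \leq C$ uniformly in $r$, and combined with $n - 2kp > 0$ this produces the uniform bound $e^{(n-2kp)\bar u(s)} \leq C'$ independently of $s$.

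Inserting this uniform bound into Lemma \ref{lem:int_B_r Q^2k)p} yields
$$\int_{B_r(0)} |Q_g^{(2k)}|^p \, d\mu_g \leq C + C' \int_1^r s^{n-2kp-1} \, ds \leq C\, r^{n-2kp}$$
for $r$ sufficiently large. Since $B_r(p_0) \subset B_{2r}(0)$ whenever $r \geq |p_0|$, the same order of growth holds with $B_r(p_0)$ in place of $B_r(0)$. Multiplying by $r^{2kp-n}$ and passing to the $\limsup$ gives the desired finite bound.

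I do not anticipate a serious obstacle, since the machinery of Sections \ref{sec:log potential} and \ref{sec:non-negative R_g} already carries essentially all the information needed. The only conceptual point worth flagging is that one does \emph{not} need to track the precise logarithmic rate $\bar u(r) = -\alpha_0 \log r + o(\log r)$ for this Euclidean-ball statement: the one-sided bound $\bar u \leq C$ suffices because $n - 2kp > 0$ preserves inequalities in the exponent. This decouples the proof from any distance-comparison or strong $A_\infty$ weight machinery that would be required for the geodesic-ball analogue in Theorem \ref{thm:Q^2k_g growth_on_geodesic_ball}.
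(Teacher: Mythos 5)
Your proposal is correct and follows essentially the same route as the paper: establish normality via Theorem \ref{thm: int-Q^n-between 0,1} and Lemma \ref{lem: normal metric}, then combine Lemma \ref{lem:int_B_r Q^2k)p} with the uniform upper bound on $\bar u$ from Proposition \ref{prop:baru-upper-bound} and the sign condition $n-2kp>0$. The only additions over the paper's own exposition are the (correct) explicit check that a compactly supported $R_g^-$ lies in $L^{\frac{2n}{n+2}}(\mathbb{R}^n,g)$ and the trivial translation $B_r(p_0)\subset B_{2r}(0)$ to handle a general basepoint.
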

	\begin{proof}
		First of all, Theorem \ref{thm: int-Q^n-between 0,1} shows that  $Q^{(n)}_ge^{nu}\in L^1(\mr^n)$. Thus Lemma \ref{lem: normal metric} implies that  $g$ is a normal metric.  Now, applying Lemma \ref{lem:int_B_r Q^2k)p} and Proposition \ref{prop:baru-upper-bound},   we obtain, for $r\gg1$,
		\begin{align*}
			\int_{B_r(0)}|Q^{(2k)}_g|^pe^{nu}\ud x\leq &C+C\int^r_1s^{n-2kp-1}e^{(n-2kp)\bar u(s)}\ud s\\
			\leq &C+C\int^r_1s^{n-2kp-1}\ud s\\
			\leq &Cr^{n-2kp}.
		\end{align*}
		This completes the proof.
	\end{proof}

	This result roughly says that the average of $2kth$-order $Q$-curvature over the Euclidean ball decays at the order at least $2k$ in $L^p$. This is almost what we have claimed, the only difference is that the ball here is not the geodesic ball. Our claim is due to the observation  we presented in Theorem \ref{thm:length identity}. Now, we are ready to show our average growth rate of $Q^{(2k)}_g$-curvature over geodesic balls. Since our approach will rely on the properties of strong $A_\infty$ weights, we necessarily assume that $\alpha_0 < 1$.

	\begin{theorem}\label{thm:Q^2k_g L^p growth for normal metric}
		Once again let us consider a conformal complete normal metric $g=e^{2u}|dx|^2$ on $\mr^n$  with even dimension at least $4$.  Assume $\alpha_0 < 1$, that is, 
		$$\int_{\mathbb{R}^n}Q^{(n)}_g\ud \mu_g< \frac{(n-1)!|\mathbb{S}^n|}{2}.$$
		For each fixed point $p_0$, each natural number $k\leq \frac{n-2}{2}$ and  and any real number $1\leq p<\frac{n-1}{2k}$, there holds
		$$\limsup_{r\to\infty}r^{2kp-n}\int_{B_r^g(p_0)}|Q_g^{(2k)}|^p\ud\mu_g <+\infty.$$
		where $B_r^g(p_0)$ is the geodesic ball  respect to metric $g$ with radius $r$ centered at the point $p_0$. 
	\end{theorem}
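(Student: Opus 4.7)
The plan is to reduce the geodesic-ball estimate to a sharper Euclidean-ball estimate, exploiting the strong $A_\infty$ weight property of $e^{nu}$ which is guaranteed here by Theorem~\ref{thm:Wang's theorem} and the hypothesis $\alpha_0<1$. Since $e^{nu}$ is a strong $A_\infty$ weight, the distance comparisons \eqref{d_g leq delta_g} and \eqref{delta_g leq Cd_g} give $d_g\approx \delta_g$ uniformly. Combining this with the identity $\delta_g(x,p_0)^n=\mu_g(B_{|x-p_0|/2}((x+p_0)/2))$ and the doubling property \eqref{doubling property} of $\mu_g$, one sees that whenever $x\in B_r^g(p_0)$ we have $\mu_g(B_{|x-p_0|}(p_0))\leq Cr^n$. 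Setting $R_r:=\sup\{|x-p_0|:x\in B_r^g(p_0)\}$---finite by completeness of $(\mathbb{R}^n,g)$---we obtain $B_r^g(p_0)\subset B_{R_r}(p_0)$ with $\mu_g(B_{R_r}(p_0))\leq Cr^n$.

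The heart of the argument is the refined Euclidean estimate
$$\int_{B_R(p_0)}|Q_g^{(2k)}|^p\,d\mu_g\leq C\,\mu_g(B_R(p_0))^{(n-2kp)/n}+C,\qquad R\gg 1,\quad (\star)$$
which, once combined with the volume bound above, yields $\int_{B_r^g(p_0)}|Q_g^{(2k)}|^p\,d\mu_g\leq Cr^{n-2kp}+C$ and finishes the proof. To establish $(\star)$, I start from Lemma~\ref{lem:int_B_r Q^2k)p}, which bounds $\int_{B_R(p_0)}|Q_g^{(2k)}|^p d\mu_g$ by $C+C\int_1^R s^{n-2kp-1}e^{(n-2kp)\bar u(s)}\,ds$. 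Introducing $\phi(s):=se^{\bar u(s)}$ and $V(s):=\int_0^s r^{n-1}e^{n\bar u(r)}\,dr\approx\mu_g(B_s(p_0))$ (by Lemma~\ref{lem:e^ku=ek bar u}), the integrand rewrites algebraically as $V'(s)\phi(s)^{-2kp}$ since $V'(s)=s^{n-1}e^{n\bar u(s)}$. The crucial structural bound is
$$\phi(s)^n\geq c\,V(s),\qquad s\gg 1,\quad (\dagger)$$
which is equivalent to the polynomial growth $sV'(s)\geq cV(s)$ on the volume profile. Granted $(\dagger)$, one has $\phi^{-2kp}\leq CV^{-2kp/n}$, and a direct substitution $V\mapsto dV$ gives $\int_1^R V'(s)\phi(s)^{-2kp}\,ds\leq C\int_{V(1)}^{V(R)}V^{-2kp/n}\,dV\leq C\,V(R)^{(n-2kp)/n}$, establishing $(\star)$.

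The main obstacle is $(\dagger)$. I plan to deduce it from the strong $A_\infty$ property of the radial weight $e^{n\bar u(|x|)}$ proved in Lemma~\ref{lem:strong Ainfty for bar u}. For this radial model, the natural arclength function $D(s):=\int_0^s e^{\bar u(r)}\,dr$ satisfies $D(s)^n\approx V(s)$ via the measure-distance comparison supplied by strong $A_\infty$. A comparison $\phi(s)\approx D(s)$ for $s$ large---stemming from the asymptotics $\bar u(s)/\log s\to -\alpha_0>-1$ together with the regularity imposed by strong $A_\infty$, which prevents $e^{\bar u}$ from oscillating wildly between dyadic scales---then yields $\phi(s)^n\approx V(s)$ and hence $(\dagger)$. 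Controlling the constants uniformly through this chain, particularly past the $o(\log s)$ fluctuations allowed by Lemma~\ref{lem:barLf}, is the delicate technical point I expect to be the hardest to pin down.
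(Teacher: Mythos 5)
Your overall reduction (geodesic ball $\subset$ Euclidean ball, strong $A_\infty$ plus doubling giving $\mu_g(B_{R_r}(p_0))\leq Cr^n$, then a ``refined Euclidean estimate'' $(\star)$) is exactly the paper's scheme, and the outer layer is fine. The gap is in how you try to prove $(\star)$. You want the pointwise bound $(\dagger)$: $\phi(s)^n=s^n e^{n\bar u(s)}\geq c\,V(s)$, equivalently $s\,V'(s)\geq c\,V(s)$, and you attempt to get it from $\phi(s)\approx D(s):=\int_0^s e^{\bar u}\,dr$ and $D(s)^n\approx V(s)$. But $\phi(s)\gtrsim D(s)$ is a lower bound on the pointwise value $e^{\bar u(s)}$ in terms of its average over $[0,s]$, and neither the asymptotics $\bar u(s)=(-\alpha_0+o(1))\log s$ (the $o(1)$ is uncontrolled) nor the strong $A_\infty$ property of $e^{n\bar u}$ gives such a pointwise lower bound; $A_\infty$-type conditions are statements about integral averages (doubling, reverse H\"older) and do not preclude isolated downward dips of $e^{\bar u(s)}$ which would make $sV'(s)/V(s)$ arbitrarily small. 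You flag this yourself as the delicate step, and indeed it is where the argument does not close.

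The paper's proof avoids any pointwise inequality by applying H\"older's inequality directly to the integral from Lemma~\ref{lem:int_B_r Q^2k)p}: writing $s^{n-2kp-1}e^{(n-2kp)\bar u(s)}=\bigl(e^{\bar u(s)}\bigr)^{\frac{2kp}{n-1}}\bigl(s^{n-1}e^{n\bar u(s)}\bigr)^{\frac{n-1-2kp}{n-1}}$ and using the conjugate exponents $\frac{n-1}{2kp}$, $\frac{n-1}{n-1-2kp}$ gives
$$\int_1^R s^{n-2kp-1}e^{(n-2kp)\bar u}\,ds\leq D(R)^{\frac{2kp}{n-1}}\,V(R)^{\frac{n-1-2kp}{n-1}},$$
after which only the one-sided integral inequality $D(R)\leq C\,V(R)^{1/n}$ is needed --- and that is precisely $d_{\bar g}\leq C\,\delta_{\bar g}$ from Lemma~\ref{lem:strong Ainfty for bar u}, which is robust. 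Combined with Jensen ($V(R)\lesssim\mu_g(B_R)$) this yields $(\star)$. Replacing your $(\dagger)$ step with this H\"older trick repairs the proposal; the remainder of your argument is sound.
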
    
	
	\begin{proof}
		By setting up the coordinate system, we may assume without loss of generality that $p_0 = 0$.  Since $g$ is complete, as a set in $\mr^n$, $\overline {B^g_r(0)}$ must be compact. Thus there exists a point $y_r \in \partial {B^g_r}(0)$ such that  it will maximize the Euclidean norm:
		$$ |y_r| \geq |x| \quad \text{for all } x \in B^g_r(0). $$
		This immediately implies the inclusion
		\begin{equation}\label{geodesic ball included in E ball}
			B^g_r(0) \subset B_{|y_r|}(0).
		\end{equation}
		Moreover, by taking $r$ sufficiently large, we may assume $|y_r| \gg 1$.
		
		Applying Lemma \ref{lem:int_B_r Q^2k)p} together with H\"older's inequality, we obtain
		\begin{equation}\label{intB_r^g Q^2k}
			\int_{B_r^g(0)}|Q_g^{(2k)}|^p\ud\mu_g\leq C+C\left(\int^{|y_r|}_1e^{\bar u(s)}\ud s\right)^{\frac{2kp}{n-1}}\left(\int^{|y_r|}_1 s^{n-1}e^{n\bar u(s)}\ud s\right)^{\frac{n-1-2kp}{n-1}}.
		\end{equation}
		Under the aforementioned assumptions,  Theorem \ref{thm:Wang's theorem}  in conjunction with Lemma \ref{lem:strong Ainfty for bar u} establishes that  both  $e^{nu}$ and $e^{n \bar u}$ are  strong $A_\infty$ weights.  Furthermore, inequality \eqref{d_g leq delta_g} yields
		\begin{equation}\label{d_bar g leq delta bar_g}
			d_{\bar g}(0,y_r)=\int^{|y_r|}_0e^{\bar u}\ud s\leq C\delta_{\bar g}(0, y_r)\leq C\left(\int_{B_{|y_r|}(0)}e^{n\bar u}\ud x\right)^{\frac{1}{n}}.
		\end{equation}
		Making use of Jensen's inequality, one has
		\begin{equation}\label{e^nbar u leq e^nu}
			\int_{B_{|y_r|}(0)}e^{n\bar u}\ud x=\int^{|y_r|}_0|\ms^{n-1}|s^{n-1}e^{n\bar u(s)}\ud s\leq \int_{B_{|y_r|}(0)}e^{n u}\ud x.
		\end{equation}

		Thus, combining the estimates \eqref{d_bar g leq delta bar_g},  \eqref{e^nbar u leq e^nu} with \eqref{intB_r^g Q^2k}, we obtain that 
		\begin{equation}\label{intQ^2k leq e^nu}
			\int_{B_r^g(0)}|Q_g^{(2k)}|^p\ud\mu_g\leq C+C\left(\int_{B_{|y_r|}(0)}e^{n u}\ud s\right)^{\frac{n-2kp}{n}}.
		\end{equation}
		
		Recall that  $e^{nu}$ is a strong $A_\infty$ weight. This, together with \eqref{delta_g leq Cd_g}, implies 
		\begin{equation}\label{delta_g leq Cr}
			\delta_g(0,y_r)\leq Cd_g(0,y_r)=Cr.
		\end{equation}
		Use the doubling property \eqref{doubling property} to conclude:
		\begin{equation*}
			\int_{B_{2|y_r|}(\frac{y_r}{2})}e^{nu}\ud x\leq C\int_{B_{\frac{|y_r|}{2}}(\frac{y_r}{2})}e^{nu}\ud x \leq C\int_{B_{|y_r|}(0)}e^{nu}\ud x.
		\end{equation*}
		Observe that
		\begin{equation*}
			B_{|y_r|}(0)\subset B_{2|y_r|}(\frac{y_r}{2}).
		\end{equation*}
		Thus we have $$ \int_{B_{|y_r|}(0)}e^{nu}\ud x \leq \int_{B_{2|y_r|}(\frac{y_r}{2})}e^{nu}\ud x\leq C\int_{B_{\frac{|y_r|}{2}}(\frac{y_r}{2})}e^{nu}\ud x = C \delta_g(0,y_r)^n.$$
		Combine this estimates with \eqref{delta_g leq Cr} and \eqref{intQ^2k leq e^nu} to obtain: 
		$$	\int_{B_r^g(0)}|Q_g^{(2k)}|^p\ud\mu_g \leq C+Cr^{n-2kp}.$$
		Thus, the proof is complete.		
	\end{proof}

	\hspace{3in} 
	
	Now we are in position to finish the proofs of our theorems \ref{thm:Q^2k_g growth_on_geodesic_ball} and \ref{thm:Yau's conjecture}.
	
	\hspace{3in} 
	
	{\bf Proof of Theorem \ref{thm:Q^2k_g growth_on_geodesic_ball}:}
	
	With the help of Theorem~\ref{thm: int-Q^n-between 0,1}, we can easily conclude that  $Q^{(n)}_g e^{n u} \in L^1(\mathbb{R}^n)$. 
	Then, by Lemma~\ref{lem: normal metric}, we have that $g$ is a normal metric.  
	
	Based on the assumption $I_g > 0$, Corollary~\ref{cor:I_g-positive} shows that
	$$
	\int_{\mathbb{R}^n} Q^{(n)}_g \, d\mu_g < \frac{(n-1)! |\mathbb{S}^n|}{2}.
	$$
	
	Finally, applying Theorem~\ref{thm:Q^2k_g L^p growth for normal metric}, we finish our proof. \qed

	\hspace{3em}

Theorem \ref{thm:Yau's conjecture} is a direct consequence of the following result by using the estimate  \[|\sigma_k(g)|\leq |Ric_g|_g^k.\]
	
\begin{theorem}\label{thm:Ricci growth}
	Under the same assumptions as in Theorem \ref{thm: positive Ricci}, if  we further assume that  the  isoperimetric ratio $I_g$ is positive, for any $1\leq q<\frac{n-1}{2}$ and fixed point $p_0$,  there holds
	\[\lim_{r\to\infty}\sup r^{2q-n}\int_{B_r^g(p_0)}|Ric_g|^q_g\ud\mu_g<+\infty.\]
\end{theorem}
\begin{proof}
		With the same as in the proof of Theorem \ref{thm:Q^2k_g growth_on_geodesic_ball}, $g$ must be a normal metric.   
	Using \eqref{Ricci-equ} and Young's inequality, one has
	\begin{equation}\label{ricci upper bound}
	|Ric_g|_g^q \leq C(|\nabla^2 u|^q+|\nabla u|^{2q})e^{-2qu}.
	\end{equation}
	For any $r\geq 1$, using \eqref{ricci upper bound} and Lemma \ref{lem:equ nabla^k Lf}, one has
	\begin{align*}
		\int_{B_r(0)}	|Ric_g|_g^q\ud\mu_g\leq & C+C\int_1^rC(|\nabla^2 u|^q+|\nabla u|^{2q})e^{(n-2q)u}\ud x\\
		\leq &C+C\int^{r}_1r^{n-2q-1}e^{(n-2q)\bar u(r)}\ud r.
	\end{align*}
	With the help of the same argument as in the proof of Theorem \ref{thm:Q^2k_g L^p growth for normal metric}, we finish our proof. 
	\end{proof}

	\section{Gap theorems for $Q^{(2k)}_g$-curvature}\label{sec:gap theorem}
	
	Before establishing the gap theorems, we need the following crucial potential estimates.

	\begin{lemma}\label{lem:r^2-Deltau}(See Lemma 2.2 in \cite{Li 24 total Q-curvature})
		Consider the normal metric $g=e^{2u}|dx|^2$ on $\mr^n$ of even dimension at least $4$. Then, there holds
		$$	r^2\cdot \fint_{\partial B_r(0)}(-\Delta)u\ud\sigma \to(n-2)\alpha_0, \quad \mathrm{as}\; r\to\infty,$$
		with $\alpha_0$ is given by \eqref{alpha_0}.		
	\end{lemma}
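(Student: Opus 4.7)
The plan is to reduce the problem to a straightforward dominated convergence argument, exploiting the representation formula for a normal metric.

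Since $g$ is normal, $u(x)=\mathcal{L}(f)(x)+C$ with $f=Q^{(n)}_ge^{nu}\in L^1(\mathbb{R}^n)$, so the constant drops out under the Laplacian and we only need to analyze $(-\Delta)\mathcal{L}(f)$. Differentiating the integrand $\log(|y|/|x-y|)$ twice in $x$ gives the standard identity
\begin{equation*}
(-\Delta)\mathcal{L}(f)(x)=\frac{2(n-2)}{(n-1)!|\mathbb{S}^n|}\int_{\mathbb{R}^n}\frac{f(y)}{|x-y|^2}\,\ud y,
\end{equation*}
which is exactly the representation used in Section~\ref{sect:positive-Q_4}. Thus taking the spherical average and applying Fubini's theorem yields
\begin{equation*}
r^{2}\fint_{\partial B_r(0)}(-\Delta u)\,\ud\sigma=\frac{2(n-2)}{(n-1)!|\mathbb{S}^n|}\int_{\mathbb{R}^n}f(y)\,\Phi_r(y)\,\ud y,\qquad \Phi_r(y):=r^{2}\fint_{\partial B_r(0)}\frac{1}{|x-y|^2}\,\ud\sigma(x).
\end{equation*}

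The next step is to analyze $\Phi_r(y)$. For any fixed $y$ and $x\in\partial B_r(0)$, one has $|x-y|^2=r^2\bigl(1+O(|y|/r)\bigr)$ uniformly in $x$, so $\Phi_r(y)\to 1$ pointwise as $r\to\infty$. Moreover, since $n\geq 4$ implies $p=2$ lies in the admissible range $0<p<n-1$, Lemma~\ref{lem:int 1/|x-y|^p leq r^-p} gives $\Phi_r(y)\leq C(n,2)$ uniformly in $r$ and $y$. Hence the integrand in the displayed identity is dominated by $C(n,2)|f(y)|\in L^1(\mathbb{R}^n)$.

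I then invoke the dominated convergence theorem to pass to the limit under the integral, obtaining
\begin{equation*}
\lim_{r\to\infty}r^{2}\fint_{\partial B_r(0)}(-\Delta u)\,\ud\sigma=\frac{2(n-2)}{(n-1)!|\mathbb{S}^n|}\int_{\mathbb{R}^n}f(y)\,\ud y=(n-2)\alpha_0,
\end{equation*}
which is the desired conclusion. The only delicate point is producing the uniform dominating function, but this is precisely what Lemma~\ref{lem:int 1/|x-y|^p leq r^-p} provides; once that is in place, the argument is essentially a one-line application of Fubini followed by dominated convergence.
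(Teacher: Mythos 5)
Your proof is correct. The identity $(-\Delta)\mathcal{L}(f)(x)=\frac{2(n-2)}{(n-1)!|\mathbb{S}^n|}\int_{\mathbb{R}^n}\frac{f(y)}{|x-y|^2}\ud y$ is exactly \eqref{equ-Deltau}, the constant in the normal representation drops out under $\Delta$, Fubini applies since $f\in L^1$, the pointwise limit $\Phi_r(y)=\fint_{\partial B_1(0)}|\tilde x-y/r|^{-2}\ud\sigma(\tilde x)\to 1$ holds for each fixed $y$, and Lemma~\ref{lem:int 1/|x-y|^p leq r^-p} with $p=2<n-1$ (valid because $n\geq 4$) supplies the uniform bound $\Phi_r(y)\leq C(n,2)$ needed for dominated convergence, which then yields $(n-2)\alpha_0$.

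The paper itself does not reproduce a proof of this lemma; it cites Lemma~2.2 of \cite{Li 24 total Q-curvature}. The analogous estimates that are proved in this paper, namely Lemmas~\ref{lem:|y|^k/|x-y|^kud nu(y)to0}, \ref{lem:r^4Delta^2u}, and \ref{lem:r^4(Deltau)^2}, proceed by expanding $\frac{|x|^{2k}}{|x-y|^{2k}}$ algebraically (writing it as $1$ plus error terms involving $\frac{|y|^p}{|x-y|^p}$), isolating the leading contribution $\alpha_0$, and then killing each error term by splitting $\mathbb{R}^n$ into a large ball and its complement and invoking Lemma~\ref{lem: mean value} on the far part. Your route is more compact: instead of expanding, you recognize that the kernel $\Phi_r(y)$ converges pointwise and is uniformly bounded by Lemma~\ref{lem:int 1/|x-y|^p leq r^-p}, so the whole limit passage collapses into a single application of dominated convergence with no case analysis. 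Both approaches are sound; the expansion method generalizes more readily to higher powers (where it is reused in Lemmas~\ref{lem:r^4Delta^2u} and \ref{lem:r^4(Deltau)^2}), while yours is the cleaner proof of this particular statement.
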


	\begin{lemma}\label{lem:r^2abla-u^2}(See Lemma 2.4 in \cite{Li 24 total Q-curvature})
		Again we consider the normal metric $g=e^{2u}|dx|^2$ on $\mr^n$ with even dimenion at least $4$. Then, there holds
		$$	r^2\cdot \fint_{\partial B_r(0)}|\nabla u|^2d\sigma \to\alpha_0^2, \quad \mathrm{as}\; r\to\infty.$$
	\end{lemma}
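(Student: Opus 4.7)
The plan is to exploit the pointwise identity established in the derivation of $b(x)$ in the proof of Theorem \ref{thm:positive Q_4}: since $g$ is normal, writing $\ud\nu(y)=\frac{2}{(n-1)!|\ms^n|}Q^{(n)}_g(y)e^{nu(y)}\ud y$ as a finite signed measure on $\mr^n$ (with $\int\ud\nu=\alpha_0$), the representation $\nabla u(x)=-\int_{\mr^n}\frac{x-y}{|x-y|^2}\ud\nu(y)$, the polarization identity $2(x-y)\cdot(x-z)=|x-y|^2+|x-z|^2-|y-z|^2$, and \eqref{equ-Deltau} combine to give
\[
|\nabla u(x)|^2=\frac{\alpha_0}{n-2}(-\Delta u(x))-b(x),\qquad b(x)=\int_{\mr^n}\int_{\mr^n}\frac{|y-z|^2}{2|x-y|^2|x-z|^2}\ud\nu(y)\ud\nu(z).
\]
Averaging this identity over $\partial B_r(0)$ and multiplying by $r^2$, Lemma \ref{lem:r^2-Deltau} yields that the first term tends to $\frac{\alpha_0}{n-2}\cdot(n-2)\alpha_0=\alpha_0^2$. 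The task therefore reduces to proving $r^2\fint_{\partial B_r(0)}|b(x)|\ud\sigma\to 0$ as $r\to\infty$.

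For this, I would split the $(y,z)$-domain into $B_R(0)\times B_R(0)$ and its complement, where $R\gg 1$ is to be chosen after sending $r\to\infty$. On $B_R(0)\times B_R(0)$, for $r>2R$ and $x\in\partial B_r(0)$, one has $|y-z|\leq 2R$ and $|x-y|,|x-z|\geq r-R$, so the integrand is bounded by $4R^2(r-R)^{-4}$; integration against $\ud|\nu|\otimes\ud|\nu|$ together with the finiteness $|\nu|(\mr^n)<\infty$ give a contribution of order $R^2r^{-4}$, hence $r^2$ times this piece is $O(R^2r^{-2})\to 0$ for any fixed $R$. On the complement, I use the elementary inequality $|y-z|^2\leq 2|x-y|^2+2|x-z|^2$, yielding
\[
\frac{|y-z|^2}{|x-y|^2|x-z|^2}\leq \frac{2}{|x-y|^2}+\frac{2}{|x-z|^2}.
\]
Combined with Lemma \ref{lem:int 1/|x-y|^p leq r^-p} (with $p=2<n-1$, which gives $\fint_{\partial B_r(0)}|x-y|^{-2}\ud\sigma\leq Cr^{-2}$ uniformly in $y$) and Fubini's theorem, the corresponding contribution to $r^2\fint_{\partial B_r(0)}|b|\ud\sigma$ is uniformly bounded by $C|\nu|(\mr^n)\cdot|\nu|(\mr^n\setminus B_R(0))$, which tends to $0$ as $R\to\infty$ since $|\nu|$ is a finite measure.

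Taking $r\to\infty$ first (to kill the interior piece) and then $R\to\infty$ (to kill the exterior piece) gives $\lim_{r\to\infty}r^2\fint_{\partial B_r(0)}|b|\ud\sigma=0$, which combined with Lemma \ref{lem:r^2-Deltau} yields the claimed limit $\alpha_0^2$. The only delicate point is the order-of-limits argument in the splitting of $b$; the individual estimates themselves are routine consequences of the identities already established in the excerpt together with the finiteness of the signed measure $|\nu|$, which follows from the normality assumption and hence $Q^{(n)}_ge^{nu}\in L^1(\mr^n)$.
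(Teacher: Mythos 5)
Your proof is correct. The algebraic identity $|\nabla u(x)|^2=\frac{\alpha_0}{n-2}(-\Delta u(x))-b(x)$ is indeed a consequence of Fubini and the polarization identity, valid for a finite signed measure $\nu$ (normality of $g$ gives $\nu$ finite); in the paper that identity only appears in the proof of Theorem~\ref{thm:positive Q_4} under the stronger hypothesis $Q^{(n)}_g\geq 0$, but you are right that it needs no sign condition. Averaging and invoking Lemma~\ref{lem:r^2-Deltau} cleanly isolates the main term $\alpha_0^2$, and your two-scale splitting of the $b$-term is sound: the interior piece $B_R\times B_R$ gives $O(R^2 r^{-2})$ after multiplying by $r^2$, while the exterior piece is bounded uniformly in $r$ by $C\,|\nu|(\mr^n)\,|\nu|(\mr^n\setminus B_R)$ via $|y-z|^2\leq 2|x-y|^2+2|x-z|^2$, Fubini, and Lemma~\ref{lem:int 1/|x-y|^p leq r^-p} with $p=2<n-1$ (which requires exactly $n\geq 4$); the order-of-limits argument is the standard $\epsilon$-bookkeeping and works.

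The paper itself does not prove this lemma (it cites \cite{Li 24 total Q-curvature}), but the closest analogues it does prove, Lemmas~\ref{lem:r^4Delta^2u} and \ref{lem:r^4(Deltau)^2}, proceed by expanding $\frac{|x|^{2k}}{|x-y|^{2k}}$ as $1$ plus lower-order terms and then invoking Lemma~\ref{lem:|y|^k/|x-y|^kud nu(y)to0}. Your route is algebraically close to that (the same polarization identity is underneath both) but organized differently: you factor through the known limit for $r^2\fint(-\Delta u)\ud\sigma$ and the function $b(x)$ rather than expanding $\frac{|x|^2}{|x-y|^2}$ directly, and you handle the remainder by a domain split rather than by Lemma~\ref{lem:|y|^k/|x-y|^kud nu(y)to0}. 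The benefit of your version is that it reuses ingredients already present in the paper (Lemma~\ref{lem:r^2-Deltau}, the $b$-identity, Lemma~\ref{lem:int 1/|x-y|^p leq r^-p}), making the lemma self-contained within this paper's framework; the expansion approach of Lemmas~\ref{lem:r^4Delta^2u}--\ref{lem:r^4(Deltau)^2} has the advantage of scaling more systematically to the higher-order moments treated there.
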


	With the help of above two estimates, we are able to obtain the following growth rate  of the average scalar curvature over large  sphere.
	
	\begin{lemma}\label{lem:R_g limit on sphere}
		Let $g=e^{2u}|dx|^2$ on $\mr^n$  with $n\geq 4$  an even integer be a complete and conformal metric. Suppose that both $nth$-order Q-curvature  $Q^{(n)}_g$ and scalar curvature $R_g$ are non-negative. Then there holds
		$$r^2\fint_{\partial B_{r}(0)}R_{g}e^{2u}\ud \sigma\to(n-1)(n-2)\alpha_0(2-\alpha_0),  \quad \mathrm{as}\; r\to\infty. $$
	\end{lemma}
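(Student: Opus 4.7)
The plan is to reduce the claim to a direct application of Lemmas \ref{lem:r^2-Deltau} and \ref{lem:r^2abla-u^2} after expressing $R_g e^{2u}$ as a pointwise differential expression in $u$. First I would verify that the hypotheses place us in the normal-metric setting: since $Q^{(n)}_g \geq 0$, its negative part is trivially integrable, and $R_g \geq 0$ holds in particular outside a compact set, so Theorem \ref{thm: int-Q^n-between 0,1} yields $Q^{(n)}_g e^{nu}\in L^1(\mr^n)$, and then Lemma \ref{lem: normal metric} guarantees that $g$ is a normal metric. This is precisely the hypothesis of Lemmas \ref{lem:r^2-Deltau} and \ref{lem:r^2abla-u^2}.

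Next I would derive the scalar-curvature identity by unwinding the defining equation $(-\Delta)e^{\frac{n-2}{2}u}=\frac{n-2}{4(n-1)}R_g e^{\frac{n+2}{2}u}$ from \eqref{Q_g^2k}. Expanding the Laplacian on the left gives
\begin{equation*}
\Delta e^{\frac{n-2}{2}u}=e^{\frac{n-2}{2}u}\Bigl(\tfrac{n-2}{2}\Delta u+\tfrac{(n-2)^2}{4}|\nabla u|^2\Bigr),
\end{equation*}
so dividing both sides of the defining equation by $e^{\frac{n-2}{2}u}$ yields the pointwise formula
\begin{equation*}
R_g e^{2u}=-2(n-1)\Delta u-(n-1)(n-2)|\nabla u|^2.
\end{equation*}

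Now I would average this identity over $\partial B_r(0)$ and multiply by $r^2$:
\begin{equation*}
r^2\fint_{\partial B_r(0)}R_g e^{2u}\,\ud\sigma
= 2(n-1)\cdot r^2\fint_{\partial B_r(0)}(-\Delta u)\,\ud\sigma
-(n-1)(n-2)\cdot r^2\fint_{\partial B_r(0)}|\nabla u|^2\,\ud\sigma.
\end{equation*}
By Lemma \ref{lem:r^2-Deltau} the first term tends to $2(n-1)(n-2)\alpha_0$, and by Lemma \ref{lem:r^2abla-u^2} the second term tends to $(n-1)(n-2)\alpha_0^2$. Subtracting gives the claimed limit $(n-1)(n-2)\alpha_0(2-\alpha_0)$, completing the proof.

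There is essentially no technical obstacle here, since both of the potential-theoretic limits are packaged as preceding lemmas; the only thing to be careful about is the algebraic bookkeeping of the conformal transformation formula for the scalar curvature and checking that the non-negativity hypotheses trigger the normal-metric reduction that those two lemmas require.
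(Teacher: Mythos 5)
Your proposal is correct and follows essentially the same route as the paper: establish that $g$ is normal via Theorem \ref{thm: int-Q^n-between 0,1} and Lemma \ref{lem: normal metric}, write $R_g e^{2u} = -2(n-1)\Delta u - (n-1)(n-2)|\nabla u|^2$, and apply Lemmas \ref{lem:r^2-Deltau} and \ref{lem:r^2abla-u^2} to the spherical average; the only difference is that you derive the conformal scalar-curvature formula explicitly, whereas the paper simply recalls it.
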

	\begin{proof}
		With the help of Theorem \ref{thm: int-Q^n-between 0,1} and Lemma \ref{lem: normal metric}, the  conformal metric $g$  is normal and has finite total $Q^{(n)}_g$-curvature. 
		Recall that 
		$$R_ge^{2u}=-2(n-1)\left(\Delta u+\frac{n-2}{2}|\nabla u|^2\right).$$
		Using Lemma \ref{lem:r^2-Deltau} and Lemma \ref{lem:r^2abla-u^2}, one has 
		$$r^2\fint_{\partial B_{r}(0)}R_{g}e^{2u}\ud \sigma\to (n-1)(n-2)\alpha_0(2-\alpha_0)$$
		as $r\to\infty.$
		Thus, we finish the proof.
	\end{proof}

	\begin{lemma}\label{lem:|y|^k/|x-y|^kud nu(y)to0}
		For any non-negative and  integral measure $\theta$ on $\mr^n$ with integer $n\geq 3$ and constant $0<p\leq n-2$,  one has  
		$$\fint_{\partial B_{r}(0)}\int_{\mr^n}\frac{|y|^p}{|x-y|^p}\ud\theta(y)\ud\sigma\to 0, \quad \mathrm{as}\; r\to\infty.$$
	\end{lemma}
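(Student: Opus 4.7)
The plan is to split the inner integral over $\mr^n$ into a near-origin piece and a far-from-origin piece, and control the two pieces by different mechanisms. The key observation is that the integrand $|y|^p/|x-y|^p$ behaves very differently depending on where $y$ lives relative to $x$, and the hypothesis $0 < p \leq n-2$ is precisely what enables Lemma \ref{lem: mean value} to be applied to the tail piece.

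First I would fix an arbitrary $\epsilon > 0$ and choose $R > 0$ large enough that
\[
\int_{\mr^n \setminus B_R(0)} \ud\theta(y) < \epsilon,
\]
which is possible since $\theta$ is a finite measure on $\mr^n$. Then I would write
\[
\int_{\mr^n}\frac{|y|^p}{|x-y|^p}\ud\theta(y) = \int_{B_R(0)}\frac{|y|^p}{|x-y|^p}\ud\theta(y) + \int_{\mr^n \setminus B_R(0)}\frac{|y|^p}{|x-y|^p}\ud\theta(y).
\]

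For the first (near-origin) piece, for any $x \in \partial B_r(0)$ with $r > 2R$ one has $|x-y| \geq r - R \geq r/2$ and $|y| \leq R$, so
\[
\int_{B_R(0)}\frac{|y|^p}{|x-y|^p}\ud\theta(y) \leq \frac{R^p}{(r/2)^p}\,\theta(\mr^n),
\]
and this uniform bound in $x$ tends to $0$ as $r \to \infty$. Thus the spherical average of this piece goes to $0$.

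For the second (tail) piece, I would apply Fubini's theorem to exchange the order of integration and then invoke Lemma \ref{lem: mean value}, which gives, under the assumption $0 < p \leq n-2$, the bound $\fint_{\partial B_r(0)}(|y|/|x-y|)^p \ud\sigma(x) \leq 1$ for every $y \in \mr^n$. This yields
\[
\fint_{\partial B_r(0)}\int_{\mr^n \setminus B_R(0)}\frac{|y|^p}{|x-y|^p}\ud\theta(y)\ud\sigma(x) \leq \int_{\mr^n \setminus B_R(0)}\ud\theta(y) < \epsilon.
\]
Combining the two estimates, $\limsup_{r \to \infty}\fint_{\partial B_r(0)}\int_{\mr^n}\frac{|y|^p}{|x-y|^p}\ud\theta(y)\ud\sigma \leq \epsilon$, and letting $\epsilon \to 0$ finishes the proof. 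There is no real obstacle here; the only thing to be careful about is that the threshold $p \leq n-2$ is exactly what Lemma \ref{lem: mean value} requires, so no additional integrability in $x$ is needed.
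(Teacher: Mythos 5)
Your proof is correct and follows essentially the same route as the paper: split $\theta$ into a compactly supported piece near the origin and a small tail, control the near piece by the uniform pointwise bound (the paper invokes dominated convergence instead of your explicit $R^p/(r/2)^p$ estimate, a cosmetic difference), and control the tail piece via Fubini together with Lemma~\ref{lem: mean value}.
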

	\begin{proof}
		For any $\epsilon$, there exists $R_\epsilon>0$ such that 
		$$\int_{\mr^n\backslash B_{R_\epsilon}(0)}\ud\theta(y)<\epsilon.$$
		
		Therefore, for the fixed $R_\epsilon$, we decompose the integral into two parts:
		\begin{align*}
			&\fint_{\partial B_{r}(0)}\int_{\mr^n}\frac{|y|^p}{|x-y|^p}\ud\theta(y)\ud\sigma\\
			=&\fint_{\partial B_{r}(0)}\int_{B_{R_\epsilon}(0)}\frac{|y|^p}{|x-y|^p}\ud\theta(y)\ud\sigma\\
			+&\fint_{\partial B_{r}(0)}\int_{\mr^n\backslash B_{R_\epsilon}(0)}\frac{|y|^p}{|x-y|^p}\ud\theta(y)\ud\sigma
		\end{align*}
		On one hand, 	by the dominated convergence theorem, the first term on the right yields that 
		$$\int_{B_{R_\epsilon}(0)}\frac{|y|^p}{|x-y|^p}\ud\theta(y)\to 0, \quad  \mathrm{as}\; |x|\to\infty.$$
		
		On the other hand, using Fubini's theorem and Lemma \ref{lem: mean value}, the second term on the right behalves like:
		\begin{align*}
			&\fint_{\partial B_{r}(0)}\int_{\mr^n\backslash B_{R_\epsilon}(0)}\frac{|y|^p}{|x-y|^p}\ud\theta(y)\ud\sigma\\
			\leq &\int_{\mr^n\backslash B_{R_\epsilon}(0)}\fint_{\partial B_{r}(0)}\frac{|y|^p}{|x-y|^p}\ud\sigma\ud\theta(y)\\
			\leq &\int_{\mr^n\backslash B_{R_\epsilon}(0)}\ud\nu(y)<\epsilon.
		\end{align*}
		Since $\epsilon$ is arbitrary, we finish our proof.
	\end{proof}

	In order to get control on higher $Q$-curvature estimate, we need to control the higher order derivative. To do so, we follow the argument of \cite{Li 24 total Q-curvature}. First let us do the fourth derivative estimate:
	
	\begin{lemma}\label{lem:r^4Delta^2u}
		Suppose the normal metric $g=e^{2u}|dx|^2$ on $\mr^n$ with even dimension at least $6$ is given. Then, the spherical average of bi-Laplace of $u$ near infinity behaves like:
		$$r^4\fint_{\partial B_r(0)}\Delta^2u\ud \sigma\to 2(n-2)(n-4)\alpha_0, \quad \mathrm{as}\; r\to\infty.$$
	\end{lemma}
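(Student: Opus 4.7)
The plan is to exploit the explicit integral representation for $\Delta^2 u$ that follows from the normality of $g$. Writing $\ud\nu(y) := \tfrac{2}{(n-1)!|\mathbb{S}^n|} Q^{(n)}_g(y) e^{nu(y)} \ud y$, which is a finite signed measure of total mass $\alpha_0$, one has $u(x) = \int_{\mathbb{R}^n} \log(|y|/|x-y|) \ud\nu(y) + C$, and differentiating twice under the integral sign exactly as in the derivation of \eqref{equ-Delta^2u} yields
$$\Delta^2 u(x) = 2(n-2)(n-4) \int_{\mathbb{R}^n} \frac{\ud\nu(y)}{|x-y|^4}.$$

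Applying Fubini's theorem to interchange the spherical average with the $\nu$-integration, the claim reduces to showing
$$\int_{\mathbb{R}^n} \left( r^4 \fint_{\partial B_r(0)} \frac{\ud\sigma(x)}{|x-y|^4} \right) \ud\nu(y) \longrightarrow \alpha_0 = \int_{\mathbb{R}^n} \ud\nu.$$
For each fixed $y$, the elementary expansion $|x-y| = r(1 + O(|y|/r))$ uniform for $x \in \partial B_r(0)$ shows that the bracketed quantity tends to $1$ as $r \to \infty$. So what remains is to justify passage to the limit inside the $\nu$-integral.

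This is precisely where the hypothesis $n \geq 6$ enters: since $4 < n-1$, Lemma \ref{lem:int 1/|x-y|^p leq r^-p} applied with $p = 4$ supplies the $y$-uniform bound
$$r^4 \fint_{\partial B_r(0)} \frac{\ud\sigma(x)}{|x-y|^4} \leq C(n,4), \quad \text{for all } y \in \mathbb{R}^n, \; r > 0.$$
Combined with $|\nu|(\mathbb{R}^n) < \infty$, which is a consequence of the normality of $g$, the dominated convergence theorem delivers the desired limit, and multiplying by $2(n-2)(n-4)$ finishes the proof.

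The main (really, the only) obstacle is obtaining a uniform-in-$y$ dominating bound for the inner average, and Lemma \ref{lem:int 1/|x-y|^p leq r^-p} provides it under the dimensional constraint $p < n-1$; for $p = 4$ this is exactly $n \geq 6$, matching the hypothesis of the lemma. An essentially equivalent alternative is to split the $\nu$-integral into a piece over a large ball $B_{R_\epsilon}(0)$, where standard dominated convergence is trivial, and its complement, where the same uniform bound combines with the tail smallness $|\nu|(\mathbb{R}^n \setminus B_{R_\epsilon}(0)) < \epsilon$ to give an $\epsilon$-error, as in the argument of Lemma \ref{lem:|y|^k/|x-y|^kud nu(y)to0}.
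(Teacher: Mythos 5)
Your proof is correct and slightly more direct than the paper's. Both arguments begin from the representation $\Delta^2 u(x) = 2(n-2)(n-4)\int_{\mathbb{R}^n}|x-y|^{-4}\ud\nu(y)$, but the paper does not take the spherical average of $r^4/|x-y|^4$ head-on. Instead it expands $|x|^4/|x-y|^4 = \bigl(1 + (|y|^2 + 2y\cdot(x-y))/|x-y|^2\bigr)^2$ pointwise, obtains two-sided bounds $\alpha_0 \pm \int \bigl(10|y|^2/|x-y|^2 + 4|y|/|x-y| + 2|y|^4/|x-y|^4\bigr)\ud|\nu|(y)$, averages over the sphere, and then invokes Lemma \ref{lem:|y|^k/|x-y|^kud nu(y)to0} with $p=1,2,4$ to kill the error terms (this requires $p\leq n-2$, whence $n\geq 6$). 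You instead interchange the order of integration, observe the pointwise limit $r^4\fint_{\partial B_r(0)}|x-y|^{-4}\ud\sigma \to 1$ for each fixed $y$, and close with dominated convergence, using the uniform bound $C(n,4)$ from Lemma \ref{lem:int 1/|x-y|^p leq r^-p} (requiring $4<n-1$, again $n\geq 6$) as dominating function; the signed measure $\nu$ causes no trouble since DCT applies to $\nu^+$ and $\nu^-$ separately. The dimensional constraint is identical, the key lemma invoked is different (\ref{lem:int 1/|x-y|^p leq r^-p} versus \ref{lem:|y|^k/|x-y|^kud nu(y)to0}, though the latter itself reduces to a similar split-the-tail argument), and your route sidesteps the algebraic expansion entirely. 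The paper's expansion, while heavier here, runs in parallel with the proof of Lemma \ref{lem:r^4(Deltau)^2}, where the squared structure makes the expansion genuinely necessary; that is likely why the authors chose it.
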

	\begin{proof}
		Rewrite the equation \eqref{equ-Delta^2u} to have
		\begin{align*}
			&\frac{|x|^4\Delta^2u(x)}{2(n-2)(n-4)}=\int_{\mr^n}\frac{|x|^4}{|x-y|^4}\ud\nu(y)\\
			=&\int_{\mr^n}\left(\frac{|x-y+y|^2}{|x-y|^2}\right)^2\ud\nu(y)\\
			=&\int_{\mr^n}\left(1+\frac{|y|^2+2y\cdot(x-y)}{|x-y|^2}\right)^2\ud\nu(y)\\
			=&\int_{\mr^n}\left(1+\frac{(|y|^2+2y\cdot(x-y))^2}{|x-y|^4}+2\frac{|y|^2+2y\cdot(x-y)}{|x-y|^2}\right)\ud\nu(y).
		\end{align*}
		
		Now use the notation $|\nu|$ to denote the measure:
		$$\ud|\nu|(y):=\frac{2}{(n-1)!|\mathbb{S}^n|}|Q^{(n)}_g(y)|e^{nu(y)}\ud y.$$
		
		On one hand, one has 
		\begin{equation}\label{|x|^4Delta^2u upper bound}
			\frac{|x|^4\Delta^2u(x)}{2(n-2)(n-4)}\leq \alpha_0+\int_{\mr^n}\left(\frac{10|y|^2}{|x-y|^2}+\frac{4|y|}{|x-y|}+\frac{2|y|^4}{|x-y|^4}\right)\ud|\nu|(y).
		\end{equation}

		On the other hand, one will have
		\begin{equation}\label{|x|^4Delta^2u lower bound}
			\frac{|x|^4\Delta^2u(x)}{2(n-2)(n-4)}\geq \alpha_0-\int_{\mr^n}\left(\frac{10|y|^2}{|x-y|^2}+\frac{4|y|}{|x-y|}+\frac{2|y|^4}{|x-y|^4}\right)\ud|\nu|(y).
		\end{equation}
		Using \eqref{|x|^4Delta^2u upper bound}, \eqref{|x|^4Delta^2u lower bound} and Lemma \ref{lem:|y|^k/|x-y|^kud nu(y)to0}, one has 
		$$r^4\fint_{\partial B_r(0)}\Delta^2u\ud\sigma\to 2(n-2)(n-4)\alpha_0.$$
	\end{proof}

	\begin{lemma}\label{lem:r^4(Deltau)^2}
		Consider the normal metric $g=e^{2u}|dx|^2$ on $\mr^n$ with the even dimension at least $6$. Then, the average of square of Laplace of $u$ has the growth rate:
		$$r^4\fint_{\partial B_r(0)}(\Delta u)^2\ud\sigma \to  (n-2)^2\alpha_0^2,  \quad \mathrm{as}\; r\to\infty. $$
	\end{lemma}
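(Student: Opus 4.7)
The plan is to convert the desired spherical average into an integral representation using the normality of $g$, expand it around its limiting constant value, and then show the remainder vanishes in the spherical $L^2$-mean sense. The key identity is that, by \eqref{equ-Deltau}, one has
\[
\frac{|x|^{2}(-\Delta u)(x)}{n-2}=\int_{\mr^n}\frac{|x|^{2}}{|x-y|^{2}}\,\ud\nu(y)=\alpha_{0}+G(x),
\]
where
\[
G(x):=\int_{\mr^n}\frac{|y|^{2}+2(x-y)\cdot y}{|x-y|^{2}}\,\ud\nu(y).
\]
Squaring this identity gives $|x|^{4}(\Delta u)^{2}/(n-2)^{2}=\alpha_{0}^{2}+2\alpha_{0}G(x)+G(x)^{2}$, so that
\[
r^{4}\fint_{\partial B_r(0)}(\Delta u)^{2}\,\ud\sigma=(n-2)^{2}\Bigl(\alpha_{0}^{2}+2\alpha_{0}\fint_{\partial B_r(0)}G\,\ud\sigma+\fint_{\partial B_r(0)}G^{2}\,\ud\sigma\Bigr).
\]

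The first correction $\fint_{\partial B_r(0)}G\,\ud\sigma$ goes to $0$ as $r\to\infty$ by Lemma \ref{lem:r^2-Deltau} (which is precisely the statement that $\fint_{\partial B_r(0)}(\alpha_{0}+G)\,\ud\sigma\to\alpha_{0}$). So the crux of the proof is to show
\[
\fint_{\partial B_r(0)}G^{2}\,\ud\sigma\to 0,\qquad r\to\infty.
\]
To do this, I would bound the integrand $G(x)$ pointwise by $2\int\tfrac{|y|}{|x-y|}\,\ud|\nu|(y)+\int\tfrac{|y|^{2}}{|x-y|^{2}}\,\ud|\nu|(y)$, where $\ud|\nu|=\frac{2}{(n-1)!|\ms^n|}|Q_g^{(n)}|e^{nu}\ud y$ is a finite measure by Theorem \ref{thm: int-Q^n-between 0,1} combined with Lemma \ref{lem: normal metric}. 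Applying Cauchy--Schwarz in the measure $|\nu|$ yields
\[
G(x)^{2}\leq C\|\nu\|\left(\int_{\mr^n}\frac{|y|^{2}}{|x-y|^{2}}\,\ud|\nu|(y)+\int_{\mr^n}\frac{|y|^{4}}{|x-y|^{4}}\,\ud|\nu|(y)\right).
\]

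The final step is to take the spherical average, swap orders of integration by Fubini, and invoke Lemma \ref{lem:|y|^k/|x-y|^kud nu(y)to0} with $p=2$ and $p=4$. Both fall in the admissible range $0<p\leq n-2$ exactly because we assume $n\geq 6$; this hypothesis is precisely what makes the plan work. The main obstacle I anticipate is keeping track of the cross term $2(x-y)\cdot y$ in $G$, since it has mixed sign; the bound $|2(x-y)\cdot y|\leq 2|x-y||y|$ reduces it cleanly to the $|y|/|x-y|$ term handled above, so there is no serious analytic issue beyond careful bookkeeping.
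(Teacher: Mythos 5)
Your proposal is correct and follows essentially the same route as the paper: write $\frac{|x|^2(-\Delta u)}{n-2}=\alpha_0+G(x)$ with $G(x)=\int\frac{|y|^2+2y\cdot(x-y)}{|x-y|^2}\,\ud\nu(y)$, square, bound the error terms pointwise by Cauchy--Schwarz and elementary inequalities against kernels $|y|^p/|x-y|^p$ with $p\in\{1,2,4\}$, and conclude via Lemma \ref{lem:|y|^k/|x-y|^kud nu(y)to0} (using $n\geq 6$ to allow $p=4$). The only cosmetic difference is that you dispatch the linear-in-$G$ correction by citing Lemma \ref{lem:r^2-Deltau} directly, whereas the paper absorbs it into the same pointwise two-sided bound; the substance is identical.
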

	\begin{proof} We follow the proof mode of previous lemma: first we rewrite it as:
		\begin{align*}
			\frac{|x|^4(\Delta u(x))^2}{(n-2)^2}=&\left(\int_{\mr^n}\frac{|x|^2}{|x-y|^2}\ud\nu(y)\right)^2\\
			=&\left(\int_{\mr^n}\left(1+\frac{|y|^2+2y\cdot(x-y)}{|x-y|^2}\right)\ud\nu(y)\right)^2\\
			=&\alpha_0^2+2\alpha_0\int_{\mr^n}\frac{|y|^2+2y\cdot(x-y)}{|x-y|^2}\ud\nu(y)\\
			&+\left(\int_{\mr^n}\frac{|y|^2+2y\cdot(x-y)}{|x-y|^2}\ud\nu(y)\right)^2
		\end{align*}
		
		Cauchy and H\"older's inequalities provide us the following estimate:
		\begin{equation}\label{r^4Deltau^2 upper bound}
			\frac{|x|^4(\Delta u(x))^2}{(n-2)^2}\leq \alpha_0^2+C\alpha_0\int_{\mr^n}\left(\frac{|y|}{|x-y|}+\frac{|y|^2}{|x-y|^2}+\frac{|y|^4}{|x-y|^4}\right)\ud|\nu|(y).
		\end{equation}
		and 
		\begin{equation}\label{r^4Deltau^2 lower bound}
			\frac{|x|^4(\Delta u(x))^2}{(n-2)^2}\geq \alpha_0^2-C\alpha_0\int_{\mr^n}\left(\frac{|y|}{|x-y|}+\frac{|y|^2}{|x-y|^2}+\frac{|y|^4}{|x-y|^4}\right)\ud|\nu|(y).
		\end{equation}
		
		Combining the estimates \eqref{r^4Deltau^2 upper bound}, \eqref{r^4Deltau^2 lower bound} with Lemma \ref{lem:|y|^k/|x-y|^kud nu(y)to0}, one has 
		$$r^4\fint_{\partial B_r(0)}(\Delta u)^2\ud\sigma \to (n-2)^2\alpha_0^2.$$
		
	\end{proof}
	
	\begin{lemma}\label{lem:fint Q^4 lower limit}
		Let $g=e^{2u}|dx|^2$ on $\mr^n$  with $n\geq 6$  an even integer be a complete and conformal metric. Suppose that both n-order Q-curvature  $Q^{(n)}_g$ and scalar curvature $R_g$ are non-negative. Then, there holds
		\begin{eqnarray*}
			& \lim_{r\to\infty}\inf r^4\fint_{\partial B_r(0)}Q^{(4)}_ge^{4u}\ud\sigma \nonumber\\
			\geq & \frac{(n-4)^4}{16} \left(2-\alpha_0\right)\left(\frac{2(n-2)}{n-4}-\alpha_0\right)\left(\alpha_0+\frac{4}{n-4}\right)\alpha_0.
		\end{eqnarray*}
	\end{lemma}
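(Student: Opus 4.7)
The plan is to read off the lemma as a direct combination of the pointwise lower bound for $Q_g^{(4)}e^{4u}$ established in the proof of Theorem \ref{thm:positive Q_4} with the two spherical asymptotic formulas in Lemmas \ref{lem:r^4Delta^2u} and \ref{lem:r^4(Deltau)^2}. First I note that the hypotheses of Theorem \ref{thm: int-Q^n-between 0,1} are satisfied, so Lemma \ref{lem: normal metric} guarantees $g$ is a normal metric with finite total $Q^{(n)}_g$-curvature, and $0\leq \alpha_0\leq 1$. In particular, since $(-\Delta)^{n/2}u = Q^{(n)}_g e^{nu}\geq 0$, Lemma \ref{lem:WX trick} yields $(-\Delta)^i u\geq 0$ for all $1\leq i\leq \frac{n}{2}-1$; in the range $n\geq 6$ this includes $\Delta^2 u\geq 0$.

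Next, I would invoke the pointwise inequality \eqref{positive Q^4} from the proof of Theorem \ref{thm:positive Q_4}, namely
\begin{equation*}
\frac{4}{(n-4)^2}Q^{(4)}_g e^{4u}\geq (2-\alpha_0)\left(\frac{2(n-2)}{n-4}-\alpha_0\right)\left(\frac{(n-4)^2(\Delta u)^2}{4(n-2)^2}+\frac{\Delta^2 u}{2(n-2)}\right).
\end{equation*}
The prefactor $(2-\alpha_0)\bigl(\frac{2(n-2)}{n-4}-\alpha_0\bigr)$ is strictly positive since $0\leq \alpha_0\leq 1$, and both terms in the bracket are non-negative by the previous paragraph. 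Therefore I may average both sides over $\partial B_r(0)$ and multiply by $r^4$ without losing the inequality. The only task left is to control the right-hand side asymptotically.

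Finally, I would apply Lemma \ref{lem:r^4Delta^2u} to get $r^4\fint_{\partial B_r(0)}\Delta^2 u\,d\sigma\to 2(n-2)(n-4)\alpha_0$ and Lemma \ref{lem:r^4(Deltau)^2} to get $r^4\fint_{\partial B_r(0)}(\Delta u)^2\,d\sigma\to (n-2)^2\alpha_0^2$. Substituting these into the averaged inequality and passing to $\liminf$, the bracket on the right collapses to
\begin{equation*}
\frac{(n-4)^2\alpha_0^2}{4}+(n-4)\alpha_0=\frac{(n-4)\alpha_0}{4}\bigl((n-4)\alpha_0+4\bigr)=\frac{(n-4)^2\alpha_0}{4}\left(\alpha_0+\frac{4}{n-4}\right),
\end{equation*}
and multiplying through by $\frac{(n-4)^2}{4}$ produces precisely the stated constant $\frac{(n-4)^4}{16}(2-\alpha_0)\bigl(\frac{2(n-2)}{n-4}-\alpha_0\bigr)\bigl(\alpha_0+\frac{4}{n-4}\bigr)\alpha_0$.

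There is no genuine obstacle here: the heavy lifting has already been done in Theorem \ref{thm:positive Q_4} (the pointwise algebraic inequality) and in the two preceding asymptotic lemmas. The only item demanding care is bookkeeping of the algebraic factors to confirm that the limit of the right-hand side matches the claimed expression exactly; this is a one-line simplification once the two asymptotic limits are plugged in. If one worries about the use of $\liminf$ rather than $\lim$, it is justified because the inequality is pointwise and both terms in the bracket converge (not merely up to $\liminf$) after multiplication by $r^4$, so the product of limits on the right is an honest limit and yields a lower bound for the $\liminf$ on the left.
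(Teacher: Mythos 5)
Your proposal is correct and follows essentially the same route as the paper: establish normality via Theorem \ref{thm: int-Q^n-between 0,1} and Lemma \ref{lem: normal metric}, invoke the pointwise bound \eqref{positive Q^4} from the proof of Theorem \ref{thm:positive Q_4}, average over $\partial B_r(0)$, and substitute the asymptotic limits from Lemmas \ref{lem:r^4Delta^2u} and \ref{lem:r^4(Deltau)^2}. The extra remark that $\Delta^2 u\geq 0$ (via Lemma \ref{lem:WX trick}) is harmless but, as you yourself note, unnecessary since both averaged quantities converge as honest limits.
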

	\begin{proof}
		With the help of Theorem \ref{thm: int-Q^n-between 0,1} and Lemma \ref{lem: normal metric}, the  conformal metric $g$  is normal and has finite total $Q^{(n)}_g$-curvature. 
		Combing Lemmas \ref{lem:r^4Delta^2u}, \ref{lem:r^4(Deltau)^2} with \eqref{positive Q^4}, one has
		\begin{align*}
			&\frac{4}{(n-4)^2}\lim_{r\to\infty}\inf r^4\fint_{\partial B_r(0)}Q^{(4)}_ge^{4u}\ud\sigma\\
			\geq&(2-\alpha_0)(\frac{2(n-2)}{n-4}-\alpha_0)\lim_{r\to\infty}\inf r^4\fint_{\partial B_r(0)}\left(\frac{(n-4)^2}{4(n-2)^2}(\Delta u)^2+\frac{\Delta^2 u}{2(n-2)}\right)\ud\sigma\\
			=&(2-\alpha_0)(\frac{2(n-2)}{n-4}-\alpha_0)\frac{(n-4)^2}{4}(\alpha_0+\frac{4}{n-4})\alpha_0.
		\end{align*}
		Thus, we finish the proof.
	\end{proof}

	\vspace{1em}
	
	{\bf Proof of Theorem \ref{thm:Q^2k_g gap theorem} and Theorem \ref{thm:Q^2k_g gap theoremfor logr}:}
	
	\vspace{1em}
	
	Firstly, Theorem \ref{thm: int-Q^n-between 0,1} can be applied to show that $Q^{(n)}_ge^{nu}\in L^1(\mr^n)$ and $g$ is a normal metric. Moreover,  there holds
	$$0\leq \alpha_0\leq 1.$$
	Since $Q^{(n)}_g\geq 0$, if $\alpha_0=0$, one must have $Q^{(n)}_g\equiv 0$ and then $u$ must be a constant due to   the equation \eqref{normal-metric-def}.

	We firstly deal with the case $I_g>0$. If $I_g>0$, one has $\alpha_0<1$ due to Corollary \ref{cor:I_g-positive}.
	Next, we argue with contradiction. If $0<\alpha_0<1$, we claim that for $1\leq k\leq \min\{2,\frac{n-2}{2}\}$ and each small constant  $0<\epsilon<n-2k$, there holds
	$$\int_{B_r^g(0)}Q^{(2k)}_g\ud \mu_g \geq C_1r^{n-2k-\frac{\epsilon}{2}}, \quad \mathrm{for} \; r\gg1$$
	where $C_1$ is a positive constant.

	We have already shown that, in Theorem \ref{thm:positive Q_4}, that $Q^{(2k)}_g$ and $Q^{(2)}_g$ are both non-negative.
	Once again we can choose $z_r\in\partial B_r^g(0)$ such that $|z_r| = \min\{ |z|  | z \in \partial B_r^g(0)\}$ , therefore $  B_{|z_r|}(0) \subset B_r^g(0)$.  Since $g$ is complete, $|z_r|$ must tend to infinity if $r\to\infty.$
	Then, there holds
	\begin{align*}
		&\int_{B_r^g(0)}Q^{(2k)}_g\ud \mu_g \\
		\geq &\int_{B_{|z_r|}(0)}Q^{(2k)}_ge^{nu}\ud x \\
		= &\int^{|z_r|}_0\int_{\partial B_s(0)}Q^{(2k)}_ge^{nu}\ud\sigma\ud s.
	\end{align*}
	
	Since $Q^{(n)}_g\geq 0$,    Lemma 2.10 in \cite{Li 23 Q-curvature} yields that 
	$$u(x)\geq -\alpha_0\log |x|-C,\quad  \mathrm{for}\; |x|\geq 1$$
	
	Then, insert this lower bound into above estimate to get
	\begin{equation}\label{intB_z_r-Q^2k-lower-bound}
		\int_{B_r^g(0)}Q^{(2k)}_g\ud \mu_g\geq C\int^{|z_r|}_1s^{-(n-2k)\alpha_0}\int_{\partial B_s(0)}Q^{(2k)}_ge^{2ku}\ud\sigma\ud s.
	\end{equation}
	Make use of Lemma \ref{lem:R_g limit on sphere} for $k = 1$ and Lemma \ref{lem:fint Q^4 lower limit} for $ k = 2$, together with the assumption $0<\alpha_0<1$ on the estimate \eqref{intB_z_r-Q^2k-lower-bound}, to reach the following estimate:
	$$\int_{B_r^g(0)}Q^{(2k)}_g\ud \mu_g\geq C\int^{|z_r|}_1s^{-(n-2k)\alpha_0}s^{n-1-2k}\ud s=C\frac{|z_r|^{(n-2k)(1-\alpha_0)}-1}{(n-2k)(1-\alpha_0)}.$$
	
	Using the geodesic distance comparison identity \eqref{distance comparison identity},  one has
	$$\lim_{|x| \to \infty}\frac{\log d_g(x,0)}{\log|x|}=1-\alpha_0.$$
	Thus, for any $\delta > 0$,  there will be a $R_\delta > 0$ such that for all $r > R_\delta$, one has
	$$|z_r|^{1-\alpha_0+\delta}\geq d_g(z_r,0)=r.$$
	Thus, if we make a choice of  $\delta = \frac{(1-\alpha_0)\epsilon}{2(n-2k)-\epsilon}>0$, then we reach an inequality which contradicts with our assumption.  This contradiction shows that one must have $\alpha_0=0$. Thus,  we finish the proof of Theorem \ref{thm:Q^2k_g gap theorem}.

	Finally, we turn to  the proof of Theorem \ref{thm:Q^2k_g gap theoremfor logr}.
	Without assumption on $I_g$, we just need to rule out the case $\alpha_0=1$. In this situation, using \eqref{intB_z_r-Q^2k-lower-bound} and Lemmas \ref{lem:fint Q^4 lower limit}, \ref{lem:R_g limit on sphere} as before, one has
	$$\int_{B_r^g(0)}Q^{(2k)}_g\ud\mu_g\geq C\int^{|z_r|}_1s^{-1}\ud s=C\log|z_r|.$$
	Using the above estimate and \eqref{distance comparison identity}, one has
	$$\frac{\int_{B_r^g(0)}Q^{(2k)}_g\ud\mu_g}{\log r}\geq C\frac{\log|z_r|}{\log r}=C\frac{\log |z_r|}{\log d_g(0,z_r)}\to\infty,\quad  \mathrm{as}\; r\to\infty.$$
	which contradicts to our assumption. Thus, we finish the proof of Theorem \ref{thm:Q^2k_g gap theoremfor logr}.
	\qed

	\section{Some remarks on the vanishing isoperimetric ratio}\label{sec:vanishing I_g}
	
Combining Theorem \ref{thm: int-Q^n-between 0,1} with Theorem 1.2 in \cite{Li 24 total Q-curvature}, we conclude that equality holds on the right-hand side of \eqref{CVineqaulity for Q} if the scalar curvature is bounded by some positive constant from below. In this situation, applying Theorem \ref{thm:CQY-thm}, we find that the isoperimetric ratio $I_g$ must vanish.   
	
	Due to such observation, we  give the following geometric criterion for vanishing $I_g$.
	
	\begin{theorem}\label{thm:I_g vanishing}
		Let $g=e^{2u}|dx|^2$ be a complete and conformal metric on $\mr^n$ with an even integer  $n\geq4$. Suppose that scalar curvature $R_g$ is non-negative  outside a compact set  and the negative part of $Q^{(n)}_g$ is integrable over $(\mr^n,g)$. 
		For each integer  $1\leq k\leq \frac{n-2}{2}$, if $2kth$-order $Q^{(2k)}_g\geq C>0$ for some positive constant $C$ outside a compact subset, then the iso-perimetric ratio $I_g$ vanishes.
	\end{theorem}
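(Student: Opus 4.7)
The plan is to reduce the vanishing of $I_g$ to the equality $\alpha_0 = 1$, where $\alpha_0$ is the normalized total $Q^{(n)}_g$-curvature from \eqref{alpha_0}, and then to derive that equality by comparing matching upper and lower asymptotics for $\int_{B_r(0)} Q^{(2k)}_g e^{nu}\,\ud x$ as $r\to\infty$. Theorem \ref{thm: int-Q^n-between 0,1} provides $Q^{(n)}_g e^{nu} \in L^1(\mr^n)$ with $0 \leq \alpha_0 \leq 1$, and Lemma \ref{lem: normal metric} then shows that $g$ is a normal metric. Once $\alpha_0 = 1$ is established, Theorem \ref{thm:CQY-thm} gives
$$\lim_{r\to\infty}\frac{V_g(\partial B_r(0))^{\frac{n}{n-1}}}{c_n V_g(B_r(0))} = 1-\alpha_0 = 0,$$
and since $I_g$ is an infimum over all smooth bounded domains in $\mr^n$, testing with the Euclidean balls $B_r(0)$ forces $I_g = 0$.

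It remains to rule out $\alpha_0 < 1$. Assume for contradiction that $\alpha_0 < 1$. Since $g$ is normal, Lemma \ref{lem:barLf} yields $\bar u(s) = (-\alpha_0 + o(1))\log s$, which combined with Lemma \ref{lem:e^ku=ek bar u} produces the sphere-average asymptotic
$$\fint_{\partial B_s(0)} e^{nu}\,\ud\sigma = e^{n\bar u(s) + o(1)} = s^{-n\alpha_0 + o(1)}\quad \text{as } s\to\infty.$$
Integrating radially and using the hypothesis $Q^{(2k)}_g \geq C > 0$ outside some compact set $K$, one obtains, for every small $\epsilon > 0$ and all sufficiently large $r$, the lower bound
$$\int_{B_r(0)} Q^{(2k)}_g\,\ud\mu_g \geq C\int_{B_r(0)\setminus K} e^{nu}\,\ud x - C'_K \geq c_1\, r^{n(1-\alpha_0) - \epsilon}.$$

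On the other hand, Lemma \ref{lem:int_B_r Q^2k)p} applied with $p = 1$ (permissible since $2k \leq n-2 < n-1$), combined with the same asymptotic for $\bar u$, produces the upper bound
$$\int_{B_r(0)} |Q^{(2k)}_g|\,\ud\mu_g \leq C + C\int_1^r s^{n-2k-1}\,e^{(n-2k)\bar u(s)}\,\ud s \leq C_2\, r^{(n-2k)(1-\alpha_0)+\epsilon}.$$
Because $k \geq 1$ and $\alpha_0 < 1$, one can select $\epsilon < k(1-\alpha_0)$, which makes the lower-bound exponent $n(1-\alpha_0)-\epsilon$ strictly larger than the upper-bound exponent $(n-2k)(1-\alpha_0)+\epsilon$, so the lower bound dominates the upper bound as $r\to\infty$, a contradiction. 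The only delicate point is converting the logarithmic asymptotic of $\bar u(s)$ into the two-sided power bounds $s^{-n\alpha_0 \pm \epsilon}$ for $\fint_{\partial B_s(0)} e^{nu}\,\ud\sigma$; however, this is exactly the content of Lemma \ref{lem:e^ku=ek bar u}, so no essentially new tool is required and the remaining manipulations reduce to elementary integration.
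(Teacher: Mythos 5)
Your proof is correct, but it takes a genuinely different route from the paper's. The paper works directly with the \emph{Euclidean} spherical average $\fint_{\partial B_r(0)}Q^{(2k)}_g\,\ud\sigma$: it first shows (via Lemma~\ref{lem:e^ku=ek bar u} applied to $f=-Q^{(n)}_ge^{nu}$) that $\fint_{\partial B_r(0)}e^{-pu}\,\ud\sigma\leq Ce^{-p\bar u}$, then combines this with Lemma~\ref{lem:nabal^kvarphi} and a variant of Lemma~\ref{lem:equ nabla^k Lf} with negative exponent $q=-2k$ to obtain $\fint_{\partial B_r(0)}Q^{(2k)}_g\,\ud\sigma\leq Cr^{-2k}e^{-2k\bar u(r)}=Cr^{-2k(1-\alpha_0)+o(1)}$. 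Because the hypothesis $Q^{(2k)}_g\geq C>0$ off a compact set forces this average to stay bounded away from zero, they immediately read off $\alpha_0\geq 1$, and equality with the bound $\alpha_0\leq1$ from Theorem~\ref{thm: int-Q^n-between 0,1} gives $\alpha_0=1$, hence $I_g=0$ via Theorem~\ref{thm:CQY-thm}. You instead compare two $g$-weighted ball integrals: the lower bound $\int_{B_r(0)}Q^{(2k)}_g\,\ud\mu_g\gtrsim r^{n(1-\alpha_0)-\epsilon}$ coming from the pointwise lower bound on $Q^{(2k)}_g$ and the volume growth of $e^{nu}$, against the upper bound $\lesssim r^{(n-2k)(1-\alpha_0)+\epsilon}$ from Lemma~\ref{lem:int_B_r Q^2k)p}. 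The paper's argument is leaner, needing only a one-sided sphere-average estimate and no lower bound on $\int e^{nu}$, while yours needs both two-sided $L^1$ growth of $e^{nu}$ (easily supplied by Lemma~\ref{lem:e^ku=ek bar u} together with Jensen) and the ball-integral upper bound; on the other hand, your approach reuses Lemma~\ref{lem:int_B_r Q^2k)p} as a black box and avoids re-deriving the spherical-average estimate from scratch. Both arguments are built on the same foundations (normality from Lemma~\ref{lem: normal metric}, the asymptotics $\bar u=(-\alpha_0+o(1))\log r$ from Lemma~\ref{lem:barLf}, and the potential-derivative machinery), and your slightly more careful justification that $I_g=0$ follows from testing the infimum with Euclidean balls is welcome. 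One small presentational point: in the contradiction step, your choice $\epsilon<k(1-\alpha_0)$ covers the full range $0\leq\alpha_0<1$, including $\alpha_0=0$; it is worth noting explicitly that the argument does not require $\alpha_0>0$.
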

	\begin{proof}
		With the help of Theorem \ref{thm: int-Q^n-between 0,1}, we know that $Q^{(n)}_g$ is absolutely integrable and $g$ is a normal metric.
		For each given $p>0$,   apply  Lemma \ref{lem:e^ku=ek bar u} with  $f=-Q^{(n)}_ge^{nu}$  to get the estimate:
		\begin{equation}\label{int e^-pu}
			\fint_{\partial B_r(0)}e^{-pu}\leq Ce^{-p\bar u}.
		\end{equation}
		The same argument for Lemma \ref{lem:int_B_r Q^2k)p} , with the help of  \ref{int e^-pu}, for each natural number $k\leq \frac{n-2}{2}$ and sufficiently large $r$, shows that
		\begin{align*}
			\fint_{\partial B_r(0)}Q^{(2k)}_g\ud\sigma\leq &\fint_{\partial B_r(0)} e^{-\frac{n+2k}{2}u}|\Delta^ke^{\frac{n-2k}{2}u}|\ud\sigma\\
			\leq &C\fint e^{-2ku}\sum^{2k}_{i=1}|\nabla^i u|^{\frac{2k}{i}}\ud\sigma\\
			\leq &Cr^{-2k}e^{-2k\bar u(r)}.
		\end{align*}
		If $Q^{(2k)}_g\geq C>0$ outside a compact set,  combine  Lemma  \ref{lem:barLf} and this desired estimate to conclude that
		$$\alpha_0\geq 1.$$
		On the other hand the equation \eqref{CVineqaulity for Q} already shows $\alpha_0 \leq 1$. Thus we have to have:
		$$\alpha_0=1.$$
		Immediately, Theorem \ref{thm:CQY-thm} yields that $I_g=0.$
		
		Thus, we completes our argument.
	\end{proof}

\end{document}